\DeclareMathAlphabet{\mathpzc}{OT1}{pzc}{m}{it}
\newtheorem{theorem}{Theorem}[section]
\newtheorem{lem}[theorem]{Lemma}
\newtheorem{corollary}[theorem]{Corollary}
\numberwithin{equation}{section}
\theoremstyle{definition}
\newtheorem{defn}[theorem]{Definition}
\newtheorem{rem}[theorem]{Remark}
\begin{document}

\title[Grand variable Herz-Morrey type Besove spaces] {Grand variable Herz-Morrey type Besove spaces and Triebel-Lizorkin spaces }

\author[Sultan et.al]{ Mehvish Sultan$^{1}$ and  Babar Sultan$^{2^*}$ }

\address{$^{1}$ Department of Mathematics , Capital University Of Science and Technology, Islamabad, Pakistan.}
\email{mehvishsultanbaz@gmail.com}%

\address{$^{2^*}$ Department of Mathematics, Quaid-I-Azam University, Islamabad 45320, Pakistan.}
\email{babarsultan40@yahoo.com}%

\subjclass[2010]{Primary 46E35; Secondary 42B25, 42B35.}
\keywords{Grand variable Herz-Morrry space, Besov space, Triebel Lizorkin space, maximal operator.}
\date{\today}
\dedicatory{}%
\begin{abstract}
In the article, the boundedness of vector-valued sublinear operators in grand variable Herz-Morrey spaces $M \dot{K}_{ \lambda, p(\cdot)}^{\eta (\cdot), q), \theta}\left(\mathbb{R}^{n}\right)$ are obtained. Then grand variable Herz-Morrey type Besov and Triebel-Lizorkin spaces are defined. We will also prove the equivalent quasi-norms by Peetre's maximal operators in these spaces.
\end{abstract}
\maketitle

\section{ Introduction}
In variable exponent spaces, the Hardy-Littlewood maximal operator's boundedness is crucial. For instance, it is well known that if the Hardy-Littlewood maximal operator is bounded in variable exponent Lebesgue space, then numerous conclusions from classical harmonic analysis and function theory also apply for the variable exponent case; see \cite{ref4,ref5,ref6,ref26}. Moreover, a variety of variable exponent spaces are presented, including: Bessel potential spaces, Besov and Trieble-Lizorkin spaces, Hardy spaces, Herz spaces, grand variable Herz spaces, grand variable weighted Herz spaces, Herz-Morrey spaces, grand variable Herz-Morrey spaces, Morrey spaces, Morrey type Besov and Trieble-Lizorkin spaces, Trieble-Lizorkin-Morrey spaces, Trieble-Lizorkin-Morrey spaces, and so forth; see \cite{ref1, ref2,ref3,ref7,ref10,ref11,ref13,ref16,ref18,ref19,ref19,ref27,ref29,ref18,ref19,ref32,ref33,ref41,ref42,someaims,communication,forum,fractal,hardyaims,fractionalaims} and references therein.  As you can see from \cite{ref12,ref21,ref22,ref23,ref24}, numerous conclusions about the boundedness of sublinear operators in these spaces have been established.

A sublinear operator $T$ satisfy the size condition
$$
|T g(x)| \leqslant C \int_{\mathbb{R}^{n}}|x-y|^{-n}|g(y)| \mathrm{d} y
$$
for all $g \in L_{\text {loc }}^{1}\left(\mathbb{R}^{n}\right)$ with compact support and a.e. $x \notin \operatorname{supp} g$. Then, $T$ is bounded on the grand variable Herz-Morrey spaces and the homogeneous and non-homogeneous Herz spaces (see the monographs \cite{ref25,notes}). L. Tang and D. Yang then expand these conclusions for the weighted vector-valued situation in \cite{ref35}. Herz type Besov and Triebel-Lizorkin spaces with variable exponent $\dot{K}_{p(\cdot)}^{\eta , q}\left(\mathbb{R}^{n}\right)$ were introduced by C. Shi and the second author in \cite{ref33}. $K_{p(\cdot)}^{\eta , q}\left(\mathbb{R}^{n}\right) and B_{\kappa }^{s}$ $\dot{K}_{p(\cdot)}^{\eta , q}\left(\mathbb{R}^{n}\right) and B_{\kappa }^{s}$ $K_{p(\cdot)}^{\eta , q}\left(\mathbb{R}^{n}\right) F_{\kappa }^{s}$ and $F_{\kappa }^{s}$ and their corresponding quasi-norms. For the Besov and Triebel-Lizorkin spaces of constant exponent Herz type, we direct the reader to \cite{ref37,ref38,ref39,ref40}. M. Izuki \cite{ref14} used variable exponent $M \dot{K}_{q, p(\cdot)}^{\eta , \lambda}\left(\mathbb{R}^{n}\right)$ to get the vector-valued boundedness for certain sublinear operators that meet the size requirement on Herz-Morrey spaces. Sultan et al. \cite{axioms} presented the concept of grand  variable Herz-Morrey spaces; see \cite{desscience,ankara,marcinkiewiczaims} for additional findings on these spaces. The boundedness of vector-valued Hardy-Littlewood maximal operators in Herz spaces with variable exponents was established by the authors in \cite{ref8}. Peetre's maximal operators were used to characterize Herz type Besov and Triebel-Lizorkin spaces with variable exponents.

The current research examines the boundedness of vector-valued Hardy-Littlewood maximal operators on grand variable Herz-Morrey type Besov and Triebel-Lizorkin spaces, drawing inspiration from the aforementioned publications. The paper is organised as follows. We provide some conventions in the remainder of the section. Our primary findings, which are a generalization of related findings in \cite{ref14} and \cite{ref8,besov}, will be presented in Section 2. We provide evidence for our findings in Section 3.

\section {Main results}

The $n$-dimensional real Euclidean space is denoted as $\mathbb{R}^n$, as is customary. Let $\mathcal{A}$ be a measurable subset of $\mathbb{R}^n$.  Throughout this paper, $C, c$ are always positive constants which may vary from line to line. For a measurable set $S$, $|S|$ denote its Lebesgue measure and ${\bf{1}}_S$ denote its characteristic function. We write $u \leqslant v$, if $u\leq cv$, and if $u\leqslant v$ and $v\leqslant u$, then $u\sim v$. We assume a measurable function on $\mathbb{R}^n$ by the variable exponents. For every $l \in \mathbb{Z},$ we denote $B_t =B(0,2^t)=\{x \in \mathbb{R}^n : |x| < 2^t\}$. After deducting $B_{t-1}$ from $R_t$, ${\bf{1}}_{R_t}={\bf{1}} _ t$. By $\mathcal{P}(\mathbb{R}^n)$  we denote the subset of variable exponents with range $[1,\infty]$. Let ${q}_-:=\operatorname{ess} \inf \limits _{y\in \mathcal{A}}\;q(y)> 1$ and ${q}_+:=\operatorname{ess} \sup \limits _{y\in \mathcal{A}}\;q(y)< \infty$, then we have
	\begin{equation}\label{eq1}
	1\leq q_-(\mathcal{A})\leq q(y) \leq q_+(\mathcal{A})<\infty.
	\end{equation} 
 The Notation $B$ is the ball such that $B(z,r):= \{y \in \mathcal{A} : |z-y| <r\}$.
Now variable  Lebesgue space  $L^{q(\cdot)}(\mathcal{A})$ is given as
		$$L^{p(\cdot)}(\mathcal{A})=\left\{ {g}\; \textit{is measurable}: \int_{\mathcal{A}}\left({|{g}(z)| \over \lambda}\right)^{q(z)}dz < \infty\; \textit{where}\; \lambda \; \textit{is a constant} \right\}.$$ 
  Lebesgue space is equipped with the norm

  $$\|{g}\|_{L^{q(\cdot)}(\mathcal{A})}=\inf\left\{\lambda >0: \int_{\mathcal{A}}\left({|{g}(z)| \over \lambda}\right)^{q(z)}dz\leq 1\right\}.$$
 The Hardy-Littlewood maximal operator ${M}$ for ${g} \in L_{\mathrm{loc}}^1(\mathcal{A})$ is defined as 
$${M}{g}(z):= \sup \limits _{0<r} {1 \over r^n} \int \limits _{B(z,r)} |{g}(z)|d z \;\;\;\; (z \in \mathcal{A}).$$

 The notion of $\mathcal{B}(\mathcal{A})$ denote  the collection of $q(\cdot)\in \mathcal{P}({\mathcal{A}})$ such that  ${M}$ is bounded on $ L^{q(\cdot)}(\mathcal{A})$. Let $x,y \in \mathcal{A}$ with $|x-y|\leq \frac{1}{2}$ and $q: \mathcal{A} \mapsto (0,\infty)$.  Now  $\mathrm{log}$-H\"older continuity condition (or Dini-Lipschitz condition) is given as

\begin{equation}\label{r5}
\frac{C}{-\ln|x-y|} \geq |q(x)-q(y)|,
\end{equation}
where $C$ is called $\mathrm{log}$-H\"older continuity constant.

  Let log-Dini-Lipschitz constant (or decay constant) $C_{\infty}>0$,  $q(\cdot)$ satisfiy  the decay condition if   $\lim\limits_{|x|\to \infty} q(x)=q_{\infty}:=q(\infty)$  such that

\begin{equation}\label{r6}
\frac{C_{\infty}}{\ln(e+|h|)}\geq |q(h)-q_\infty|.
\end{equation}
Let $C_0>0$,  $q(\cdot)$ satisfy the   $\log$ H\"older continuity condition at $0$  for  $|h|\leq\frac{1}{2}$, such that
\begin{equation}\label{r7}
\frac{C_0}{\ln|h|}\geq |q(h)-q(0)|.
\end{equation}

	 $\mathcal{P}^{\log} = \mathcal{P}^{\log}(\mathcal{A})$  consists of all functions $q(\cdot) \in \mathcal{P}(\mathcal{A})$ satisfying \eqref{eq1}  and \eqref{r5}.
	
	  $\mathcal{P}_\infty (\mathcal{A})$ (resp.  $\mathcal{P}_{0,\infty} (\mathcal{A})$) is the subset of $\mathcal{P}(\mathcal{A})$  consisting of functions which  satisfy  condition \eqref{r6} (resp.  both conditions \eqref{r6} and \eqref{r7}). The set of positive integers is denoted by $\mathbb{Z}_{+}$. For each $t_0 \in \mathbb{Z}, {\bf{1}}_{A_{t_0}}={\bf{1}}_{t_0}$; for $t_0 \in \mathbb{Z}_{+}$, ${\bf{1}}_{B_{0}}=\tilde{{\bf{1}}}_{0}$; and so on. ${\bf{1}}_{A_{t_0}}$ denote the characteristic function of $A_{t_0}$. By  $a \leqslant b$ we denote $a\leq C b$.

  Moreover, we define $\mathcal{P}^{0}\left(\mathbb{R}^{n}\right)$ to be the set of measurable functions $p$ on $\mathbb{R}^{n}$ with the range in $(0, \infty)$ such that $p_- >0$ and $p_+<\infty$. Given $p(\cdot) \in\mathcal{P}^{0}\left(\mathbb{R}^{n}\right)$, one can define the space $L^{p(\cdot)}\left(\mathbb{R}^{n}\right)$. This is equivalent to defining it to be the set of all functions $g$ such that
$$
\left|g\right|^{p_{0}} \in L^{p(\cdot) / p_{0}}\left(\mathbb{R}^{n}\right)
$$
where
$$
0<p_{0}<p_{-}, \quad \frac{p(\cdot)}{p_{0}} \in \mathcal{P}\left(\mathbb{R}^{n}\right).
$$
Then the quasi-norm is given as 
$$
\|g\|_{L^{p(\cdot)}\left(\mathbb{R}^{n}\right)}=\left\||g|^{p_{0}}\right\|_{L^{p(\cdot) / p_{0}}\left(\mathbb{R}^{n}\right)}^{1/ p_{0}}.
$$

\begin{lem}\cite{ref20}

Let $f$ belong to the Lebesgue space $L^{p(\cdot)}(\mathbb{R}^{n})$ and $g$ belong to $L^{p'(\cdot)}(\mathbb{R}^{n})$, where $p(\cdot)\in \mathcal{P}(\mathbb{R}^{n})$. Then, the product function $fg$ is integrable over $\mathbb{R}^{n}$, and the following inequality holds:

$$\int_{\mathbb{R}^{n}} \bigl\vert f(x)g(x) \bigr\vert \,dx \leq r_{p} \Vert f \Vert _{{p(\cdot )}} \Vert g \Vert _{{p'(\cdot)}},$$
where $r_{p}$ is defined as $r_{p}=1+\frac{1}{p^{-}} -\frac{1}{p^{+}}$.
\end{lem}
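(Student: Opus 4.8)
The plan is to derive this generalized Hölder inequality from the pointwise Young inequality together with the relationship between the Luxemburg norm $\|\cdot\|_{p(\cdot)}$ and the modular $\rho_{p}(h) := \int_{\mathbb{R}^{n}} |h(x)|^{p(x)}\,dx$. First I would dispose of the degenerate cases: if $\|f\|_{p(\cdot)}=0$ or $\|g\|_{p'(\cdot)}=0$ then $f$ or $g$ vanishes almost everywhere and both sides are zero, while if either norm is infinite the asserted bound is vacuous. Hence I may assume $0<\|f\|_{p(\cdot)},\|g\|_{p'(\cdot)}<\infty$ and normalize by putting $F=f/\|f\|_{p(\cdot)}$ and $G=g/\|g\|_{p'(\cdot)}$, so that $\|F\|_{p(\cdot)}=\|G\|_{p'(\cdot)}=1$. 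The key structural fact I would invoke is the unit ball property of the Luxemburg norm, namely that $\|h\|_{p(\cdot)}\le 1$ forces $\rho_{p}(h)\le 1$; applied to $F$ and $G$ this yields $\rho_{p}(F)\le 1$ and $\rho_{p'}(G)\le 1$.

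Next I would split $\mathbb{R}^{n}$ into the set $\Omega_{*}=\{x:1<p(x)<\infty\}$ together with the boundary sets $\{p=1\}$ and $\{p=\infty\}$. On $\Omega_{*}$ the exponents $p(x)$ and $p'(x)$ are genuinely conjugate, so Young's inequality applies pointwise in the form
$$
|F(x)G(x)|\le \frac{|F(x)|^{p(x)}}{p(x)}+\frac{|G(x)|^{p'(x)}}{p'(x)}.
$$
On the degenerate sets, where one exponent equals $1$ and its conjugate equals $\infty$, I would instead bound $|FG|$ directly through the $L^{1}$–$L^{\infty}$ pairing. Integrating over $\mathbb{R}^{n}$ and using $1/p(x)\le 1/p_{-}$ and $1/p'(x)\le 1/(p')_{-}=1-1/p_{+}$ gives
$$
\int_{\mathbb{R}^{n}}|FG|\,dx\le \frac{1}{p_{-}}\,\rho_{p}(F)+\frac{1}{(p')_{-}}\,\rho_{p'}(G)\le \frac{1}{p_{-}}+\Bigl(1-\frac{1}{p_{+}}\Bigr)=r_{p}.
$$
Rescaling by $\|f\|_{p(\cdot)}\|g\|_{p'(\cdot)}$ then yields the stated inequality and, in particular, shows that $fg\in L^{1}(\mathbb{R}^{n})$.

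The step I expect to require the most care is pinning down the precise constant together with the treatment of the sets where $p$ or $p'$ takes the values $1$ or $\infty$. One must verify the identity $1/(p')_{-}=1-1/p_{+}$, which is exactly the source of the value $r_{p}=1+\tfrac{1}{p_{-}}-\tfrac{1}{p_{+}}$, and confirm that Young's inequality is correctly replaced by the duality estimate on the degenerate regions so that no extra constant is lost. Everything else is a routine normalization-and-integration argument once the unit ball property relating the norm and the modular is in hand.
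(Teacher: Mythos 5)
The paper gives no proof of this lemma; it is quoted verbatim from Kov\'a\v{c}ik--R\'akosn\'{\i}k \cite{ref20}, and your argument is precisely the classical proof of their Theorem 2.1: normalize to unit Luxemburg norm, use the unit ball property of the modular, apply Young's inequality pointwise on $\{1<p(x)<\infty\}$ and the $L^{1}$--$L^{\infty}$ pairing on the degenerate sets, and identify $1/(p')_{-}=1-1/p_{+}$ to obtain $r_{p}$. This is correct and is essentially the same (indeed the only standard) route; the only part you leave compressed is the bookkeeping showing that the contributions of $\{p=1\}$ and $\{p=\infty\}$ are absorbed into $1/p_{-}$ and $1/(p')_{-}$ (which works because those sets having positive measure forces $p_{-}=1$, resp.\ $(p')_{-}=1$), and you correctly flag this as the delicate step.
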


\begin{lem}\cite{uribe1} Let $\tilde{q}(\cdot)$ be a variable exponent defined as $\frac{1}{p(z)}-\frac{1}{q}=\frac{1}{\tilde{q}(z)} $ where $\left(z \in \mathbb{R}^{n}\right)$. Then for all measurable functions $f$ and $g$ we have

$$
\|f g\|_{{p(\cdot)}} \leq C\left\| g\right\|_{{\bar{q}(\cdot)}}\| g\|_{{q}}.
$$
\end{lem}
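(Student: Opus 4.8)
The plan is to recognize this as the generalized H\"older inequality for variable Lebesgue spaces, where the pointwise exponent relation $\frac{1}{p(z)}=\frac{1}{\tilde q(z)}+\frac{1}{q}$ plays the role of the usual conjugacy relation. First I would record the two pointwise consequences $p(z)\le \tilde q(z)$ and $p(z)\le q$, which follow immediately since each reciprocal on the right-hand side is nonnegative; these guarantee $\frac{p(z)}{\tilde q(z)}\le 1$ and $\frac{p(z)}{q}\le 1$. The degenerate cases in which $\|f\|_{\tilde q(\cdot)}$ or $\|g\|_{q}$ is zero or infinite are disposed of at once, so from now on I may assume both quantities are finite and strictly positive.

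The core step is a pointwise Young inequality. After normalizing $F=f/\|f\|_{\tilde q(\cdot)}$ and $G=g/\|g\|_{q}$ so that $\int_{\mathbb{R}^{n}}|F(z)|^{\tilde q(z)}\,dz\le 1$ and $\int_{\mathbb{R}^{n}}|G(z)|^{q}\,dz\le 1$, I would fix $z$ and apply Young's inequality $ab\le \frac{a^{s}}{s}+\frac{b^{s'}}{s'}$ with the conjugate pair $s=\tilde q(z)/p(z)$ and $s'=q/p(z)$; the identity $\frac{1}{s}+\frac{1}{s'}=p(z)\bigl(\frac{1}{\tilde q(z)}+\frac{1}{q}\bigr)=1$ is exactly the hypothesis. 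Taking $a=|F(z)|^{p(z)}$ and $b=|G(z)|^{p(z)}$ yields
$$|F(z)G(z)|^{p(z)}\le \frac{p(z)}{\tilde q(z)}\,|F(z)|^{\tilde q(z)}+\frac{p(z)}{q}\,|G(z)|^{q}\le |F(z)|^{\tilde q(z)}+|G(z)|^{q},$$
where the last inequality uses $\frac{p(z)}{\tilde q(z)}\le 1$ and $\frac{p(z)}{q}\le 1$. Integrating over $\mathbb{R}^{n}$ and invoking the two normalizations gives the modular estimate $\int_{\mathbb{R}^{n}}|F(z)G(z)|^{p(z)}\,dz\le 2$.

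It remains to convert this modular bound into a norm bound and to undo the normalization. Using $p_{+}<\infty$, I would observe that whenever the modular of $h$ is at most a constant $C_0\ge 1$ one has $\|h\|_{p(\cdot)}\le C_0^{1/p_-}$: choosing $\lambda=C_0^{1/p_-}\ge 1$ and estimating $\lambda^{p(z)}\ge \lambda^{p_-}=C_0$ in the Luxemburg modular shows $\int_{\mathbb{R}^{n}}|h(z)/\lambda|^{p(z)}\,dz\le 1$. Applied with $C_0=2$ this gives $\|FG\|_{p(\cdot)}\le 2^{1/p_-}$, and the homogeneity of the norm then returns
$$\|fg\|_{p(\cdot)}=\|f\|_{\tilde q(\cdot)}\,\|g\|_{q}\,\|FG\|_{p(\cdot)}\le 2^{1/p_-}\,\|f\|_{\tilde q(\cdot)}\,\|g\|_{q},$$
which is the asserted inequality with $C=2^{1/p_-}$. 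I expect the only delicate point to be the modular-to-norm passage, where in general one must track whether the modular exceeds or falls below $1$ and use the correct exponent ($p_-$ versus $p_+$); the clean choice $C_0=2\ge 1$ above sidesteps that case distinction, so no genuine obstacle remains beyond routine bookkeeping.
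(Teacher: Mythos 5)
The paper does not prove this lemma at all: it is quoted directly from the cited monograph of Cruz--Uribe and Fiorenza (and, as stated, contains a typo --- the right-hand side should read $\|f\|_{\tilde q(\cdot)}\|g\|_{q}$ rather than $\|g\|_{\bar q(\cdot)}\|g\|_{q}$, which you have correctly silently repaired). Your argument is, in substance, the standard proof of the generalized H\"older inequality for variable exponents from that reference, and it is correct: the conjugacy computation $\frac{1}{s}+\frac{1}{s'}=p(z)\bigl(\frac{1}{\tilde q(z)}+\frac{1}{q}\bigr)=1$, the pointwise Young step, the modular bound by $2$, and the modular-to-norm passage via $\lambda=C_0^{1/p_-}$ with $C_0\ge 1$ are all sound. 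Two routine points deserve a sentence each in a polished write-up. First, the normalization step uses the unit-ball property $\|F\|_{\tilde q(\cdot)}\le 1\Rightarrow\int|F|^{\tilde q(z)}\,dz\le 1$, which requires a one-line monotone-convergence justification from the Luxemburg definition. Second, on the set where $p(z)=q$ one has $\tilde q(z)=\infty$, so the conjugate pair degenerates to $s=\infty$, $s'=1$ and Young's inequality in the form you quote does not apply; there one must instead use the essential-supremum part of the $L^{\tilde q(\cdot)}$ modular to get $|F(z)|\le 1$ a.e.\ and hence $|F(z)G(z)|^{p(z)}\le|G(z)|^{q}$ directly. Neither issue affects the constant $C=2^{1/p_-}$ or the structure of the argument.
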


\begin{lem}\cite{izukicom}

Let $p(\cdot)$ be a function within the class $\mathcal{B}(\mathbb{R}^{n})$. For any ball $B$ in $\mathbb{R}^{n}$, we get

$$\frac{1}{ \vert B \vert } \Vert {\bf{1}}_{B} \Vert _{{p'(\cdot)}}\Vert {\bf{1}}_{B} \Vert _{{p(\cdot)}} \leqslant C,$$
where $C>0$ .
\end{lem}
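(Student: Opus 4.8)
My plan is to read the claimed bound as one half of the standard two-sided norm equivalence for characteristic functions and to isolate the two inequalities hidden in it. The comparison $\frac{1}{|B|}\|{\bf{1}}_B\|_{p(\cdot)}\|{\bf{1}}_B\|_{p'(\cdot)}\gtrsim 1$ comes for free from the generalized H\"older inequality recorded above (the lemma from \cite{ref20}): testing it with $f=g={\bf{1}}_B$ gives
$$|B|=\int_{\mathbb{R}^n}{\bf{1}}_B(x)\,dx\le r_p\,\|{\bf{1}}_B\|_{p(\cdot)}\,\|{\bf{1}}_B\|_{p'(\cdot)},$$
so the product is automatically at least $|B|/r_p$. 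The real content of the statement is therefore the reverse (upper) inequality, and this is precisely where the hypothesis $p(\cdot)\in\mathcal{B}(\mathbb{R}^n)$, i.e. the $L^{p(\cdot)}$-boundedness of $M$, has to enter.

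For the upper bound I would pass to the associate space. By the norm conjugate formula for variable Lebesgue spaces there is a constant $C$ with
$$\|{\bf{1}}_B\|_{p'(\cdot)}\le C\sup\left\{\int_B|h(x)|\,dx:\ \|h\|_{p(\cdot)}\le 1\right\},$$
so it is enough to estimate $\int_B|h|$ for a fixed $h$ with $\|h\|_{p(\cdot)}\le 1$. The bridge to the maximal operator is an elementary covering estimate: writing $B=B(z,r)$, for every $x\in B$ the ball $B(x,2r)$ contains $B$, so that, up to a dimensional constant $c_n$ absorbing the normalization of $M$,
$$\frac{1}{|B|}\int_B|h(y)|\,dy\le c_n\,Mh(x)\qquad(x\in B).$$

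Multiplying this pointwise inequality by ${\bf{1}}_B(x)$ and taking $L^{p(\cdot)}$-norms turns the (constant) left-hand side into $\frac{1}{|B|}\bigl(\int_B|h|\bigr)\|{\bf{1}}_B\|_{p(\cdot)}$, while the right-hand side is controlled by $c_n\|Mh\|_{p(\cdot)}\le c_nC\|h\|_{p(\cdot)}\le c_nC$ thanks to $p(\cdot)\in\mathcal{B}(\mathbb{R}^n)$. Rearranging yields $\int_B|h|\le c_nC\,|B|/\|{\bf{1}}_B\|_{p(\cdot)}$ uniformly in $h$, and feeding this back into the duality bound gives $\|{\bf{1}}_B\|_{p'(\cdot)}\le C'|B|/\|{\bf{1}}_B\|_{p(\cdot)}$, which is exactly $\frac{1}{|B|}\|{\bf{1}}_B\|_{p(\cdot)}\|{\bf{1}}_B\|_{p'(\cdot)}\le C'$. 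I expect the only delicate point to be the first step: the norm conjugate (duality) formula must be applied with the correct normalization, and the test functions $h$ must be kept in the unit ball of $L^{p(\cdot)}$ (not $L^{p'(\cdot)}$), so that the hypothesis actually available — boundedness of $M$ on $L^{p(\cdot)}$ — is the one invoked. Everything past that is the routine covering inequality and a single use of the maximal estimate.
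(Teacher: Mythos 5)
Your argument is correct and is essentially the standard proof of this lemma as given in the cited source \cite{izukicom} (the paper itself only quotes the result): the norm conjugate formula reduces the claim to bounding $\int_B|h|$ over the unit ball of $L^{p(\cdot)}$, the covering inequality $\frac{1}{|B|}\int_B|h|\leq c_n Mh(x)$ for $x\in B$ converts this into a maximal-function estimate, and the hypothesis $p(\cdot)\in\mathcal{B}(\mathbb{R}^n)$ closes the loop. Your added observation that the reverse inequality follows from generalized H\"older is accurate, though not needed for the statement as written.
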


\begin{lem}\cite{izukicom}

Assuming that $p(\cdot)$ is a function in the class $\mathcal{B}(\mathbb{R}^{n}),$ there exists a positive constant $C$ such that, for every ball $B$ in $\mathbb{R}^{n}$ and every measurable subset $S$ within $B,$ the following inequalities hold:

$$\frac{ \Vert {\bf{1}}_{B} \Vert _{{p(\cdot)}}}{ \Vert {\bf{1}}_{S} \Vert _{{p(\cdot)}}}\leqslant  \frac{ \vert B \vert }{ \vert S \vert },\;\;\; \frac{ \Vert {\bf{1}}_{S} \Vert _{{p(\cdot)}}}{ \Vert {\bf{1}}_{B} \Vert _{{p(\cdot)}}}\leqslant  \biggl( \frac{ \vert S \vert }{ \vert B \vert } \biggr)^{\omega _{1}}, \;\;\;\frac{ \Vert {\bf{1}}_{S} \Vert _{{p'(\cdot)}}}{ \Vert {\bf{1}}_{B} \Vert _{{p'(\cdot)}}}\leqslant  \biggl(\frac{ \vert S \vert }{ \vert B \vert } \biggr)^{\omega _{2}},$$

here  $0<\omega _{1}, \omega _{2}<1$.
\end{lem}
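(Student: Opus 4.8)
The plan is to pivot everything on the two-sided comparison $|E|\sim\|{\bf{1}}_E\|_{p(\cdot)}\|{\bf{1}}_E\|_{p'(\cdot)}$, valid for any measurable $E$ with $0<|E|<\infty$. The direction $\|{\bf{1}}_E\|_{p(\cdot)}\|{\bf{1}}_E\|_{p'(\cdot)}\lesssim|E|$ is the content of the preceding lemma when $E$ is a ball, while the reverse inequality comes from feeding the pair $({\bf{1}}_E,{\bf{1}}_E)$ into the generalized H\"older inequality of Lemma~2.1,
\begin{equation*}
|E|=\int_{\mathbb{R}^n}{\bf{1}}_E(x)\,{\bf{1}}_E(x)\,dx\le r_p\,\|{\bf{1}}_E\|_{p(\cdot)}\,\|{\bf{1}}_E\|_{p'(\cdot)}.
\end{equation*}
Together with the monotonicity ${\bf{1}}_S\le{\bf{1}}_B$ (hence $\|{\bf{1}}_S\|_{r(\cdot)}\le\|{\bf{1}}_B\|_{r(\cdot)}$ for any admissible exponent $r$), these are the only elementary ingredients I need.

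For the first inequality I would argue directly: H\"older applied to ${\bf{1}}_S$ followed by monotonicity gives $|S|\le r_p\|{\bf{1}}_S\|_{p(\cdot)}\|{\bf{1}}_S\|_{p'(\cdot)}\le r_p\|{\bf{1}}_S\|_{p(\cdot)}\|{\bf{1}}_B\|_{p'(\cdot)}$, while the preceding lemma yields $\|{\bf{1}}_B\|_{p'(\cdot)}\le C|B|/\|{\bf{1}}_B\|_{p(\cdot)}$. Substituting the second bound into the first and rearranging produces $\|{\bf{1}}_B\|_{p(\cdot)}/\|{\bf{1}}_S\|_{p(\cdot)}\lesssim|B|/|S|$, which is exactly the first estimate. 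Note that this uses the preceding lemma only on the ball $B$, so no norm--measure comparison for the rough set $S$ is required here.

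The two remaining estimates are power-type decays, and I would deduce them as two instances of a single statement: for every $q(\cdot)\in\mathcal{B}(\mathbb{R}^n)$ there exist $\omega=\omega(q)\in(0,1)$ and $C>0$ with
\begin{equation*}
\frac{\|{\bf{1}}_S\|_{q(\cdot)}}{\|{\bf{1}}_B\|_{q(\cdot)}}\le C\Bigl(\frac{|S|}{|B|}\Bigr)^{\omega},\qquad S\subseteq B.
\end{equation*}
Granting this, the second inequality is the case $q=p$ and the third is the case $q=p'$, the latter being legitimate because $q(\cdot)\in\mathcal{B}(\mathbb{R}^n)$ forces $1<q_-\le q_+<\infty$ and hence $p'(\cdot)\in\mathcal{B}(\mathbb{R}^n)$ by the duality theorem for the class $\mathcal{B}$. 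For orientation, the matching lower bound is elementary: for $x\in B$ one checks $M{\bf{1}}_S(x)\gtrsim|S|/|B|$, so $(|S|/|B|)\,{\bf{1}}_B\lesssim M{\bf{1}}_S$ pointwise, and boundedness of $M$ on $L^{q(\cdot)}$ gives $|S|/|B|\lesssim\|{\bf{1}}_S\|_{q(\cdot)}/\|{\bf{1}}_B\|_{q(\cdot)}$.

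The decisive point, and the main obstacle, is the \emph{upper} bound with an exponent \emph{strictly} below $1$. The elementary toolkit (maximal-function lower bounds, the comparison $|E|\sim\|{\bf{1}}_E\|_{q(\cdot)}\|{\bf{1}}_E\|_{q'(\cdot)}$, and duality) returns only estimates of power exactly $1$, and combining them cancels to the trivial ratio $\lesssim 1$; none of it produces a genuine $\omega<1$. To gain the strict power I would invoke the self-improvement (``left-openness'') of the class $\mathcal{B}$ due to Diening: the boundedness of $M$ on $L^{q(\cdot)}$ is equivalent to a Muckenhoupt-type condition for variable exponents which self-improves, and it is precisely this gain of a little integrability that upgrades the power-$1$ estimate to a power-$\omega$ decay with $\omega\in(0,1)$. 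Apart from this single non-elementary input, the entire lemma is bookkeeping built on the comparison of the first paragraph.
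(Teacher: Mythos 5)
The paper itself offers no proof of this lemma: it is quoted verbatim from Izuki \cite{izukicom}, so there is no internal argument to compare yours against. Judged on its own terms, your treatment of the first inequality is complete and correct: H\"older applied to the pair $({\bf{1}}_S,{\bf{1}}_S)$, monotonicity of the norm in the set, and the preceding lemma (Lemma 2.3) applied only to the ball $B$ do combine, after rearrangement, into $\Vert {\bf{1}}_{B}\Vert_{p(\cdot)}/\Vert {\bf{1}}_{S}\Vert_{p(\cdot)}\leqslant C\,\vert B\vert/\vert S\vert$. Your reduction of the third inequality to the second via the duality $p(\cdot)\in\mathcal{B}(\mathbb{R}^n)\Rightarrow p'(\cdot)\in\mathcal{B}(\mathbb{R}^n)$ is also legitimate.

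The gap is in the two power-decay estimates, which are the substantive content of the lemma. You correctly name Diening's left-openness of the class $\mathcal{B}$ as the indispensable non-elementary input, but you never derive the bound from it, and the mechanism you describe cannot work as stated: there is no power-$1$ upper estimate $\Vert{\bf{1}}_S\Vert_{q(\cdot)}/\Vert{\bf{1}}_B\Vert_{q(\cdot)}\lesssim \vert S\vert/\vert B\vert$ to ``upgrade'' --- already for a constant exponent $q>1$ the ratio equals $(\vert S\vert/\vert B\vert)^{1/q}$, which is $\geq\vert S\vert/\vert B\vert$, not $\leq$. What actually has to happen is, in outline: by left-openness choose $u>1$ with $p'(\cdot)/u\in\mathcal{B}(\mathbb{R}^n)$; by the norm-conjugate formula $\Vert{\bf{1}}_S\Vert_{p(\cdot)}\lesssim\sup\{\int_S\vert g\vert:\Vert g\Vert_{p'(\cdot)}\leq 1\}$; split $\int_S\vert g\vert\leq\vert S\vert^{1/u'}(\int_B\vert g\vert^u)^{1/u}$ by constant-exponent H\"older; then bound $\frac{1}{\vert B\vert}\int_B\vert g\vert^u\leq C\inf_{x\in B}M(\vert g\vert^u)(x)$ and use the boundedness of $M$ on $L^{p'(\cdot)/u}$ together with $\vert B\vert\leq r_p\Vert{\bf{1}}_B\Vert_{p(\cdot)}\Vert{\bf{1}}_B\Vert_{p'(\cdot)}$ to get $(\int_B\vert g\vert^u)^{1/u}\lesssim\vert B\vert^{1/u-1}\Vert{\bf{1}}_B\Vert_{p(\cdot)}$. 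Assembling these yields $\Vert{\bf{1}}_S\Vert_{p(\cdot)}/\Vert{\bf{1}}_B\Vert_{p(\cdot)}\lesssim(\vert S\vert/\vert B\vert)^{1/u'}$, so $\omega_1=1/u'\in(0,1)$, and symmetrically for $\omega_2$. It is only in some such chain that the exponents $\omega_1,\omega_2$ materialize; as written, your proposal establishes the first of the three inequalities and correctly locates, but does not close, the hard step for the other two.
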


\begin{lem}\cite{ref4}
    
{\rm
 Let $\left\{g_{j}\right\}_{j=1}^{\infty}$ be the sequences of locally integrable functions, $1<r<\infty$ and   $p(\cdot) \in \mathcal{B}\left(\mathbb{R}^{n}\right)$. Then
}
$$
\left\|\left(\sum_{j=1}^{\infty}\left|M  g_{j}\right|^{r}\right)^{1 / r}\right\|_{p(\cdot)} \leqslant C\left\|\left(\sum_{j=1}^{\infty}\left|g_{j}\right|^{r}\right)^{1 / r}\right\|_{p(\cdot)}.
$$
\end{lem}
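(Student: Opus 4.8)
The plan is to deduce this vector-valued inequality from its weighted scalar analogue by means of the Rubio de Francia extrapolation machinery, which is the mechanism underlying the cited result. Write
$$F := \left(\sum_{j=1}^{\infty} |M g_j|^{r}\right)^{1/r}, \qquad G := \left(\sum_{j=1}^{\infty} |g_j|^{r}\right)^{1/r},$$
so that the goal becomes the extrapolation-type estimate $\|F\|_{p(\cdot)} \leqslant C\, \|G\|_{p(\cdot)}$. The classical starting point is the weighted vector-valued Fefferman--Stein inequality: for a fixed $p_0 \in (1,\infty)$ and every Muckenhoupt weight $w \in A_{p_0}$ one has $\int_{\mathbb{R}^n} F^{p_0} w \, dx \leqslant C \int_{\mathbb{R}^n} G^{p_0} w \, dx$, with $C$ depending only on $n$, $r$, $p_0$ and the constant $[w]_{A_{p_0}}$.

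Next I would set up the duality and the iteration operator. Since $p(\cdot) \in \mathcal{B}(\mathbb{R}^n)$ forces $p_- > 1$, I fix an exponent $p_0 \in (1, p_-)$, so that $p(\cdot)/p_0 \in \mathcal{P}(\mathbb{R}^n)$; the self-improving and duality properties of the class $\mathcal{B}$ then guarantee that $M$ is bounded on the associate space $L^{(p(\cdot)/p_0)'}(\mathbb{R}^n)$. Using the associate-space duality
$$\|F\|_{p(\cdot)}^{p_0} = \bigl\|F^{p_0}\bigr\|_{p(\cdot)/p_0} \sim \sup\left\{ \int_{\mathbb{R}^n} F^{p_0} h \, dx \ : \ 0 \leqslant h, \ \|h\|_{(p(\cdot)/p_0)'} \leqslant 1 \right\},$$
I would fix an admissible test function $h$ and form the Rubio de Francia operator $\mathcal{R}h := \sum_{k=0}^{\infty} 2^{-k} \|M\|_{L^{(p(\cdot)/p_0)'}}^{-k}\, M^k h$. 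Its standard properties are $h \leqslant \mathcal{R}h$ pointwise, $\|\mathcal{R}h\|_{(p(\cdot)/p_0)'} \leqslant 2\|h\|_{(p(\cdot)/p_0)'}$, and $\mathcal{R}h \in A_1$ with $[\mathcal{R}h]_{A_1} \leqslant 2\|M\|_{L^{(p(\cdot)/p_0)'}}$.

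The final step combines the two ingredients. Since $\mathcal{R}h \in A_1 \subset A_{p_0}$ with uniformly controlled constant, the weighted inequality applies with weight $\mathcal{R}h$; using $h \leqslant \mathcal{R}h$ together with the generalized Hölder inequality (the first cited lemma) to pass back to the variable-exponent pairing yields
$$\int_{\mathbb{R}^n} F^{p_0} h \, dx \leqslant \int_{\mathbb{R}^n} F^{p_0}\, \mathcal{R}h \, dx \leqslant C \int_{\mathbb{R}^n} G^{p_0}\, \mathcal{R}h \, dx \leqslant C \bigl\|G^{p_0}\bigr\|_{p(\cdot)/p_0}\, \|\mathcal{R}h\|_{(p(\cdot)/p_0)'} \leqslant C\, \|G\|_{p(\cdot)}^{p_0}.$$
Taking the supremum over all admissible $h$ gives $\|F\|_{p(\cdot)}^{p_0} \leqslant C\, \|G\|_{p(\cdot)}^{p_0}$, which is the claimed inequality.

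The hard part will not be the combinatorial combination above, which is formal, but the structural input that $p(\cdot) \in \mathcal{B}(\mathbb{R}^n)$ can be \emph{opened up} so that $M$ remains bounded on the associate space $L^{(p(\cdot)/p_0)'}$ for some $p_0 > 1$. This is precisely Diening's self-improving property of the class $\mathcal{B}(\mathbb{R}^n)$, and it is what allows the Rubio de Francia algorithm to close; any attempt to prove the lemma without invoking extrapolation would have to reprove this fact, which is genuinely the heart of the matter.
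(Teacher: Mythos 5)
Your proposal is correct and is essentially the proof of the cited source: the paper itself gives no argument for this lemma, simply quoting it from \cite{ref4}, where it is obtained exactly by Rubio de Francia extrapolation from the weighted vector-valued Fefferman--Stein inequality, with Diening's left-openness/duality theorem for the class $\mathcal{B}(\mathbb{R}^n)$ supplying the boundedness of $M$ on $L^{(p(\cdot)/p_0)'}(\mathbb{R}^n)$. The only routine point you leave implicit is the usual truncation to finitely many $g_j$ to ensure finiteness when invoking the weighted estimate, which does not affect correctness.
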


\begin{defn}

 Let $0<q<\infty, p(\cdot) \in \mathcal{P}\left(\mathbb{R}^{n}\right), 0 \leq \lambda<\infty$, $\theta>0$ and $\eta (\cdot): \mathbb{R}^{n} \rightarrow \mathbb{R}$ with $\eta  \in L^{\infty}\left(\mathbb{R}^{n}\right)$.\\
(i) The homogeneous grand variable Herz-Morrey space $M \dot{K}_{ \lambda, p(\cdot)}^{\eta (\cdot), q), \theta}\left(\mathbb{R}^{n}\right)$  is defined by

$$
M \dot{K}_{ \lambda, p(\cdot)}^{\eta (\cdot), q), \theta}\left(\mathbb{R}^{n}\right):=\left\{g\in  L_{\mathrm{loc}}^{p(\cdot)}\left(\mathbb{R}^{n} \backslash\{0\}\right):\|g\|_{M \dot{K}_{ \lambda, p(\cdot)}^{\eta (\cdot), q), \theta}\left(\mathbb{R}^{n}\right)}<\infty\right\}
$$

where

$$
\|g\|_{M \dot{K}_{ \lambda, p(\cdot)}^{\eta (\cdot), q), \theta}\left(\mathbb{R}^{n}\right)}:= \sup _{\epsilon>0}\sup _{L \in \mathbb{Z}} 2^{-L \lambda}\left( \epsilon^\theta \sum_{k=-\infty}^{L}\left\|2^{k \eta (\cdot)} g {\bf{1}}_{k}\right\|_{p(\cdot)}^{q(1+\epsilon)}\right)^{\frac{1}{q(1+\epsilon)}}.
$$

(ii) The non-homogeneous grand variable Herz-Morrey space $ MK_{\lambda, p(\cdot)}^{\eta (\cdot), q), \theta}\left(\mathbb{R}^{n}\right)$  is defined by

$$
 MK_{\lambda, p(\cdot)}^{\eta (\cdot), q), \theta}\left(\mathbb{R}^{n}\right):=\left\{g\in  L_{\mathrm{loc}}^{p(\cdot)}\left(\mathbb{R}^{n} \backslash\{0\}\right):\|g\|_{ MK_{\lambda, p(\cdot)}^{\eta (\cdot), q), \theta}\left(\mathbb{R}^{n}\right)}<\infty\right\}
$$

where

$$
\|g\|_{ MK_{\lambda, p(\cdot)}^{\eta (\cdot), q), \theta}\left(\mathbb{R}^{n}\right)}:=\sup_{\epsilon>0}\sup _{L \in \mathbb{N}_{0}} 2^{-L \lambda}\left(\epsilon^\theta\sum_{k=0}^{L}\left\|2^{k \eta (\cdot)} g {\bf{1}}_{k}\right\|_{p(\cdot)}^{q(1+\epsilon)}\right)^{\frac{1}{q(1+\epsilon)}}.
$$

\end{defn}

{\begin{lem}\label{lem2.1}\cite{ref32}
	Let $D>1$ and $q \in \mathcal{P}_{0,\infty}(\mathbb{R}^n)$. Then
	\begin{equation}\label{lemma 2.2 dr asad}
	\frac{1}{c_0} r^{\frac{n}{q(0)}} \leq \| {\bf{1}} _{B(0,Dr)\setminus B(0,r)} \| _{L^{q(\cdot)}} \leq c_0 r^{\frac{n}{q(0)}}, \; \textit{for}  \; 0<r \leq 1
	\end{equation}
	and 
	\begin{equation}\label{lemma 2.2 dr asad 2}
	\frac{1}{c_\infty} r^{\frac{n}{q_\infty}} \leq \| {\bf{1}} _{B(0,Dr)\setminus B(0,r)} \| _{L^{q(\cdot)}} \leq c_\infty r^{\frac{n}{q_\infty}}, \;\textit{for}  \; r \geq 1,
	\end{equation}
	respectively, where $c_0 \geq 1$ and $c_\infty \geq 1$ depend on $D$ but  not depend on $r$.
\end{lem}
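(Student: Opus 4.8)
The plan is to estimate the modular $\int_{A_r}\lambda^{-q(z)}\,dz$ for the annulus $A_r:=B(0,Dr)\setminus B(0,r)$ with the natural choice $\lambda\sim r^{n/q(0)}$ (for $r\le1$) or $\lambda\sim r^{n/q_\infty}$ (for $r\ge1$), and then to convert these two-sided modular bounds into the asserted two-sided norm bounds. First I would record the elementary volume estimate $|A_r|=\omega_n(D^n-1)r^n\sim r^n$, with implicit constants depending only on $n$ and $D$. The whole argument then rests on comparing the pointwise exponent $q(z)$ with the limiting value $q(0)$ (resp.\ $q_\infty$) on the annulus.

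For $0<r\le1$, every $z\in A_r$ satisfies $r\le|z|<Dr$, so the log-H\"older condition at the origin \eqref{r7} gives $|q(z)-q(0)|\le C_0/\ln(1/|z|)$ (at least once $Dr\le\tfrac12$). Writing
$$
r^{-nq(z)/q(0)}=r^{-n}\,\exp\!\Big(\tfrac{n}{q(0)}\,(q(z)-q(0))\,\ln\tfrac1r\Big),
$$
the key is to bound the exponent: since $|z|<Dr$ gives $\ln(1/|z|)>\ln(1/r)-\ln D$, the ratio $\ln(1/r)/\ln(1/|z|)$ stays bounded by a constant depending only on $D$ for small $r$, so the exponential factor lies between two positive constants. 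Hence $\int_{A_r}r^{-nq(z)/q(0)}\,dz\sim r^{-n}|A_r|\sim1$, which is exactly a two-sided modular bound for $\lambda=r^{n/q(0)}$. I would then invoke the standard norm--modular relation $\min(s^{q_-},s^{q_+})\le\int_{\mathbb{R}^n}|f|^{q(z)}\,dz\le\max(s^{q_-},s^{q_+})$ with $s=\|f\|_{L^{q(\cdot)}}$: applied to $f=\mathbf{1}_{A_r}/r^{n/q(0)}$, the bound $\int_{\mathbb{R}^n}|f|^{q(z)}dz\sim1$ forces $\|f\|_{L^{q(\cdot)}}\sim1$, whence $\|\mathbf{1}_{A_r}\|_{L^{q(\cdot)}}\sim r^{n/q(0)}$, proving \eqref{lemma 2.2 dr asad}.

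The case $r\ge1$ is entirely parallel, replacing \eqref{r7} by the decay condition \eqref{r6}: for $z\in A_r$ we have $|z|\ge r\ge1$, so $|q(z)-q_\infty|\le C_\infty/\ln(e+|z|)$, and now $\ln r/\ln(e+|z|)\le\ln r/\ln(e+r)<1$ controls the analogous exponential factor uniformly in $r\ge1$. This yields $\int_{A_r}(r^{n/q_\infty})^{-q(z)}\,dz\sim1$ and hence \eqref{lemma 2.2 dr asad 2} after the same modular-to-norm conversion.

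The main obstacle I anticipate is the transition regime where $r$ is close to $1$ (in the first case), so that part of the annulus falls in $\{|z|>\tfrac12\}$, where \eqref{r7} no longer forces $q(z)$ to be near $q(0)$. There the cleanest fix is to observe that for $r$ bounded away from $0$ one has $r^{n/q(0)}\sim1$, while $\mathbf{1}_{A_r}$ is the characteristic function of a bounded set of measure $\sim1$ on which $q_-\le q(z)\le q_+$, so its norm is trivially comparable to $1$; splitting the range of $r$ at a fixed threshold (e.g.\ $r=\tfrac{1}{2D}$) and absorbing the finitely-determined transition constants into $c_0$ then closes the estimate. A symmetric remark handles the boundary $r\sim1$ in the second case.
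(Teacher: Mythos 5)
The paper does not prove this lemma at all: it is quoted verbatim from Samko \cite{ref32} (where it appears as a basic property of variable Herz spaces), so there is no internal proof to compare against. Your argument is correct and is essentially the standard one used in that reference: a two-sided modular estimate $\int_{A_r}\lambda^{-q(z)}\,dz\sim 1$ for $\lambda=r^{n/q(0)}$ (resp.\ $r^{n/q_\infty}$), obtained by writing $r^{-nq(z)/q(0)}=r^{-n}\exp\bigl(\tfrac{n}{q(0)}(q(z)-q(0))\ln\tfrac1r\bigr)$ and controlling the exponent via the log-H\"older condition at $0$ (resp.\ the decay condition), followed by the norm--modular comparison $\min(s^{q_-},s^{q_+})\le\rho(f)\le\max(s^{q_-},s^{q_+})$, which is legitimate here since $q_+<\infty$. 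You correctly identify and repair the only delicate point, namely the transition regime $1/(2D)\lesssim r\le 1$ where the condition \eqref{r7} (stated only for $|h|\le\tfrac12$) does not cover the whole annulus; splitting at a fixed threshold and absorbing the constants into $c_0$ is exactly the right fix. One small simplification: in the case $r\ge1$ no boundary adjustment is needed at all, since the decay condition \eqref{r6} holds for all $h$ and the bound $\ln r/\ln(e+|z|)\le 1$ is uniform over $r\ge1$.
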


\begin{theorem}

If $1<r<\infty$ and  $\eta (\cdot) \in L^{\infty}\left(\mathbb{R}^{n}\right) \cap$ $\mathcal{P}_{0}^{\log }\left(\mathbb{R}^{n}\right) \cap \mathcal{P}_{\infty}^{\log }\left(\mathbb{R}^{n}\right)$ with $\eta (0), \eta _{\infty} \in\left(-n \omega _{1}, n \omega _{2}\right)$, where $\omega _{1}, \omega _{2} \in(0,1)$ are constants appearing in \eqref{eq1}. Let  $p(\cdot) \in \mathcal{B}\left(\mathbb{R}^{n}\right),$ $0<q<\infty$, and $$0 \leqslant \lambda<\min \left\{\left(n \omega_{1}+\eta (0)\right) / 2,\left(n \omega_{1}+\eta _{\infty}\right) / 2\right\}.$$ Suppose that $T$ is a sublinear operator satisfying vector-valued inequality on $L^{p(\cdot)}\left(\mathbb{R}^{n}\right)$

\begin{equation}\label{eq2.2}
\left\|\left(\sum_{j=1}^{\infty}\left|T g_{j}\right|^{r}\right)^{\frac{1}{r}}\right\|_{p(\cdot)} \leqslant C\left\|\left(\sum_{j=1}^{\infty}\left|g_{j}\right|^{r}\right)^{\frac{1}{r}}\right\|_{p(\cdot)}
\end{equation}

for all sequences $\left\{g_{j}\right\}_{j=1}^{\infty}$ of locally integrable functions on $\mathbb{R}^{n}$. Then we have the vector-valued inequality on ${ M\dot{K}_{\lambda, p(\cdot)}^{\eta (\cdot), q), \theta}\left(\mathbb{R}^{n}\right)}$

$$
\left\|\left(\sum_{k=1}^{\infty}\left|T g_{k}\right|^{r}\right)^{\frac{1}{r}}\right\|_{{ M\dot{K}_{\lambda, p(\cdot)}^{\eta (\cdot), q), \theta}\left(\mathbb{R}^{n}\right)}} \leqslant C\left\|\left(\sum_{k=1}^{\infty}\left|g_{k}\right|^{r}\right)^{\frac{1}{r}}\right\|_{{ M\dot{K}_{\lambda, p(\cdot)}^{\eta (\cdot), q), \theta}\left(\mathbb{R}^{n}\right)}}.
$$

\end{theorem}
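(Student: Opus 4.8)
The plan is to reduce the grand Herz--Morrey estimate to a doubly-indexed geometric sum controlled by the size condition. Write $\Phi=\left(\sum_i|g_i|^r\right)^{1/r}$ and $\Psi=\left(\sum_i|Tg_i|^r\right)^{1/r}$, decompose each input as $g_i=\sum_{j\in\mathbb{Z}}g_i\mathbf{1}_j$ (with $\mathbf{1}_j=\mathbf{1}_{R_j}$ the characteristic function of the annulus $R_j=B_j\setminus B_{j-1}$), and set $\Psi_j=\left(\sum_i|T(g_i\mathbf{1}_j)|^r\right)^{1/r}$. By the sublinearity of $T$ and the triangle inequality in $\ell^r$ one has the pointwise bound $\Psi\le\sum_j\Psi_j$, so that for every annulus index $l$,
$$\left\|2^{l\eta(\cdot)}\Psi\,\mathbf{1}_l\right\|_{p(\cdot)}\le\Big(\sum_{j\le l-2}+\sum_{|j-l|\le1}+\sum_{j\ge l+2}\Big)\left\|2^{l\eta(\cdot)}\Psi_j\,\mathbf{1}_l\right\|_{p(\cdot)}=:U_l+V_l+W_l.$$
Abbreviating $b_j=\left\|2^{j\eta(\cdot)}\Phi\,\mathbf{1}_j\right\|_{p(\cdot)}$, the argument consists in proving $V_l\lesssim b_{l-1}+b_l+b_{l+1}$, $U_l\lesssim\sum_{j\le l-2}2^{-(l-j)\delta}b_j$ and $W_l\lesssim\sum_{j\ge l+2}2^{-(j-l)\delta'}b_j$ for suitable $\delta,\delta'>0$, and then reassembling these inside the grand Morrey norm.

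For the diagonal block $V_l$ I would invoke the hypothesis \eqref{eq2.2}: since $|j-l|\le1$ and $\eta\in L^\infty$, the decay and local log-Hölder conditions \eqref{r6}--\eqref{r7} let me replace $2^{l\eta(\cdot)}\mathbf{1}_l$ by the constant $2^{l\eta_l}$ with $\eta_l\in\{\eta(0),\eta_\infty\}$ chosen by the sign of $l$; then \eqref{eq2.2} applied to $\{g_i\mathbf{1}_j\}_i$ gives $\|\Psi_j\|_{p(\cdot)}\lesssim\|\Phi\mathbf{1}_j\|_{p(\cdot)}$, whence $V_l\lesssim b_{l-1}+b_l+b_{l+1}$. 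For the off-diagonal blocks I would use the size condition. For $j\le l-2$ and $x\in R_l$ one has $|x-y|\sim 2^{l}$ for $y\in R_j$, so $\Psi_j(x)\mathbf{1}_l(x)\lesssim 2^{-ln}\left(\sum_i\big(\int_{R_j}|g_i(y)|\,dy\big)^r\right)^{1/r}\mathbf{1}_l(x)$; Minkowski's integral inequality moves the $\ell^r$-sum inside the integral to produce $\int_{R_j}\Phi$, and the generalized Hölder inequality (the lemma with constant $r_p$) bounds this by $\|\Phi\mathbf{1}_j\|_{p(\cdot)}\|\mathbf{1}_j\|_{p'(\cdot)}$. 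Taking the $L^{p(\cdot)}$-norm in $x$ and using the characteristic-function lemmas --- the relation $\|\mathbf{1}_B\|_{p(\cdot)}\|\mathbf{1}_B\|_{p'(\cdot)}\sim|B|$ and the power bounds with exponents $\omega_1,\omega_2$ --- together with the weight reduction $\|2^{l\eta}\mathbf{1}_l\|_{p(\cdot)}\sim2^{l\eta_l}\|\mathbf{1}_l\|_{p(\cdot)}$, yields decay rate $\delta=n\omega_2-\eta$, positive precisely because $\eta(0),\eta_\infty<n\omega_2$. The far block $W_l$ is handled identically with $|x-y|\sim 2^{j}$, producing $\delta'=n\omega_1+\eta>0$, where the lower bound $\eta(0),\eta_\infty>-n\omega_1$ is used (here $\eta\in\{\eta(0),\eta_\infty\}$). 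Each sign combination of $(l,j)$ must be checked separately, which is where the factor $\tfrac12$ in the hypothesis on $\lambda$ originates.

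Finally I would substitute into $\|\Psi\|_{M\dot K}=\sup_{\epsilon>0}\sup_{L}2^{-L\lambda}\big(\epsilon^\theta\sum_{l\le L}(U_l+V_l+W_l)^{q(1+\epsilon)}\big)^{1/(q(1+\epsilon))}$. The blocks $V_l$ and $U_l$ only involve $b_j$ with $j\le l\le L$, so a discrete Young/Jensen inequality for the exponentially decaying kernels keeps them inside the Morrey window and bounds their contribution by $\|\Phi\|_{M\dot K}$, uniformly in $\epsilon$ (the geometric constants stay bounded as $q(1+\epsilon)$ ranges over $(q,\infty)$). The \emph{genuine obstacle} is the part of $W_l$ with $j>L$, since those $b_j$ lie outside the truncation defining the level-$L$ norm. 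The decisive observation is that choosing $L'=j$ and $\epsilon'=\epsilon$ in the definition of $\|\Phi\|_{M\dot K}$ gives the single-term bound $b_j\le\epsilon^{-\theta/(q(1+\epsilon))}2^{j\lambda}\|\Phi\|_{M\dot K}$; inserting this, the outer factor $\epsilon^{\theta/(q(1+\epsilon))}$ cancels $\epsilon^{-\theta/(q(1+\epsilon))}$, the truncation factor $2^{-L\lambda}$ absorbs $2^{j\lambda}$ into $2^{(j-L)\lambda}$, and the remaining tail $\sum_{j>L}2^{-(j-l)\delta'}2^{(j-L)\lambda}$ converges exactly when $\lambda<\delta'$, which the hypothesis $\lambda<\min\{(n\omega_1+\eta(0))/2,(n\omega_1+\eta_\infty)/2\}$ guarantees. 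This cancellation of the grand parameter $\epsilon$ against the Morrey scaling is the heart of the proof and the step I expect to be hardest to make rigorous; once it is in place, taking $\sup_\epsilon\sup_L$ completes the estimate.
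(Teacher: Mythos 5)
Your proposal is correct and follows essentially the same route as the paper: the same annular decomposition into diagonal/near/far blocks, the hypothesis \eqref{eq2.2} for the diagonal block, the size condition plus the generalized H\"older inequality and the characteristic-function lemmas for the geometric decay rates $n\omega_2-\eta$ and $n\omega_1+\eta$, and the same decisive trick of bounding out-of-window terms $b_j$ ($j>L$) by $\epsilon^{-\theta/(q(1+\epsilon))}2^{j\lambda}\|\Phi\|_{M\dot K}$ so that the grand parameter cancels and the tail is summable under $\lambda<(n\omega_1+\eta)/2$ --- this is precisely the paper's treatment of the terms $I_2$, $I_4$ and $I_6$. The only cosmetic differences are that the paper splits the argument into the cases $q(1+\epsilon)>1$ and $q(1+\epsilon)\leqslant1$ (which is where its factor $\tfrac12$ in the decay exponent actually enters, via H\"older on the geometric kernel) where you invoke a single discrete Young/Jensen step, and that you organize the norm as one window $j\leqslant L$ rather than the paper's separate $E$, $G$, $H$ pieces.
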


\begin{rem}
    
 Here and below, we only declare our main results in the homogeneous grand variable Herz-Morrey space because the proof for the non-homogeneous case can be treated by the similar way and is much more easier.
\end{rem}

\begin{lem}  (see \cite{ref4})
   Let $p(\cdot), r $, and  $\left\{g_{j}\right\}_{j=1}^{\infty}$ are given in Theorem $2.8$, then 

$$
\left\|\left(\sum_{j=1}^{\infty}\left|\mathcal{M} g_{j}\right|^{r}\right)^{\frac{1}{r}}\right\|_{L^{p(\cdot)}\left(\mathbb{R}^{n}\right)} \leqslant C\left\|\left(\sum_{j=1}^{\infty}\left|g_{j}\right|^{r}\right)^{\frac{1}{r}}\right\|_{L^{p(\cdot)}\left(\mathbb{R}^{n}\right)}.
$$
\end{lem}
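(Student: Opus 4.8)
The plan is to reduce the grand variable Herz-Morrey estimate for $T$ to the fixed-exponent vector-valued inequality \eqref{eq2.2} via a Herz-type annular decomposition, treating the supremum over $\epsilon>0$ and the Morrey supremum over $L$ uniformly. Write $G_k := \bigl(\sum_{j}|g_j|^r\bigr)^{1/r}$ and $TG := \bigl(\sum_j |Tg_j|^r\bigr)^{1/r}$; by the grand Herz-Morrey quasi-norm we must control, uniformly in $\epsilon>0$ and $L\in\mathbb{Z}$, the quantity $2^{-L\lambda}\bigl(\epsilon^\theta\sum_{k\le L}\|2^{k\eta(\cdot)}\,TG\,\mathbf{1}_k\|_{p(\cdot)}^{q(1+\epsilon)}\bigr)^{1/q(1+\epsilon)}$. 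First I would fix $k$ and, on the annulus $R_k$, split $g_j = \sum_{m\in\mathbb{Z}} g_j\mathbf{1}_m$ into near, middle, and far pieces relative to $R_k$, so that $\|2^{k\eta(\cdot)}\,TG\,\mathbf{1}_k\|_{p(\cdot)}$ decomposes into three contributions $E_k^{1}$ (the $m\le k-2$ far-interior piece), $E_k^{2}$ (the diagonal block $k-1\le m\le k+1$), and $E_k^{3}$ (the $m\ge k+2$ far-exterior piece).

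For the diagonal block $E_k^{2}$ I would invoke the hypothesis \eqref{eq2.2} directly on the annulus: since $T$ is bounded in the vector-valued $L^{p(\cdot)}$ sense, $\|TG\,\mathbf{1}_k\|_{p(\cdot)}\lesssim \|G\,\mathbf{1}_{k-1}+G\,\mathbf{1}_k+G\,\mathbf{1}_{k+1}\|_{p(\cdot)}$, which is exactly the local term appearing in the target norm, so this block is handled with constant independent of $\epsilon$ and $L$. The off-diagonal blocks $E_k^{1}$ and $E_k^{3}$ are where the size condition on $T$ enters: for $x\in R_k$ and $g_j$ supported on $R_m$ with $|m-k|\ge 2$, the kernel bound $|Tg_j(x)|\lesssim \int |x-y|^{-n}|g_j(y)|\,dy$ gives $|Tg_j(x)|\lesssim 2^{-n\max(k,m)}\int_{R_m}|g_j|$, and after applying Lemma 2.1 (the generalized Hölder inequality) to pass $\int_{R_m}|g_j|\le r_p\|g_j\mathbf{1}_m\|_{p(\cdot)}\|\mathbf{1}_m\|_{p'(\cdot)}$ together with Lemma 2.3 and Lemma 2.4 (the $\|\mathbf{1}_B\|_{p}\|\mathbf{1}_B\|_{p'}/|B|\lesssim 1$ estimate and the submultiplicative size comparisons), one obtains geometric decay factors $2^{-(k-m)\delta_1}$ for $m<k$ and $2^{-(m-k)\delta_2}$ for $m>k$, with exponents controlled by $n\omega_1+\eta(0)$, $n\omega_2-\eta(0)$ (and the analogous $\eta_\infty$ quantities) via Lemma \ref{lem2.1}. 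The condition $\eta(0),\eta_\infty\in(-n\omega_1,n\omega_2)$ is precisely what makes $\delta_1,\delta_2>0$.

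After the pointwise/kernel reduction, the argument becomes a discrete Hardy-type summation: one has $E_k^{1}\lesssim\sum_{m\le k-2}2^{-(k-m)\delta_1}2^{(m-k)\eta\cdot}\|2^{m\eta(\cdot)}G\mathbf{1}_m\|_{p(\cdot)}$ and symmetrically for $E_k^{3}$. Substituting into the $\ell^{q(1+\epsilon)}$ sum over $k\le L$, I would apply the weighted Hardy inequality on sequences (or Hölder in the exponent $q(1+\epsilon)$ against the geometric weight) to exchange the order of summation and recover $\sum_{m\le L}\|2^{m\eta(\cdot)}G\mathbf{1}_m\|_{p(\cdot)}^{q(1+\epsilon)}$, times a constant depending only on $\delta_1,\delta_2$ and not on $\epsilon$ or $L$. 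The delicate point I would flag as the main obstacle is \emph{uniformity in $\epsilon$}: the geometric-series constants arising from the Hardy step must be bounded independently of the exponent $q(1+\epsilon)$ as $\epsilon\to0^+$ and $\epsilon\to\infty$, and the factor $\epsilon^\theta$ must be carried cleanly through the off-diagonal summation so that the final bound reproduces the $\sup_\epsilon\sup_L 2^{-L\lambda}(\epsilon^\theta\sum\cdots)^{1/q(1+\epsilon)}$ structure of the right-hand side. Because the small-$r$/large-$r$ behavior of $\|\mathbf{1}_{R_k}\|_{p(\cdot)}$ differs (Lemma \ref{lem2.1} splits at $r=1$, i.e. $k=0$), I would carry out the summation separately over $k\le 0$ and $k>0$, using $\eta(0)$ on the former range and $\eta_\infty$ on the latter, and then combine; the Morrey factor $2^{-L\lambda}$ is absorbed using $\lambda<\min\{(n\omega_1+\eta(0))/2,(n\omega_1+\eta_\infty)/2\}$, which guarantees the relevant partial sums converge with the correct power of $2^{L}$.
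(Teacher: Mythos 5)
Your proposal does not prove the statement at hand; it proves a different one. The lemma to be established is a pure variable-Lebesgue-space estimate: the Fefferman--Stein vector-valued maximal inequality
$$
\Bigl\|\Bigl(\sum_{j}|\mathcal{M}g_j|^{r}\Bigr)^{1/r}\Bigr\|_{L^{p(\cdot)}(\mathbb{R}^n)}\leqslant C\Bigl\|\Bigl(\sum_{j}|g_j|^{r}\Bigr)^{1/r}\Bigr\|_{L^{p(\cdot)}(\mathbb{R}^n)}
$$
for $1<r<\infty$ and $p(\cdot)\in\mathcal{B}(\mathbb{R}^n)$. It contains no Herz--Morrey structure at all: no annuli, no weight $2^{k\eta(\cdot)}$, no parameters $\epsilon,\theta,\lambda,L$. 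Everything you describe --- the annular decomposition $g_j=\sum_m g_j\mathbf{1}_m$, the diagonal/off-diagonal splitting, the size condition producing geometric decay $2^{-(k-m)\delta_1}$, the discrete Hardy step, uniformity in $\epsilon$, absorption of $2^{-L\lambda}$ --- is a sketch of the proof of Theorem 2.8 (the transference of a vector-valued $L^{p(\cdot)}$ bound for a sublinear operator $T$ to the grand variable Herz--Morrey scale), which the paper proves separately in Section 3. Worse, read as a proof of the present lemma your argument is circular: your treatment of the diagonal block invokes the hypothesis \eqref{eq2.2}, the vector-valued $L^{p(\cdot)}$ boundedness of $T$, and with $T=\mathcal{M}$ that hypothesis \emph{is} the lemma being proved. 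In the paper's logical architecture the lemma is an input (it combines with Theorem 2.8 to yield the corollary for $\mathcal{M}$), not an output of the Herz--Morrey machinery.

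What the lemma actually requires is the content of \cite{ref4} (Cruz-Uribe, Fiorenza, Martell and P\'erez), which the paper quotes without proof (it appears twice, as Lemma 2.5 and again here). The standard route is Rubio de Francia extrapolation: start from the classical weighted Fefferman--Stein inequality $\|(\sum_j(\mathcal{M}g_j)^{r})^{1/r}\|_{L^{p_0}(w)}\leqslant C\|(\sum_j|g_j|^{r})^{1/r}\|_{L^{p_0}(w)}$ for $w\in A_{p_0}$, and transfer it to $L^{p(\cdot)}$ using the boundedness of $\mathcal{M}$ on $L^{p(\cdot)}$ and on the conjugate space $L^{p'(\cdot)}$ (the latter available since $p(\cdot)\in\mathcal{B}(\mathbb{R}^n)$ in the relevant regime), building the Rubio de Francia iteration operator $\mathcal{R}h=\sum_{k\geqslant 0}\mathcal{M}^{k}h/(2\|\mathcal{M}\|)^{k}$ to produce an $A_1$ majorant and dualize. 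None of this appears in your proposal, so the key idea of the lemma --- passing from the weighted constant-exponent inequality to the variable-exponent one, or any other mechanism controlling $\mathcal{M}$ vector-valued in $L^{p(\cdot)}$ --- is missing entirely.
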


From Theorem $2.5$ and Lemma $2.7$, we obtain the following result for the Hardy-Littlewood maximal operator.

\begin{corollary}

 Let $\eta , r, q,p $, are given in Theorem $2.8$  and $$0 \leqslant \lambda<\min \left\{\left(n \omega_{1}+\eta (0)\right) / 2,\left(n \omega_{1}+\eta _{\infty}\right) / 2\right\},$$ then

$$
\left\|\left(\sum_{k=1}^{\infty}\left|\mathcal{M} g_{k}\right|^{r}\right)^{\frac{1}{r}}\right\|_{{ M\dot{K}_{\lambda, p(\cdot)}^{\eta (\cdot), q), \theta}\left(\mathbb{R}^{n}\right)}} \leqslant C\left\|\left(\sum_{k=1}^{\infty}\left|g_{k}\right|^{r}\right)^{\frac{1}{r}}\right\|_{{ M\dot{K}_{\lambda, p(\cdot)}^{\eta (\cdot), q), \theta}\left(\mathbb{R}^{n}\right)}}.
$$

\end{corollary}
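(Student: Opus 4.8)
The plan is to recognise this statement as the specialisation of the main boundedness theorem (Theorem 2.8) to the single operator $T=\mathcal{M}$, so that no new machinery is needed: once $\mathcal{M}$ is shown to satisfy the two hypotheses imposed on $T$ there, the conclusion is immediate. Concretely, Theorem 2.8 requires of the operator only that it be sublinear and that it obey the vector-valued bound \eqref{eq2.2} on $L^{p(\cdot)}\left(\mathbb{R}^{n}\right)$; neither the size condition from the introduction nor any further structure is invoked at the level of the corollary, and all hypotheses on $\eta,r,q,p$ and on $\lambda$ are inherited verbatim.

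First I would record that the Hardy--Littlewood maximal operator is sublinear. This is standard: from $|f+g|\le|f|+|g|$ and the monotonicity of the averaging process one gets $\mathcal{M}(f+g)(x)\le \mathcal{M}f(x)+\mathcal{M}g(x)$ pointwise, while $\mathcal{M}(cf)=|c|\,\mathcal{M}f$ for every scalar $c$. Hence $\mathcal{M}$ belongs to the class of operators to which Theorem 2.8 applies.

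Second, I would supply the exponent-space estimate \eqref{eq2.2} for $T=\mathcal{M}$. This is precisely the content of the vector-valued maximal inequality stated immediately above the corollary (the lemma drawn from \cite{ref4}): for $p(\cdot),r,\{g_j\}$ as in Theorem 2.8 one has
$$
\left\|\left(\sum_{j=1}^{\infty}\left|\mathcal{M} g_{j}\right|^{r}\right)^{\frac{1}{r}}\right\|_{L^{p(\cdot)}\left(\mathbb{R}^{n}\right)} \leqslant C\left\|\left(\sum_{j=1}^{\infty}\left|g_{j}\right|^{r}\right)^{\frac{1}{r}}\right\|_{L^{p(\cdot)}\left(\mathbb{R}^{n}\right)},
$$
which is \eqref{eq2.2} word for word with $T$ replaced by $\mathcal{M}$. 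With both hypotheses of Theorem 2.8 in hand, applying that theorem to $T=\mathcal{M}$ under the stated restrictions and the same bound $0\le\lambda<\min\{(n\omega_{1}+\eta(0))/2,(n\omega_{1}+\eta_{\infty})/2\}$ yields the asserted inequality on $M\dot{K}_{\lambda, p(\cdot)}^{\eta (\cdot), q), \theta}\left(\mathbb{R}^{n}\right)$.

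Finally, I should emphasise where any difficulty resides. There is essentially no obstacle in the corollary itself, since all the genuine analysis has already been absorbed into Theorem 2.8; the passage from the $L^{p(\cdot)}$ inequality to the Herz--Morrey inequality is exactly what that theorem performs. The only point meriting a line of care is checking that the parameter conventions match, namely that the class $\mathcal{B}\left(\mathbb{R}^{n}\right)$ and the range $1<r<\infty$ under which the Fefferman--Stein-type lemma of \cite{ref4} is stated coincide with the assumptions on $p$ and $r$ in Theorem 2.8. As both are phrased for $p(\cdot)\in\mathcal{B}\left(\mathbb{R}^{n}\right)$ and $1<r<\infty$, this compatibility is automatic, and the corollary follows by a one-line invocation of Theorem 2.8.
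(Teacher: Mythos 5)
Your proposal is correct and is exactly the paper's route: the corollary is obtained with no new argument by feeding the vector-valued maximal inequality of \cite{ref4} into Theorem 2.8 as hypothesis \eqref{eq2.2} with $T=\mathcal{M}$, together with the sublinearity of $\mathcal{M}$. The only caveat is that the paper's \emph{proof} of Theorem 2.8 also invokes the size condition $|Tg(x)|\leqslant C\int_{\mathbb{R}^{n}}|x-y|^{-n}|g(y)|\,\mathrm{d}y$ (despite its omission from the theorem's hypotheses), so strictly one should also record that $\mathcal{M}$ satisfies this condition for compactly supported $g$ and a.e.\ $x\notin\operatorname{supp}g$ — which it does, so the conclusion stands.
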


Let $\mathcal{S}\left(\mathbb{R}^n\right)$ denote the Schwartz functions and  $\mathcal{S}^{\prime}\left(\mathbb{R}^n\right)$ the set of all  tempered distributions. We define the Fourier transform of a function $g\in \mathcal{S}\left(\mathbb{R}^n\right)$ by 
$$\widehat{\varphi}(g)(y)=2\pi^{-n/2}\int\limits_{\mathbb{R}^n}e^{-i x\cdot y}g(x) dx, \; y \in \mathbb{R}^n,$$
while $\varphi^{\vee}$ is the inverse Fourier transform. Let $\varphi_{0} \in \mathcal{S}\left(\mathbb{R}^n\right)$ with $\varphi_{0}(y) \geqslant 0$ then we have
$$
\varphi_{0}(y)= \begin{cases}1, & |y| \leqslant 1 \\ 0, & |y| \geqslant 2.\end{cases}
$$
Let
$$
\varphi(y)=\varphi_{0}(y)-\varphi_{0}(2 y)
$$
and define
$$
\varphi_{\ell}(y)=\varphi\left(2^{-\ell} y\right), \quad  \ell \in \mathbb{N}.
$$
Then $\left\{\varphi_{\ell}\right\}_{\ell \in \mathbb{N}_{0}}$ be the resolution of unity, such that
$$
\sum_{\ell=0}^{\infty} \varphi_{\ell}(x)=1, \quad \ x \in \mathbb{R}^{n}.
$$

\begin{defn}

 Let $\left\{\varphi_{j}\right\}_{j \in \mathbb{N}_{0}}$ be a resolution of unity as above, $s \in \mathbb{R}, 0<\kappa , q \leq$ $\infty, p(\cdot) \in \mathcal{P}\left(\mathbb{R}^{n}\right)$ and $\eta (\cdot): \mathbb{R}^{n} \rightarrow \mathbb{R}$ with $\eta (\cdot) \in L^{\infty}\left(\mathbb{R}^{n}\right)$.

(i) Then the grand variable Herz-Morrey type Besov space is defined by

$$
{ M\dot{K}_{\lambda, p(\cdot)}^{\eta (\cdot), q), \theta}} B_{\kappa }^{s}\left(\mathbb{R}^{n}\right):=\left\{g\in  \mathcal{S}^{\prime}\left(\mathbb{R}^{n}\right):\|g\|_{\ell_{\kappa }\left({ M\dot{K}_{\lambda, p(\cdot)}^{\eta (\cdot), q), \theta}\left(\mathbb{R}^{n}\right)}\right)}<\infty\right\},
$$

where

$$
\|g\|_{{ M\dot{K}_{\lambda, p(\cdot)}^{\eta (\cdot), q), \theta}}  B_{\kappa }^{s}}:=\left\|\left\{2^{s j} \varphi_{j}^{\vee} *g\right\}_{j=0}^{\infty}\right\|_{\ell_{\kappa }\left({ M\dot{K}_{\lambda, p(\cdot)}^{\eta (\cdot), q), \theta}} \right)}.
$$

(ii) For $p_{+}<\infty$, the grand variable Herz-Morrey type Triebel-Lizorkin space is defined by

$$ M\dot{K}_{\lambda, p(\cdot)}^{\eta (\cdot), q), \theta} F_{\kappa }^{s}\left(\mathbb{R}^{n}\right):=\left\{g\in  \mathcal{S}^{\prime}\left(\mathbb{R}^{n}\right):\|g\|_{{ M\dot{K}_{\lambda, p(\cdot)}^{\eta (\cdot), q), \theta}}\left(\ell_{\kappa }\right)}<\infty\right\},
$$

where

$$
\|g\|_{{ M\dot{K}_{\lambda, p(\cdot)}^{\eta (\cdot), q), \theta}} F_{\kappa }^{s}}:=\left\|\left\{2^{s j} \varphi_{j}^{\vee} *g\right\}_{j=0}^{\infty}\right\|_{{ M\dot{K}_{\lambda, p(\cdot)}^{\eta (\cdot), q), \theta}}\left(\ell_{\kappa }\right)}.
$$

Here we denote respectively by $\ell_{\kappa }\left({ M\dot{K}_{\lambda, p(\cdot)}^{\eta (\cdot), q), \theta}}\right)$ and ${ M\dot{K}_{\lambda, p(\cdot)}^{\eta (\cdot), q), \theta}}\left(\ell_{\kappa }\right)$ the spaces of all sequences $\left\{g_{j}\right\}$ of measurable functions on $\mathbb{R}^{n}$ with finite quasi-norms

$$
\left\|\left\{g_{j}\right\}_{j=0}^{\infty}\right\|_{\ell_{\kappa }\left({ M\dot{K}_{\lambda, p(\cdot)}^{\eta (\cdot), q), \theta}}\right)}:=\left(\sum_{j=0}^{\infty}\left\|g_{j}\right\|_{{ M\dot{K}_{\lambda, p(\cdot)}^{\eta (\cdot), q), \theta}}}^{\kappa }\right)^{\frac{1}{\kappa }},
$$

and

$$
\left\|\left\{g_{j}\right\}_{j=0}^{\infty}\right\|_{{ M\dot{K}_{\lambda, p(\cdot)}^{\eta (\cdot), q), \theta}}\left(\ell_{\kappa }\right)}:=\left\|\left(\sum_{j=0}^{\infty}\left|g_{j}\right|^{\kappa }\right)^{1 / \kappa }\right\|_{{ M\dot{K}_{\lambda, p(\cdot)}^{\eta (\cdot), q), \theta}}}.
$$
\end{defn}
Let $S \geqslant0$,  $\varepsilon>0$ and  $\varPsi_{0}, \varPsi \in \mathcal{S}\left(\mathbb{R}^n\right)$ such that

\begin{gather}
\left|\widehat{\varPsi}_{0}(\varrho)\right|>0 \quad \text { on }\{|\varrho|<2 \varepsilon\} \label{eq2}\\
|\widehat{\varPsi}(\varrho)|>0 \quad \text { on }\left\{\frac{\varepsilon}{2}<|\varrho|<2 \varepsilon\right\} \label{eq3}
\end{gather}
and
\begin{equation}\label{eq4}
D^{\tau} \widehat{\varPsi}(0)=0, \;\it{for \;all}\;  |\tau| \leqslant S.
\end{equation}

Here, \eqref{eq2} and \eqref{eq3} are Tauberian conditions, while \eqref{eq4} expresses vanishing
moment conditions on $\varPsi$. 

In \cite{ref28}, J. Peetre introduced the classical Peetre’s maximal operator: 

Let a tempered distribution $g \in \mathcal{S}^{\prime}\left(\mathbb{R}^n\right)$, $a>0$, and $\left\{\Xi _{\ell}\right\}_{\ell\in  \mathbb{Z}} \subset \mathcal{S}\left(\mathbb{R}^n\right)$. Then system of maximal functions are given as
$$
\left(\Xi _{\ell}^{*}\right)_{a} g(x):=\sup _{y \in \mathbb{R}^n} \frac{\left|\Xi _{\ell} * g(x+y)\right|}{\left(1+2^{k}|y|\right)^{a}}, \quad x \in \mathbb{R}^n, \ell\in  \mathbb{Z}.
$$
Since $\Xi _{\ell} * g(y)$ makes sense pointwise, everything is well defined. We will often use dilates
$$
\Xi ^\prime_{\ell}(x)=2^{k n} \Xi \left(2^{k} x\right)
$$
of a fixed function $\Xi  \in \mathcal{S}\left(\mathbb{R}^n\right)$, where $\Xi _{0}(x)$ might be given by a separate function. Also continuous dilates are needed. If
$$
\Xi _{t}:=t^{-n} \Xi \left(t^{-1} \cdot\right).
$$
Then 
$$
\varPsi_{t,a}^{*}g(x):=\sup _{y \in \mathbb{R}^n} \frac{\left|\Xi _{t} * g(x+y)\right|}{\left(1+\frac{|y|}{t}\right)^{a}} \quad x \in \mathbb{R}^n, t>0.
$$

\begin{theorem}
If $\kappa , q \in(0, \infty], \omega >0$, $ s \in \mathbb{R}$ with $s<S+1$, and $\eta , q,p $, are same as given in Lemma  $2.9$. Let $p(\cdot) / p_{0} \in \mathcal{B}\left(\mathbb{R}^{n}\right)$ with $p_{0}<\min \left(p_{-}, 1\right)$.  Let $\Theta _{0}, \Theta $ $\in $ $\mathcal{S}\left(\mathbb{R}^{n}\right)$ be given by \eqref{eq2} and \eqref{eq3}, respectively. Then

(i) For $a>n / p_{0}$, then the space ${ M\dot{K}_{\lambda, p(\cdot)}^{\eta (\cdot), q), \theta}}B_{\kappa }^{s}\left(\mathbb{R}^{n}\right)$ can be characterized by

$$
{ M\dot{K}_{\lambda, p(\cdot)}^{\eta (\cdot), q), \theta}}B_{\kappa }^{s}\left(\mathbb{R}^{n}\right)=\left\{g\in  \mathcal{S}^{\prime}\left(\mathbb{R}^{n}\right):\|g\|_{{ M\dot{K}_{\lambda, p(\cdot)}^{\eta (\cdot), q), \theta}}}^{(i)} B_{\kappa }^{s}<\infty\right\}, \quad i=1, \cdots, 4
$$

where

\begin{align*}
& \|g\|_{{ M\dot{K}_{\lambda, p(\cdot)}^{\eta (\cdot), q), \theta}}B_{\kappa }^{s}}^{(1)}:=\left\|\Phi_{0} *g\right\|_{M\dot{K}_{\lambda, p(\cdot)}^{\eta (\cdot), q), \theta}}+\left(\int_{0}^{1} t^{-s \kappa }\left\|\Phi_{t} *g\right\|_{M\dot{K}_{\lambda, p(\cdot)}^{\eta (\cdot), q), \theta}}^{\kappa } \frac{\mathrm{d} t}{t}\right)^{1 / \kappa } \\
& \|g\|_{M\dot{K}_{\lambda, p(\cdot)}^{\eta (\cdot), q), \theta} B_{\kappa }^{s}}^{(2)}:=\left\|\left(\Phi_{0}^{*} g\right)_a\right\|_{M \dot{K}_{q, p}^{\eta (\cdot), \lambda}}+\left(\int_{0}^{1} t^{-s \kappa }\left\|\left(\Phi_{t}^{*} g\right)_{a}\right\|_{M\dot{K}_{\lambda, p(\cdot)}^{\eta (\cdot), q), \theta}}^{\kappa } \frac{\mathrm{d} t}{t}\right)^{1 / \kappa } \\
& \|g\|_{{M\dot{K}_{\lambda, p(\cdot)}^{\eta (\cdot), q), \theta}}B_{\kappa }^{s}}^{(3)}:=\left(\sum_{k=0}^{\infty} 2^{s k \kappa }\left\|\left(\Phi_{k}^{*} g\right)_{a}\right\|_{M\dot{K}_{\lambda, p(\cdot)}^{\eta (\cdot), q), \theta}}^{\kappa }\right)^{1 / \kappa } \\
& \|g\|_{{M\dot{K}_{\lambda, p(\cdot)}^{\eta (\cdot), q), \theta}}B_{\kappa }^{s}}^{(4)}:=\left(\sum_{k=0}^{\infty} 2^{s k \kappa }\left\|\Phi_{k} *g\right\|_{{M\dot{K}_{\lambda, p(\cdot)}^{\eta (\cdot), q), \theta}}}^{\kappa }\right)^{1 / \kappa }.
\end{align*}

Then, $\left\{\|\cdot\|_{{M\dot{K}_{\lambda, p(\cdot)}^{\eta (\cdot), q), \theta}} B_{\kappa }^{s}}^{(i)}\right\}_{i=1}^{4}$ are equivalent.

(ii) If $p_{0}<\kappa $, then for $a>n / p_{0}$ the space ${M\dot{K}_{\lambda, p(\cdot)}^{\eta (\cdot), q), \theta}} F_{\kappa }^{s}\left(\mathbb{R}^{n}\right)$ can be characterized by

$$
{M\dot{K}_{\lambda, p(\cdot)}^{\eta (\cdot), q), \theta}}F_{\kappa }^{s}\left(\mathbb{R}^{n}\right)=\left\{g\in  \mathcal{S}^{\prime}\left(\mathbb{R}^{n}\right):\|g\|_{{M\dot{K}_{\lambda, p(\cdot)}^{\eta (\cdot), q), \theta}} F_{\kappa }^{s}}^{(i)}<\infty\right\}, i=1, \ldots, 5
$$

where

\begin{align}
\|g\|_{{M\dot{K}_{\lambda, p(\cdot)}^{\eta (\cdot), q), \theta}} F_{\kappa }^{s}}^{(1)}:= & \left\|\Phi_{0} *g\right\|_{{M\dot{K}_{\lambda, p(\cdot)}^{\eta (\cdot), q), \theta}}} \notag\\
& +\left\|\left(\int_{0}^{1} t^{-s \kappa }\left|\Phi_{t} *g\right|^{\kappa } \frac{\mathrm{d} t}{t}\right)^{1 / \kappa }\right\|_{{M\dot{K}_{\lambda, p(\cdot)}^{\eta (\cdot), q), \theta}}} \label{eq2.6} \\
\|g\|_{{M\dot{K}_{\lambda, p(\cdot)}^{\eta (\cdot), q), \theta}} F_{\kappa }^{s}}^{(2)}:= & \left\|\left(\Phi_{0}^{*} g\right)_{a}\right\|_{{M\dot{K}_{\lambda, p(\cdot)}^{\eta (\cdot), q), \theta}}} \notag\\
& +\left\|\left(\int_{0}^{1}\left[t^{-s}\left(\Phi_{t}^{*} g\right)_{a}\right]^{\kappa } \frac{\mathrm{d} t}{t}\right)^{1 / \kappa }\right\|_{{M\dot{K}_{\lambda, p(\cdot)}^{\eta (\cdot), q), \theta}}} \label{eq2.7}\\
\|g\|_{{M\dot{K}_{\lambda, p(\cdot)}^{\eta (\cdot), q), \theta}} F_{\kappa }^{s}}^{(3)}:= & \left\|\Phi_{0} *g\right\|_{{M\dot{K}_{\lambda, p(\cdot)}^{\eta (\cdot), q), \theta}}} \left\|\left(\int_{0}^{1} t^{-s \kappa }\right.\right. \notag \\
& \left.\left.\times \int_{|z|<t}\left|\left(\Phi_{t} *g\right)(\cdot+z)\right|^{\kappa } \mathrm{d} z \frac{\mathrm{d} t}{t^{n+1}}\right)^{1 / \kappa } \right\|_{{M\dot{K}_{\lambda, p(\cdot)}^{\eta (\cdot), q), \theta}}} \label{eq2.8}\\
\|g\|_{{M\dot{K}_{\lambda, p(\cdot)}^{\eta (\cdot), q), \theta}} F_{\kappa }^{s}}^{(4)}:= & \left\|\left(\sum_{k=0}^{\infty}\left[2^{k s \kappa }\left(\Phi_{k}^{*} g\right)_{a}\right]^{\kappa }\right)^{1 / \kappa }\right\|_{{M\dot{K}_{\lambda, p(\cdot)}^{\eta (\cdot), q), \theta}}} \label{eq2.9} \\
\|g\|_{{M\dot{K}_{\lambda, p(\cdot)}^{\eta (\cdot), q), \theta}}F_{\kappa }^{s}}^{(5)}:= & \left\|\left(\sum_{k=0}^{\infty} 2^{k s \kappa }\left|\Phi_{k} *g\right|^{\kappa }\right)^{1 / \kappa }\right\|_{{M\dot{K}_{\lambda, p(\cdot)}^{\eta (\cdot), q), \theta}}} \label{eq2.10}.
\end{align}

Then, $\left\{\|\cdot\|_{{M\dot{K}_{\lambda, p(\cdot)}^{\eta (\cdot), q), \theta}} F_{\kappa }^{s}}^{(i)}\right\}_{i=1}^{5}$ are equivalent.

\end{theorem}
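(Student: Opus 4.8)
The plan is to prove the theorem by running the Rychkov / Bui--Paluszy\'nski--Taibleson equivalence scheme inside the grand variable Herz--Morrey space; write $X:=M\dot K_{\lambda,p(\cdot)}^{\eta(\cdot),q),\theta}(\mathbb{R}^{n})$ for brevity. Throughout I fix $p_{0}<\min(p_{-},1)$ with $p(\cdot)/p_{0}\in\mathcal{B}(\mathbb{R}^{n})$, so that the vector-valued Hardy--Littlewood maximal inequality of the preceding Corollary is available on $X$ \emph{with the exponent $p(\cdot)/p_{0}$}; this is the only place the hypothesis $p(\cdot)/p_{0}\in\mathcal{B}(\mathbb{R}^{n})$ enters, and one checks that the Herz--Morrey admissibility conditions on $\eta(0),\eta_{\infty},\lambda$ persist for the convexified exponent (with the constants $\omega_{1},\omega_{2}$ adjusted accordingly). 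The equivalences are then obtained as a cycle of $\lesssim$-inequalities among the $\|\cdot\|^{(i)}$. One half of the cycle is immediate: the pointwise domination $|\Phi_{t}*g(x)|\le(\Phi_{t}^{*}g)_{a}(x)$ and $|\Phi_{k}*g(x)|\le(\Phi_{k}^{*}g)_{a}(x)$ yields $\|g\|^{(1)}\lesssim\|g\|^{(2)}$ and $\|g\|^{(4)}\lesssim\|g\|^{(3)}$ in the Besov case (resp. $\|g\|^{(5)}\lesssim\|g\|^{(4)}$ and $\|g\|^{(1)}\lesssim\|g\|^{(2)}$ in the Triebel--Lizorkin case), using only monotonicity of the defining sums and integrals.

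The analytic heart is the reverse bound, which estimates the Peetre maximal function by the plain convolutions. First I would record the discrete Calder\'on-type reproducing identity $g=\sum_{j=0}^{\infty}\Lambda_{j}*\Phi_{j}*g$ in $\mathcal{S}'(\mathbb{R}^{n})$, manufactured in the standard way from the Tauberian conditions \eqref{eq2}--\eqref{eq3} and the vanishing moments \eqref{eq4}; the auxiliary kernels $\Lambda_{0},\Lambda$ inherit Schwartz decay and as many vanishing moments as one needs, the admissible number being governed by $S$ and entering through the assumption $s<S+1$. Convolving this identity with $\Phi_{k}$ and estimating the kernels $\Phi_{k}*\Lambda_{j}$ by their moment and decay properties produces, for every admissible $N$, the key pointwise inequality
\begin{equation*}
\bigl[(\Phi_{k}^{*}g)_{a}(x)\bigr]^{p_{0}}\;\lesssim\;\sum_{j=0}^{\infty}2^{-|k-j|Np_{0}}\,2^{(j-k)_{+}n}\,M\!\left(|\Phi_{j}*g|^{p_{0}}\right)(x),
\end{equation*}
valid once $a>n/p_{0}$, the restriction on $a$ being precisely what renders the Peetre weight $(1+2^{k}|y|)^{-a}$ controllable by the Hardy--Littlewood maximal function of the $p_{0}$-th power. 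This single step is the one I expect to be the main obstacle: one must track the dependence of $N$ on $s$, $S$ and $p_{0}$ so that the geometric factor $2^{-|k-j|Np_{0}}$ dominates both the shift $2^{(j-k)_{+}n}$ and the smoothness weight $2^{sk}$ after the bookkeeping below, which forces a window of the shape $n/p_{0}-s<N<S+1$ that is nonempty exactly because $s<S+1$.

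With this inequality in hand the remaining steps are bookkeeping. In the Triebel--Lizorkin case I multiply by $2^{sk}$, raise to the power $\kappa$, use that $\kappa/p_{0}>1$ (this is where $p_{0}<\kappa$ is needed) to apply the discrete convolution inequality in $\ell_{\kappa/p_{0}}$ with the summable kernel $2^{-|k-j|Np_{0}}2^{(j-k)_{+}n}2^{s(k-j)p_{0}}$, and then invoke the vector-valued maximal inequality on $X$ with exponent $p(\cdot)/p_{0}$ and index $\kappa/p_{0}$ via the $p_{0}$-convexification $\|(\sum_{j}|h_{j}|^{\kappa})^{1/\kappa}\|_{X}=\|(\sum_{j}|h_{j}|^{\kappa})^{p_{0}/\kappa}\|_{X(p(\cdot)/p_{0})}^{1/p_{0}}$; this returns $\|g\|^{(4)}_{F}\lesssim\|g\|^{(5)}_{F}$ and closes the cycle. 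In the Besov case the $\ell_{\kappa}$-summation sits outside the $X$-norm, so I instead use the $p_{0}$-convexity of $X$ to pull the $X$-norm inside the $j$-sum, apply the scalar maximal inequality (exponent $p(\cdot)/p_{0}$) to each level, and finally sum the shifts in $\ell_{\kappa}$ by a discrete Young inequality when $\kappa\ge1$ and by $\kappa$-subadditivity when $\kappa<1$; in both regimes the decisive point is again the summability of $2^{-|k-j|Np_{0}}$.

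Finally, the passage between the continuous quasi-norms ($i=1,2,3$, carrying $\int_{0}^{1}\,\mathrm{d}t/t$) and the discrete ones is routine: the comparison $(\Phi_{t}^{*}g)_{a}\sim(\Phi_{2^{-k}}^{*}g)_{a}$ for $t\in[2^{-k-1},2^{-k}]$ turns $\int_{0}^{1}t^{-s\kappa}(\cdot)^{\kappa}\,\mathrm{d}t/t$ into $\sum_{k}2^{sk\kappa}(\cdot)^{\kappa}$ up to constants, and the local-integral form \eqref{eq2.8} is identified with $\|g\|^{(2)}_{F}$ by the very same $a>n/p_{0}$ maximal-function estimate applied to $\int_{|z|<t}|(\Phi_{t}*g)(\cdot+z)|^{\kappa}\,\mathrm{d}z$. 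The grand parameters $\epsilon,\theta$ play no active role, since the maximal inequality is already established on the grand space; and by the Remark following the sublinear-operator theorem it suffices to treat the homogeneous case, the non-homogeneous one being analogous.
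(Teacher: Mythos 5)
Your proposal follows the same Rychkov--Ullrich scheme as the paper: the easy half by pointwise domination $|\Phi_{t}*g|\le(\Phi_{t}^{*}g)_{a}$, the hard half by a Calder\'on reproducing identity and the key pointwise estimate controlling $[(\Phi_{k}^{*}g)_{a}]^{p_{0}}$ by a geometrically weighted sum of $M(|\Phi_{j}*g|^{p_{0}})$ for $a>n/p_{0}$, followed by the $p_{0}$-convexification of the grand variable Herz--Morrey space, the discrete convolution lemma, and the vector-valued maximal inequality (Theorem 2.8/Corollary 2.10) on the convexified space, with the continuous--discrete passage handled by dyadic slicing of $\int_{0}^{1}\mathrm{d}t/t$. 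This matches the paper's Steps 1--4 (which use $r\in(n/a,\min\{p_{-},\kappa\})$ in place of your $p_{0}$ and Lemmas 3.2, 3.5, 3.6 for the same purposes), and you correctly locate where $p_{0}<\kappa$ and $s<S+1$ enter, so no substantive difference to report.
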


\section{ Proofs of the main results}
 We need the following lemmas to prove our main results,

\begin{lem}
    
 Let $p, q, \lambda,  \eta $, are given in Theorem $2.8$ and $\theta>0$,  then

\begin{align*}
& \|g\|_{M\dot{K}_{\lambda, p(\cdot)}^{\eta (\cdot), q), \theta}} \approx \max \left\{\sup_{\epsilon>0}\sup _{L \leq 0, L \in \mathbb{Z}} 2^{-L \lambda}\left(\epsilon^\theta\sum_{k=-\infty}^{L} 2^{k \eta (0) q(1+\epsilon)}\left\|g {\bf{1}}_{k}\right\|_{p(\cdot)}^{q(1+\epsilon)}\right)^{\frac{1}{q(1+\epsilon)}}\right. \\
& \left.\quad \sup _{L>0, L \in \mathbb{Z}}\sup_{\epsilon>0}\left[2^{-L \lambda}\left(\epsilon^\theta\sum_{k=-\infty}^{-1} 2^{k \eta (0) q(1+\epsilon)}\left\|g {\bf{1}}_{k}\right\|_{p(\cdot)}^{q(1+\epsilon)}\right)^{\frac{1}{q(1+\epsilon)}}+2^{-L \lambda}\left(\epsilon^\theta\sum_{k=0}^{L} 2^{k \eta _{\infty} q(1+\epsilon)}\left\|g {\bf{1}}_{k}\right\|_{p(\cdot)}^{q(1+\epsilon)}\right)^{\frac{1}{q(1+\epsilon)}}\right]\right\}
\end{align*}

\end{lem}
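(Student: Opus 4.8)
The plan is to replace the variable weight $2^{k\eta(\cdot)}$ in each annular piece by a constant weight --- namely $2^{k\eta(0)}$ on the annuli near the origin ($k<0$) and $2^{k\eta_\infty}$ on the annuli far from the origin ($k\ge 0$) --- and then to split the supremum over $L\in\mathbb{Z}$ according to the sign of $L$. Everything reduces to a single pointwise comparison on each annulus $R_k=B_k\setminus B_{k-1}$, on which every $x$ obeys $2^{k-1}\le|x|<2^k$, so that $\bigl|\log_2|x|\bigr|\sim|k|$.

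First I would establish the pointwise equivalence. For $k<0$ we have $|x|<2^k\le\tfrac12$ on $R_k$, so the $\log$-H\"older condition at the origin \eqref{r7} gives $|\eta(x)-\eta(0)|\le C_0/\bigl|\log|x|\bigr|\le C/|k|$, whence $\bigl|k\eta(x)-k\eta(0)\bigr|=|k|\,|\eta(x)-\eta(0)|\le C$ uniformly in $k$ and $x\in R_k$ (the boundedness $\eta\in L^\infty$ covers the finitely many small $|k|$). Exponentiating yields $2^{k\eta(x)}\approx 2^{k\eta(0)}$ with constants independent of $k$ and $x$. The same computation with the decay condition \eqref{r6}, using $|\eta(x)-\eta_\infty|\le C_\infty/\log(e+|x|)\le C/(k+1)$ on $R_k$ for $k\ge 0$, gives $2^{k\eta(x)}\approx 2^{k\eta_\infty}$. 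Since these are uniform pointwise bounds, the lattice property of the variable Lebesgue norm transfers them to
\[
\bigl\|2^{k\eta(\cdot)}g\mathbf{1}_k\bigr\|_{p(\cdot)}\approx 2^{k\eta(0)}\bigl\|g\mathbf{1}_k\bigr\|_{p(\cdot)}\ (k<0),\qquad
\bigl\|2^{k\eta(\cdot)}g\mathbf{1}_k\bigr\|_{p(\cdot)}\approx 2^{k\eta_\infty}\bigl\|g\mathbf{1}_k\bigr\|_{p(\cdot)}\ (k\ge 0),
\]
the case $k=0$ being trivial since the weight is then $1$. The multiplicative constant here is raised to the power $q(1+\epsilon)$ inside the sum and recovered on taking the $(q(1+\epsilon))$-th root, so it stays independent of $\epsilon$.

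Next I would insert these equivalences into the defining norm and split the outer supremum, writing $\|g\|_{M\dot{K}_{\lambda,p(\cdot)}^{\eta(\cdot),q),\theta}}=\max\{A,B\}$ with $A$ the supremum over $L\le 0$ and $B$ the supremum over $L>0$. In $A$ every index obeys $k\le L\le 0$, so the near-origin equivalence turns the summand into $2^{k\eta(0)q(1+\epsilon)}\|g\mathbf{1}_k\|_{p(\cdot)}^{q(1+\epsilon)}$, which is exactly the first term of the asserted maximum. In $B$, for each $L>0$ I split $\sum_{k=-\infty}^{L}=\sum_{k=-\infty}^{-1}+\sum_{k=0}^{L}$, applying the near-origin equivalence to the first block and the near-infinity equivalence to the second. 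The elementary two-sided bound $(a+b)^{1/s}\approx a^{1/s}+b^{1/s}$, whose constants depend only on a positive lower bound for $s$ (here $s=q(1+\epsilon)>q$, so they are uniform in $\epsilon$), then separates the two blocks into a sum of two $(q(1+\epsilon))$-th-root factors, producing the second term of the maximum. Combining, $\max\{A,B\}$ is comparable to the claimed expression.

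The main obstacle is the uniformity of the pointwise comparison: one must check that $|k|\,|\eta(x)-\eta(0)|$ (and its analogue at infinity) stays bounded as $k\to\mp\infty$, so that a single constant governs $2^{k\eta(x)}\approx 2^{k\eta(0)}$ on all annuli at once. This is precisely where the $\log$-H\"older hypotheses \eqref{r7} and \eqref{r6} enter, the $1/|k|$ decay of $|\eta(x)-\eta(0)|$ cancelling the growth of the factor $k$. The remaining care is bookkeeping: every constant must be kept independent of $\epsilon$ --- equivalently, of the exponent $q(1+\epsilon)\in(q,\infty)$ --- so that the equivalences persist under the suprema over $\epsilon>0$ and $L\in\mathbb{Z}$; the $\epsilon$-free weight comparison and the $s$-independence (for $s>q$) of the $(a+b)^{1/s}$ estimate are what guarantee this.
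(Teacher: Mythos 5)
Your proposal is correct and follows essentially the same route as the paper, whose own justification of Lemma 3.1 is precisely the pointwise comparison $2^{k\eta(x)}\approx 2^{k\eta(0)}$ for $k\leq 0$ and $2^{k\eta(x)}\approx 2^{k\eta_\infty}$ for large $k$ derived from the log-H\"older conditions at the origin and at infinity, followed by splitting the supremum over $L$ and the sum over $k$ (the paper only sketches this in two sentences, citing Proposition 3.8 of the Almeida--Drihem reference, whereas you supply the uniformity and bookkeeping details explicitly).
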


Lemma $3.1$ is similar to Proposition $3.8$ in \cite{ref1}. Indeed, when $\eta (\cdot) \in L^{\infty}\left(\mathbb{R}^{n}\right) \cap$ $\mathcal{P}_{0}^{\log }\left(\mathbb{R}^{n}\right) \cap \mathcal{P}_{\infty}^{\log }\left(\mathbb{R}^{n}\right)$, there exist positive constants $C_{1}, C_{2}$ such that if $k \leq 0$ and $x \in D_{k}$ then $C_{1} 2^{k \eta (0)} \leq 2^{k \eta (x)} \leq C_{2} 2^{k \eta (0)}$; if $k>1$ and $x \in D_{k}$ then $C_{1} 2^{k \eta _{\infty}} \leq 2^{k \eta (x)} \leq C_{2} 2^{k \eta _{\infty}}$. Thus, we obtain Lemma $3.1$.

\begin{proof}

Now we will give the proof of Theorem $2.8$. Let $\left(\sum_{k=1}^{\infty}\left|g_{k}\right|^{r}\right)^{\frac{1}{r}} \in {M\dot{K}_{\lambda, p(\cdot)}^{\eta (\cdot), q), \theta}}\left(\mathbb{R}^{n}\right)$. Using the Lemma $3.1$ we get

\begin{align*}
& \left\|\left(\sum_{j=1}^{\infty}\left|T g_{j}\right|^{r}\right)^{\frac{1}{r}}\right\|_{{M\dot{K}_{\lambda, p(\cdot)}^{\eta (\cdot), q), \theta}}} \\
& \approx \max \left\{\sup_{\epsilon>0}\sup _{L \leq 0, L \in \mathbb{Z}} 2^{-L \lambda}\left(\epsilon^\theta \sum_{k=-\infty}^{L} 2^{k \eta (0) q(1+\epsilon)}\left\|\left(\sum_{j=1}^{\infty}\left|T g_{j}\right|^{r}\right)^{\frac{1}{r}} {\bf{1}}_{k}\right\|_{p(\cdot)}^{q(1+\epsilon)}\right)^{\frac{1}{q(1+\epsilon)}}\right. \\
& \sup_{\epsilon>0}\sup _{L>0, L \in \mathbb{Z}}\left[2^{-L \lambda}\left(\epsilon^\theta\sum_{k=-\infty}^{-1} 2^{k \eta (0) q(1+\epsilon)}\left\|\left(\sum_{j=1}^{\infty}\left|T g_{j}\right|^{r}\right)^{\frac{1}{r}} {\bf{1}}_{k}\right\|_{p(\cdot)}^{q(1+\epsilon)}\right)^{\frac{1}{q(1+\epsilon)}}\right. \\
& \left.\left.+2^{-L \lambda}\left(\epsilon^\theta\sum_{k=0}^{L} 2^{k \eta _{\infty} q(1+\epsilon)}\left\|\left(\sum_{j=1}^{\infty}\left|T g_{j}\right|^{r}\right)^{\frac{1}{r}} {\bf{1}}_{k}\right\|_{p(\cdot)}^{q(1+\epsilon)}\right)^{\frac{1}{q(1+\epsilon)}}\right]\right\} \\
& =: \max \left\{E_{T}, F_{T}\right\}.
\end{align*}

We also denote $F_{T}$ by $F_{T}:=\sup \limits _{\epsilon>0}\sup \limits _{L>0, L \in \mathbb{Z}}\left[G_{T}+H_{T}\right]$ with

\begin{align*}
G_{T} & :=2^{-L \lambda}\left\{\epsilon^\theta\sum_{k=-\infty}^{-1} 2^{k \eta (0) q(1+\epsilon)}\left\|\left(\sum_{j=1}^{\infty}\left|T g_{j}\right|^{r}\right)^{\frac{1}{r}} {\bf{1}}_{k}\right\|_{p(\cdot)}^{q(1+\epsilon)}\right\}^{\frac{1}{q(1+\epsilon)}} \\
H_{T} & :=2^{-L \lambda}\left\{\epsilon^\theta\sum_{k=0}^{L} 2^{k \eta _{\infty} q(1+\epsilon)}\left\|\left(\sum_{j=1}^{\infty}\left|T g_{j}\right|^{r}\right)^{\frac{1}{r}} {\bf{1}}_{k}\right\|_{p(\cdot)}^{q(1+\epsilon)}\right\}^{\frac{1}{q(1+\epsilon)}}.
\end{align*}

We need to prove that  $E_{T} \lesssim E_{f}, G_{T} \lesssim G_{f}$ and $H_{T} \lesssim H_{f}$ respectively, where

\begin{align*}
& E_{f}:=\sup_{\epsilon>0}\sup _{L \leqslant 0, L \in \mathbb{Z}} 2^{-L \lambda}\left\{\epsilon^\theta\sum_{k=-\infty}^{L} 2^{\eta (0) q(1+\epsilon) k}\left\|{\bf{1}}_{k}\left(\sum_{j=1}^{\infty}\left|g_{j}\right|^{r}\right)^{\frac{1}{r}}\right\|_{p(\cdot)}^{q(1+\epsilon)}\right\}^{\frac{1}{q(1+\epsilon)}} \\
& G_{f}:=2^{-L \lambda}\left\{\epsilon^\theta\sum_{k=-\infty}^{-1} 2^{k \eta (0) q(1+\epsilon)}\left\|\left(\sum_{j=1}^{\infty}\left|g_{j}\right|^{r}\right)^{\frac{1}{r}} {\bf{1}}_{k}\right\|_{p(\cdot)}^{q(1+\epsilon)}\right\}^{\frac{1}{q(1+\epsilon)}} \\
& H_{f}:=2^{-L \lambda}\left\{\epsilon^\theta\sum_{k=0}^{L} 2^{k \eta _{\infty} q(1+\epsilon)}\left\|\left(\sum_{j=1}^{\infty}\left|g_{j}\right|^{r}\right)^{\frac{1}{r}} {\bf{1}}_{k}\right\|_{p(\cdot)}^{q(1+\epsilon)}\right\}^{\frac{1}{q(1+\epsilon)}}.
\end{align*}

Hence we get  $E_{T} \lesssim E_{f}$ and $F_{T} \lesssim F_{g}$ where $F_{g}$ denote  $\sup \limits_{\epsilon>0}\sup \limits _{L>0, L \in \mathbb{Z}}\left[G_{f}+H_{f}\right]$. From above all and using Lemma $3.1$ again, we have

\begin{align*}
\left\|\left(\sum_{j=1}^{\infty}\left|T g_{j}\right|^{r}\right)^{\frac{1}{r}}\right\|_{M\dot{K}_{\lambda, p(\cdot)}^{\eta (\cdot), q), \theta}} & \approx \max \left\{E_{T}, F_{T}\right\} \\
& \lesssim \max \left\{E_{g}, F_{g}\right\} \\
& \approx\left\|\left(\sum_{j=1}^{\infty}\left|g_{j}\right|^{r}\right)^{\frac{1}{r}}\right\|_{{M\dot{K}_{\lambda, p(\cdot)}^{\eta (\cdot), q), \theta}}}.
\end{align*}

Estimate of $G_{T} \lesssim G_{f}$ is similar to $E_{\mathcal{M}} \lesssim E_{f}$ so omit the details.

 By using the size condition and   Minkowski's inequality, for $E_{T}$ we get

\begin{align*}
E_{T}= & \sup_{\epsilon>0}\sup _{L \leqslant 0, L \in \mathbb{Z}} 2^{-L \lambda}\left\{\epsilon^\theta \sum_{k=-\infty}^{L} 2^{\eta (0) q (1+\epsilon)k}\left\|{\bf{1}}_{k}\left(\sum_{j=1}^{\infty}\left|T g_{j}\right|^{r}\right)^{\frac{1}{r}}\right\|_{p(\cdot)}^{q(1+\epsilon)}\right\}^{\frac{1}{q(1+\epsilon)}} \\
= & \sup_{\epsilon>0}\sup _{L \leqslant 0, L \in \mathbb{Z}} 2^{-L \lambda}\left\{\epsilon^\theta\sum_{k=-\infty}^{L} 2^{\eta (0) q (1+\epsilon)k}\left\|{\bf{1}}_{k}\left(\sum_{j=1}^{\infty}\left|T \sum_{i=-\infty}^{\infty} g_{j}^{i}\right|^{r}\right)^{\frac{1}{r}}\right\|_{p(\cdot)}^{q(1+\epsilon)}\right\}^{\frac{1}{q(1+\epsilon)}} \\
\leqslant & \sup_{\epsilon>0} \sup _{L \leqslant 0, L \in \mathbb{Z}} 2^{-L \lambda}\left\{\epsilon^\theta\sum_{k=-\infty}^{L} 2^{\eta (0) q (1+\epsilon)k}\left\|{\bf{1}}_{k} \sum_{i=-\infty}^{\infty}\left(\sum_{j=1}^{\infty}\left|T g_{j}^{i}\right|^{r}\right)^{\frac{1}{r}}\right\|^{q(1+\epsilon)}\right\}_{p(\cdot)}^{\frac{1}{q(1+\epsilon)}} \\
& \lesssim \sup_{\epsilon>0}\sup _{L \leqslant 0, L \in \mathbb{Z}} 2^{-L \lambda}\left\{\epsilon^\theta\sum_{k=-\infty}^{L} 2^{\eta (0) q (1+\epsilon)k}\left\|{\bf{1}}_{k} \sum_{i=-\infty}^{k-2}\left(\sum_{j=1}^{\infty}\left|T g_{j}^{i}\right|^{r}\right)^{\frac{1}{r}}\right\|_{p(\cdot)}^{q(1+\epsilon)}\right\}^{\frac{1}{q(1+\epsilon)}} \\
& +\sup_{\epsilon>0}\sup _{L \leqslant 0, L \in \mathbb{Z}} 2^{-L \lambda}\left\{\epsilon^\theta\sum_{k=-\infty}^{L} 2^{\eta (0) q(1+\epsilon) k}\left\|{\bf{1}}_{k} \sum_{i=k-1}^{k+1}\left(\sum_{j=1}^{\infty}\left|T g_{j}^{i}\right|^{r}\right)^{\frac{1}{r}}\right\|_{p(\cdot)}^{q(1+\epsilon)}\right\}^{\frac{1}{q(1+\epsilon)}} \\
& +\sup_{\epsilon>0}\sup _{L \leqslant 0, L \in \mathbb{Z}} 2^{-L \lambda}\left\{\epsilon^\theta\sum_{k=-\infty}^{L} 2^{\eta (0) q (1+\epsilon)k}\left\|{\bf{1}}_{k} \sum_{i=k+2}^{\infty}\left(\sum_{j=1}^{\infty}\left|T g_{j}^{i}\right|^{r}\right)^{\frac{1}{r}}\right\|_{p(\cdot)}^{q(1+\epsilon)}\right\}^{\frac{1}{q(1+\epsilon)}} \\
=: & E_{T}^{1}+E_{T}^{2}+E_{T}^{3} .
\end{align*}

By the same way we consider $H_{T}$.

\begin{align*}
H_{T} \lesssim & 2^{-L \lambda}\left\{\epsilon^\theta\sum_{k=0}^{L} 2^{\eta _{\infty} q(1+\epsilon) k}\left\|{\bf{1}}_{k} \sum_{i=-\infty}^{k-2}\left(\sum_{j=1}^{\infty}\left|T g_{j}^{i}\right|^{r}\right)^{\frac{1}{r}}\right\|_{p(\cdot)}^{q}\right\} ^{\frac{1}{q(1+\epsilon)}}\\
& +2^{-L \lambda}\left\{\epsilon^\theta\sum_{k=0}^{L} 2^{\eta _{\infty} q(1+\epsilon) k}\left\|{\bf{1}}_{k} \sum_{i=k-1}^{k+1}\left(\sum_{j=1}^{\infty}\left|T g_{j}^{i}\right|^{r}\right)^{\frac{1}{r}}\right\|_{p(\cdot)}^{q(1+\epsilon)}\right\}^{\frac{1}{q(1+\epsilon)}} \\
& +2^{-L \lambda}\left\{\epsilon^\theta\sum_{k=0}^{L} 2^{\eta _{\infty} q(1+\epsilon) k}\left\|{\bf{1}}_{k} \sum_{i=k+2}^{\infty}\left(\sum_{j=1}^{\infty}\left|T g_{j}^{i}\right|^{r}\right)^{\frac{1}{r}}\right\|_{p(\cdot)}^{q(1+\epsilon)}\right\} ^{\frac{1}{q(1+\epsilon)}}\\
=: & H_{T}^{1}+H_{T}^{2}+H_{T}^{3} .
\end{align*}

Secondly, we will prove $E_{T}^{i}$ and $H_{T}^{i}, i=1,2,3$.

{\bf{Step 1.}} For  $E_{T}^{2}$, we have

\begin{align*}
& E_{T}^{2} \lesssim \sup_{\epsilon>0}\sup _{L \leqslant 0, L \in \mathbb{Z}} 2^{-L \lambda}\left\{\epsilon^\theta\sum_{k=-\infty}^{L} 2^{\eta (0) q(1+\epsilon) k} \sum_{i=k-1}^{k+1}\left\|\left(\sum_{j=1}^{\infty}\left|g_{j}^{i}\right|^{r}\right)^{\frac{1}{r}}\right\|_{p(\cdot)}^{q(1+\epsilon)}\right\} ^{\frac{1}{q(1+\epsilon)}}\\
& = \sup_{\epsilon>0}\sup _{L \leqslant 0, L \in \mathbb{Z}} 2^{-L \lambda}\left\{\epsilon^\theta\sum_{k=-\infty}^{L} 2^{\eta (0) q(1+\epsilon) k} \sum_{i=k-1}^{k+1}\left\|{\bf{1}}_{i}\left(\sum_{j=1}^{\infty}\left|g_{j}\right|^{r}\right)^{\frac{1}{r}}\right\|_{p(\cdot)}^{q(1+\epsilon)}\right\}^{\frac{1}{q(1+\epsilon)}}\\
& \lesssim \sup_{\epsilon>0} \sup _{L \leqslant 0, L \in \mathbb{Z}} 2^{-L \lambda}\left\{\epsilon^\theta\sum_{k=-\infty}^{L} 2^{\eta (0) q (1+\epsilon)k}\left\|{\bf{1}}_{k-1}\left(\sum_{j=1}^{\infty}\left|g_{j}\right|^{r}\right)^{\frac{1}{r}}\right\|_{p(\cdot)}^{q(1+\epsilon)}\right\}^{\frac{1}{q(1+\epsilon)}} \\
& + \sup_{\epsilon>0}\sup _{L \leqslant 0, L \in \mathbb{Z}} 2^{-L \lambda}\left\{\epsilon^\theta \sum_{k=-\infty}^{L} 2^{\eta (0) q (1+\epsilon)k}\left\|{\bf{1}}_{k}\left(\sum_{j=1}^{\infty}\left|g_{j}\right|^{r}\right)^{\frac{1}{r}}\right\|_{p(\cdot)}^{q(1+\epsilon)}\right\}^{\frac{1}{q(1+\epsilon)}} \\
& + \sup_{\epsilon>0}\sup _{L \leqslant 0, L \in \mathbb{Z}} 2^{-L \lambda}\left\{\epsilon^\theta\sum_{k=-\infty}^{L} 2^{\eta (0) q (1+\epsilon)k}\left\|{\bf{1}}_{k+1}\left(\sum_{j=1}^{\infty}\left|g_{j}\right|^{r}\right)^{\frac{1}{r}}\right\|_{p(\cdot)}^{q(1+\epsilon)}\right\}^{\frac{1}{q(1+\epsilon)}} \\
& \lesssim  \sup_{\epsilon>0} \sup _{L \leqslant 0, L \in \mathbb{Z}} 2^{-L \lambda}\left\{\epsilon^\theta\sum_{k=-\infty}^{L} 2^{\eta (0) q(1+\epsilon) k}\left\|{\bf{1}}_{k}\left(\sum_{j=1}^{\infty}\left|g_{j}\right|^{r}\right)^{\frac{1}{r}}\right\|_{p(\cdot)}^{q(1+\epsilon)}\right\} ^{\frac{1}{q(1+\epsilon)}}\\
& =: E_{f}.
\end{align*}

Then turn to $H_{T}^{2}$, similarly, we have

\begin{align*}
H_{T}^{2} & \lesssim 2^{-L \lambda}\left\{\epsilon^\theta\sum_{k=0}^{L} 2^{\eta _{\infty} q (1+\epsilon)k}\left\|{\bf{1}}_{k-1}\left(\sum_{j=1}^{\infty}\left|g_{j}\right|^{r}\right)^{\frac{1}{r}}\right\|_{p(\cdot)}^{q(1+\epsilon)}\right\} ^{\frac{1}{q(1+\epsilon)}}\\
& +2^{-L \lambda}\left\{\epsilon^\theta\sum_{k=0}^{L} 2^{\eta _{\infty} q(1+\epsilon) k}\left\|{\bf{1}}_{k}\left(\sum_{j=1}^{\infty}\left|g_{j}\right|^{r}\right)^{\frac{1}{r}}\right\|_{p(\cdot)}^{q(1+\epsilon)}\right\} ^{\frac{1}{q(1+\epsilon)}}\\
& +2^{-L \lambda}\left\{\epsilon^\theta\sum_{k=0}^{L} 2^{\eta _{\infty} q(1+\epsilon) k}\left\|{\bf{1}}_{k+1}\left(\sum_{j=1}^{\infty}\left|g_{j}\right|^{r}\right)^{\frac{1}{r}}\right\|_{p(\cdot)}^{q(1+\epsilon)}\right\} ^{\frac{1}{q(1+\epsilon)}}\\
& \lesssim H_{f}.
\end{align*}

{\bf{Step 2.}} 

Let $\forall i \leqslant k-2, x \in R_{k}, 1<r<\infty$, by the size condition and the generalized Minkowski's inequality, we obtain

\begin{align*}
\left(\sum_{j=1}^{\infty} T^{r}\left(g_{j}^{i}\right)(x)\right)^{\frac{1}{r}} & \lesssim\left[\sum_{j=1}^{\infty}\left(2^{-k n} \int_{\mathbb{R}^{n}}\left|g_{j}^{i}(y)\right| \mathrm{d} y\right)^{r}\right]^{\frac{1}{r}} \\
& \lesssim 2^{-k n} \int_{\mathbb{R}^{n}}\left(\sum_{j=1}^{\infty}\left|g_{j}^{i}\right|^{r}\right)^{\frac{1}{r}} \mathrm{~d} y.
\end{align*}

By Hölder's inequality we get

\begin{align}
E_{T}^{1} & \lesssim  \sup_{\epsilon>0}\sup _{L \leqslant 0, L \in \mathbb{Z}} 2^{-L \lambda}\left\{\epsilon^\theta \sum_{k=-\infty}^{L} 2^{\eta (0) q(1+\epsilon) k}\left\|{\bf{1}}_{k} \sum_{i=-\infty}^{k-2} 2^{-k n} \int_{\mathbb{R}^{n}}\left(\sum_{j=1}^{\infty}\left|g_{j}^{i}\right|^{r}\right)^{\frac{1}{r}} \mathrm{~d} y\right\|_{p(\cdot)}^{q(1+\epsilon)}\right\}^{\frac{1}{q(1+\epsilon)}} \notag\\
& =\sup_{\epsilon>0}\sup _{L \leqslant 0, L \in \mathbb{Z}} 2^{-L \lambda}\left\{\epsilon^\theta \sum_{k=-\infty}^{L} 2^{\eta (0) q (1+\epsilon)k}\left\|{\bf{1}}_{k}\right\|_{p(\cdot)}^{q(1+\epsilon)}\left(\sum_{i=-\infty}^{k-2} 2^{-k n} \int_{\mathbb{R}^{n}}\left(\sum_{j=1}^{\infty}\left|g_{j}^{i}\right|^{r}\right)^{\frac{1}{r}} \mathrm{~d} y\right)^{q(1+\epsilon)}\right\}^{\frac{1}{q(1+\epsilon)}} \notag\\
& \lesssim \sup_{\epsilon>0}\sup _{L \leqslant 0, L \in \mathbb{Z}} 2^{-L \lambda}\left\{\epsilon^\theta \sum_{k=-\infty}^{L} 2^{\eta (0) q (1+\epsilon)k}\left\|{\bf{1}}_{k}\right\|_{p(\cdot)}^{q(1+\epsilon)}\left(\sum_{i=-\infty}^{k-2} 2^{-k n}\left\|\left(\sum_{j=1}^{\infty}\left|g_{j}\right|^{r}\right)^{\frac{1}{r}} {\bf{1}}_{i}\right\|_{p(\cdot)}\left\|{\bf{1}}_{i}\right\|_{L^{p^{\prime}(\cdot)}}\right)^{q(1+\epsilon)}\right\}^{\frac{1}{q(1+\epsilon)}}\label{eq3.1}.
\end{align}

On the other hand, by using Lemma $2.4$, we have

\begin{equation}\label{eq3.2}
2^{-k n}\left\|{\bf{1}}_{k}\right\|_{p(\cdot)}\left\|{\bf{1}}_{i}\right\|_{L^{p^{\prime}(\cdot)}} \lesssim 2^{-k n}2^{\frac{kn}{p(0)}}2^{\frac{in}{p^\prime(0)}} \lesssim 2^{\frac{(i-k)n}{p^\prime(0)}}.
\end{equation}

We put \eqref{eq3.2} into \eqref{eq3.1} and get

\begin{align}
E_{T}^{1} & \lesssim \sup_{\epsilon>0}\sup _{L \leqslant 0, L \in \mathbb{Z}} 2^{-L \lambda}\left\{\epsilon^\theta\sum_{k=-\infty}^{L} 2^{\eta (0) k q(1+\epsilon)}\left(\sum_{i=-\infty}^{k-2}\left\|\left(\sum_{j=1}^{\infty}\left|g_{j}\right|^{r}\right)^{\frac{1}{r}} {\bf{1}}_{i}\right\|_{p(\cdot)} 2^{\frac{(i-k)n}{p^\prime(0)}}\right)^{q(1+\epsilon)}\right\}^{\frac{1}{q(1+\epsilon)}} \notag\\
& =\sup_{\epsilon>0}\sup _{L \leqslant 0, L \in \mathbb{Z}} 2^{-L \lambda}\left\{\epsilon^\theta\sum_{k=-\infty}^{L}\left(\sum_{i=-\infty}^{k-2} 2^{\eta (0) k}\left\|\left(\sum_{j=1}^{\infty}\left|g_{j}\right|^{r}\right)^{\frac{1}{r}} {\bf{1}}_{i}\right\|_{p(\cdot)} 2^{\frac{(i-k)n}{p^\prime(0)}}\right)^{q(1+\epsilon)}\right\}^{\frac{1}{q(1+\epsilon)}} \notag\\
& =\sup_{\epsilon>0}\sup _{L \leqslant 0, L \in \mathbb{Z}} 2^{-L \lambda}\left\{\epsilon^\theta\sum_{k=-\infty}^{L}\left(\sum_{i=-\infty}^{k-2} 2^{\eta (0) i}\left\|\left(\sum_{j=1}^{\infty}\left|g_{j}\right|^{r}\right)^{\frac{1}{r}} {\bf{1}}_{i}\right\|_{p(\cdot)} 2^{b(i-k)}\right)^{q(1+\epsilon)}\right\}^{\frac{1}{q(1+\epsilon)}}, \label{eq3.3}
\end{align}

here $b:=\frac{n}{p^\prime(0)}-\eta (0)>0$.

Let $1<q(1+\epsilon)<\infty$, then the Hölder's inequality yields

\begin{align*}
E_{T}^{1} & \lesssim \sup_{\epsilon>0} \sup _{L \leqslant 0, L \in \mathbb{Z}} 2^{-L \lambda}\left\{\epsilon^\theta \sum_{k=-\infty}^{L}\left(\sum_{i=-\infty}^{k-2} 2^{\eta (0) i q(1+\epsilon)}\left\|\left(\sum_{j=1}^{\infty}\left|g_{j}\right|^{r}\right)^{\frac{1}{r}} {\bf{1}}_{i}\right\|_{p(\cdot)}^{q(1+\epsilon)} 2^{\frac{b q(1+\epsilon)(i-k)}{2}}\right)\right. \\
& \left.\times\left(\sum_{i=-\infty}^{k-2} 2^{\frac{b (q(1+\epsilon))^{\prime}(i-k)}{2}}\right)^{\frac{q(1+\epsilon)}{(q(1+\epsilon))^{\prime}}}\right\}^{\frac{1}{q(1+\epsilon)}} \\
& \lesssim \sup_{\epsilon>0} \sup _{L \leqslant 0, L \in \mathbb{Z}} 2^{-L \lambda}\left\{\epsilon^\theta\sum_{k=-\infty}^{L} \sum_{i=-\infty}^{k-2} 2^{\eta (0) i q(1+\epsilon)}\left\|\left(\sum_{j=1}^{\infty}\left|g_{j}\right|^{r}\right)^{\frac{1}{r}} {\bf{1}}_{i}\right\|_{p(\cdot)}^{q(1+\epsilon)} 2^{\frac{b q(1+\epsilon)(i-k)}{2}}\right\}^{\frac{1}{q}} \\
& =\sup_{\epsilon>0}\sup _{L \leqslant 0, L \in \mathbb{Z}} 2^{-L \lambda}\left\{\epsilon^\theta\sum_{i=-\infty}^{L-2} 2^{\eta (0) i q(1+\epsilon)}\left\|\left(\sum_{j=1}^{\infty}\left|g_{j}\right|^{r}\right)^{\frac{1}{r}} {\bf{1}}_{i}\right\|_{p(\cdot)}^{q(1+\epsilon)} \sum_{k=i+2}^{L} 2^{\frac{b q(1+\epsilon)(i-k)}{2}}\right\}^{\frac{1}{q(1+\epsilon)}} \\
& \lesssim \sup_{\epsilon>0}\sup _{L \leqslant 0, L \in \mathbb{Z}} 2^{-L \lambda}\left\{\epsilon^\theta\sum_{i=-\infty}^{L-2} 2^{\eta (0) i q(1+\epsilon)}\left\|\left(\sum_{j=1}^{\infty}\left|g_{j}\right|^{r}\right)^{\frac{1}{r}} {\bf{1}}_{i}\right\|_{p(\cdot)}^{q(1+\epsilon)}\right\}^{\frac{1}{q(1+\epsilon)}} \\
& \lesssim E_{f} .
\end{align*}

If $0<q(1+\epsilon) \leqslant 1$, then we have

\begin{equation}\label{eq3.4}
\left(\sum_{i=1}^{\infty} a_{i}\right)^{q(1+\epsilon)} \leqslant \sum_{i=1}^{\infty} a_{i}^{q(1+\epsilon)}, \quad i \in \mathbb{N}, a_{i} \geqslant 0, 
\end{equation}

and obtain

\begin{align*}
E_{T}^{1} & \lesssim \sup_{\epsilon>0}\sup _{L \leqslant 0, L \in \mathbb{Z}} 2^{-L \lambda}\left\{\epsilon^\theta\sum_{k=-\infty}^{L} \sum_{i=-\infty}^{k-2} 2^{\eta (0) q(1+\epsilon) i}\left\|\left(\sum_{j=1}^{\infty}\left|g_{j}\right|^{r}\right)^{\frac{1}{r}} {\bf{1}}_{i}\right\|_{p(\cdot)}^{q(1+\epsilon)} 2^{b q(1+\epsilon)(i-k)}\right\}^{\frac{1}{q(1+\epsilon)}} \\
& = \sup_{\epsilon>0}\sup _{L \leqslant 0, L \in \mathbb{Z}} 2^{-L \lambda}\left\{\epsilon^\theta\sum_{i=-\infty}^{L-2} 2^{\eta (0) i q(1+\epsilon)}\left\|\left(\sum_{j=1}^{\infty}\left|g_{j}\right|^{r}\right)^{\frac{1}{r}} {\bf{1}}_{i}\right\|_{p(\cdot)}^{q(1+\epsilon)} \sum_{k=i+2}^{L} 2^{b q(1+\epsilon)(i-k)}\right\}^{\frac{1}{q(1+\epsilon)}} \\
& \lesssim  \sup_{\epsilon>0} \sup _{L \leqslant 0, L \in \mathbb{Z}} 2^{-L \lambda}\left\{\epsilon^\theta\sum_{i=-\infty}^{L-2} 2^{\eta (0) i q(1+\epsilon)}\left\|\left(\sum_{j=1}^{\infty}\left|g_{j}\right|^{r}\right)^{\frac{1}{r}} {\bf{1}}_{i}\right\|_{p(\cdot)}^{q(1+\epsilon)}\right\}^{\frac{1}{q(1+\epsilon)}} \\
& \lesssim E_{f} .
\end{align*}

Similarly, by using Lemmas $2.5$ and $3.2$, we have
\begin{equation*}
2^{-k n}\left\|{\bf{1}}_{k}\right\|_{p(\cdot)}\left\|{\bf{1}}_{i}\right\|_{L^{p^{\prime}(\cdot)}} \lesssim 2^{-k n}\left\|{\bf{1}}_{B_i}\right\|_{L^{p^\prime(\cdot)}}\left|B_k\right|\left\|{\bf{1}}_{B_k}\right\|^{-1}_{L^{p^{\prime}(\cdot)}} \lesssim 2^{n \omega_2(i-k)}.
\end{equation*}

$$
H_{T}^{1} \lesssim 2^{-L \lambda}\left\{\epsilon^\theta\sum_{k=0}^{\infty}\left(\sum_{i=-\infty}^{k-2} 2^{\eta _{\infty} i}\left\|\left(\sum_{j=1}^{\infty}\left|g_{j}\right|^{r}\right)^{\frac{1}{r}} {\bf{1}}_{i}\right\|_{p(\cdot)} 2^{b_{1}(i-k)}\right)^{q(1+\epsilon)}\right\}^{\frac{1}{q(1+\epsilon)}}
$$

here $b_{1}=n \omega_2-\eta _{\infty}>0$.

For $1<q(1+\epsilon)<\infty$, using Hölder's inequality, we have

$$
\begin{aligned}
H_{T}^{1} & \lesssim 2^{-L \lambda}\left\{\epsilon^\theta\sum_{k=0}^{L} \sum_{i=-\infty}^{k-2} 2^{\eta _{\infty} i q(1+\epsilon)}\left\|\left(\sum_{j=1}^{\infty}\left|g_{j}\right|^{r}\right)^{\frac{1}{r}} {\bf{1}}_{i}\right\|_{p(\cdot)}^{q(1+\epsilon)} 2^{\frac{b_{q}(i-k)}{2}}\left(\sum_{i=-\infty}^{k-2} 2^{\frac{b_{1} (q(1+\epsilon))^{\prime}(i-k)}{2}}\right)^{\frac{q(1+\epsilon)}{q(1+\epsilon)^{\prime}}}\right\}^{\frac{1}{q(1+\epsilon)}} \\
& \lesssim 2^{-L \lambda}\left\{\epsilon^\theta\sum_{k=0}^{L} \sum_{i=-\infty}^{k-2} 2^{\eta _{\infty} i q(1+\epsilon)}\left\|\left(\sum_{j=1}^{\infty}\left|g_{j}\right|^{r}\right)^{\frac{1}{r}} {\bf{1}}_{i}\right\|_{p(\cdot)}^{q(1+\epsilon)} 2^{\frac{b_{1} q(1+\epsilon)(i-k)}{2}}\right\}^{\frac{1}{q(1+\epsilon)}} \\
& \leqslant 2^{-L \lambda}\left\{\epsilon^\theta\sum_{k=0}^{L} \sum_{i=-2}^{k-2} 2^{\eta _{\infty} i q(1+\epsilon)}\left\|\left(\sum_{j=1}^{\infty}\left|g_{j}\right|^{r}\right)^{\frac{1}{r}} {\bf{1}}_{i}\right\|_{p(\cdot)}^{q} 2^{\frac{b_{1}q(1+\epsilon)(i-k)}{2}}\right\}^{\frac{1}{q(1+\epsilon)}} \\
& +2^{-L \lambda}\left\{\epsilon^\theta\sum_{k=0}^{L} \sum_{i=-\infty}^{-3} 2^{\eta _{\infty} i q(1+\epsilon)}\left\|\left(\sum_{j=1}^{\infty}\left|g_{j}\right|^{r}\right)^{\frac{1}{r}} {\bf{1}}_{i}\right\|_{p(\cdot)}^{q(1+\epsilon)} 2^{\frac{b_{1} q(1+\epsilon)(i-k)}{2}}\right\} ^{\frac{1}{q(1+\epsilon)}}\\
= & I_{1}+I_{2}.
\end{aligned}
$$

Now we consider $I_{1}$ and $I_{2}$ respectively. Due to $b_{1}>0$, we have

\begin{align*}
I_{1} & =2^{-L \lambda}\left\{\epsilon^\theta\sum_{i=-2}^{L-2} 2^{\eta _{\infty} i q(1+\epsilon)}\left\|\left(\sum_{j=1}^{\infty}\left|g_{j}\right|^{r}\right)^{\frac{1}{r}} {\bf{1}}_{i}\right\|_{p(\cdot)}^{q(1+\epsilon)} \sum_{k=i+2}^{L} 2^{\frac{b_{1} q(1+\epsilon)(i-k)}{2}}\right\}^{\frac{1}{q(1+\epsilon)}} \\
& \lesssim 2^{-L \lambda}\left\{\epsilon^\theta\sum_{i=-2}^{L-2} 2^{\eta _{\infty} i q(1+\epsilon)}\left\|\left(\sum_{j=1}^{\infty}\left|g_{j}\right|^{r}\right)^{\frac{1}{r}} {\bf{1}}_{i}\right\|_{p(\cdot)}^{q(1+\epsilon)}\right\}^{\frac{1}{q(1+\epsilon)}} \\
& \lesssim H_{f}.
\end{align*}

Because $L>0, b_{1}>0$ and $\lambda \geqslant 0$, we obtain

\begin{align*}
I_{2} & \lesssim 2^{-L \lambda}\left\{\epsilon^\theta \sum_{k=0}^{L} \sum_{i=-\infty}^{-3} 2^{\frac{b_{1} q(1+\epsilon)(i-k)}{2}}\left[\sum_{m=\infty}^{i} 2^{\eta _{\infty} m q(1+\epsilon)}\left\|\left(\sum_{j=1}^{\infty}\left|g_{j}\right|^{r}\right)^{\frac{1}{r}} {\bf{1}}_{m}\right\|_{p(\cdot)}^{q(1+\epsilon)}\right]\right\}^{\frac{1}{q(1+\epsilon)}} \\
& =2^{-L \lambda}\left\{\epsilon^\theta \sum_{k=0}^{L} \sum_{i=-\infty}^{-3} 2^{\frac{b_{1} q(1+\epsilon)(i-k)}{2}} \cdot 2^{i q(1+\epsilon) \lambda} H_{f}^{q(1+\epsilon)}\right\}^{\frac{1}{q(1+\epsilon)}} \\
& =2^{-L \lambda}\left\{\epsilon^\theta\left(\sum_{k=0}^{L} 2^{-k b_{1} q(1+\epsilon) / 2}\right)\left(\sum_{i=-\infty}^{-3} 2^{\left(b_{1} / 2+\lambda\right) q(1+\epsilon) i}\right) H_{f}^{q(1+\epsilon)}\right\}^{\frac{1}{q(1+\epsilon)}} \\
& \lesssim 2^{-L \lambda}\left\{\epsilon^\theta \left(\sum_{k=0}^{L} 2^{-k b_{1} q(1+\epsilon) / 2}\right)\left(\sum_{i=-\infty}^{L} 2^{\left(b_{1} / 2+\lambda\right) q(1+\epsilon) i}\right) H_{f}^{q(1+\epsilon)}\right\}^{\frac{1}{q(1+\epsilon)}} \\
& \lesssim 2^{-L \lambda} 2^{-L b_{1} / 2} 2^{\left(b_{1} / 2+\lambda\right) L} H_{f} \\
& =H_{f}.
\end{align*}

For $0<q(1+\epsilon) \leqslant 1$, using \eqref{eq3.4}

\begin{align*}
H_{T}^{1} & \lesssim 2^{-L \lambda}\left\{\epsilon^\theta \sum_{k=0}^{\infty} \sum_{i=-\infty}^{k-2} 2^{\eta _{\infty} i q(1+\epsilon)}\left\|\left(\sum_{j=1}^{\infty}\left|g_{j}\right|^{r}\right)^{\frac{1}{r}} {\bf{1}}_{i}\right\|_{p(\cdot)}^{q(1+\epsilon)} 2^{b_{1}(i-k)}\right\}^{\frac{1}{q(1+\epsilon)}} \\
& \lesssim 2^{-L \lambda}\left\{\epsilon^\theta \sum_{k=0}^{L} \sum_{i=-2}^{k-2} 2^{\eta _{\infty} i q(1+\epsilon)}\left\|\left(\sum_{j=1}^{\infty}\left|g_{j}\right|^{r}\right)^{\frac{1}{r}} {\bf{1}}_{i}\right\|_{p(\cdot)}^{q(1+\epsilon)} 2^{b_{1}(i-k)}\right\}^{\frac{1}{q(1+\epsilon)}} \\
& +2^{-L \lambda}\left\{\epsilon^\theta\sum_{k=0}^{L} \sum_{i=-\infty}^{-3} 2^{\eta _{\infty} i q(1+\epsilon)}\left\|\left(\sum_{j=1}^{\infty}\left|g_{j}\right|^{r}\right)^{\frac{1}{r}} {\bf{1}}_{i}\right\|_{p(\cdot)}^{q(1+\epsilon)} 2^{b_{1}(i-k)}\right\} ^{\frac{1}{q(1+\epsilon)}}\\
= & : J_{1}+J_{2}.
\end{align*}

Similar to Step $2$ $H_{T}^{2} \lesssim H_{f}$ is also true for $0<q(1+\epsilon) \leqslant 1$. 

{\bf{Step 3.}} Let  $\forall i \geq k+2, x \in R_{k}$, then we get

\begin{align*}
\left(\sum_{j=1}^{\infty} T^{r}\left(g_{j}^{i}\right)(x)\right)^{\frac{1}{r}} & \lesssim\left[\sum_{j=1}^{\infty}\left(2^{-i n} \int_{\mathbb{R}^{n}}\left|g_{j}^{i}(y)\right| \mathrm{d} y\right)^{r}\right]^{\frac{1}{r}} \\
& =2^{-i n}\left(\sum_{j=1}^{\infty}\left(\int_{\mathbb{R}^{n}}\left|g_{j}^{i}(y)\right| \mathrm{d} y\right)^{r}\right)^{\frac{1}{r}} \\
& \lesssim 2^{-i n} \int_{\mathbb{R}^{n}}\left(\sum_{j=1}^{\infty}\left|g_{j}^{i}\right|^{r}\right)^{\frac{1}{r}} \mathrm{~d} y.
\end{align*}

By Hölder's inequality we get

\begin{align}
E_{T}^{3} & \lesssim \sup_{\epsilon>0}\sup _{L \leqslant 0, L \in \mathbb{Z}} 2^{-L \lambda}\left\{\epsilon^\theta\sum_{k=-\infty}^{L} 2^{\eta (0) q(1+\epsilon) k}\left\|{\bf{1}}_{k} \sum_{i=k+2}^{\infty} 2^{-i n} \int_{\mathbb{R}^{n}}\left(\sum_{j=1}^{\infty}\left|g_{j}^{i}\right|^{r}\right)^{\frac{1}{r}} \mathrm{~d} y\right\|_{L^{p (\cdot)}}^{q(1+\epsilon)}\right\}^{\frac{1}{q(1+\epsilon)}}\notag \\
= & \sup_{\epsilon>0}\sup _{L \leqslant 0, L \in \mathbb{Z}} 2^{-L \lambda}\left\{\epsilon^\theta\sum_{k=-\infty}^{L} 2^{\eta (0) q(1+\epsilon) k}\left\|{\bf{1}}_{k}\right\|_{p(\cdot)}^{q(1+\epsilon)}\left(\sum_{i=k+2}^{\infty} 2^{-i n} \int_{\mathbb{R}^{n}}\left(\sum_{j=1}^{\infty}\left|g_{j}^{i}\right|^{r}\right)^{\frac{1}{r}} \mathrm{~d} y\right)^{q(1+\epsilon)}\right\}^{\frac{1}{q(1+\epsilon)}} \notag\\
& \lesssim\sup_{\epsilon>0} \sup _{L \leqslant 0, L \in \mathbb{Z}} 2^{-L \lambda}\left\{\epsilon^\theta\sum_{k=-\infty}^{L} 2^{\eta (0) q(1+\epsilon) k}\left\|{\bf{1}}_{k}\right\|_{p(\cdot)}^{q(1+\epsilon)}\left(\sum_{i=k+2}^{\infty} 2^{-i n}\left\|\left(\sum_{j=1}^{\infty}\left|g_{j}\right|^{r}\right)^{\frac{1}{r}} {\bf{1}}_{i}\right\|_{p(\cdot)}\left\|{\bf{1}}_{i}\right\|_{L^{{p}(\cdot)}}\right)^{q(1+\epsilon)}\right\}^{\frac{1}{q(1+\epsilon)}}.\label{eq3.5}
\end{align}

Using Lemmas $2.5$ and $3.2$ again, we obtain

\begin{align}
2^{-i n}\left\|{\bf{1}}_{k}\right\|_{p(\cdot)}\left\|{\bf{1}}_{i}\right\|_{L^{p^{\prime}(\cdot)}} & \leqslant 2^{-i n}\left\|{\bf{1}}_{B_{k}}\right\|_{p(\cdot)}\left\|{\bf{1}}_{B_{i}}\right\|_{L^{p^{\prime}(\cdot)}} \notag\\
& \lesssim 2^{-i n}\left\|{\bf{1}}_{B_{k}}\right\|_{p(\cdot)}\left|B_{i}\right|\left\|{\bf{1}}_{B_{i}}\right\|_{p(\cdot)}^{-1}\notag \\
& \lesssim 2^{n \omega_{1}(k-i)} \label{eq3.6}. 
\end{align}

We put \eqref{eq3.6} into \eqref{eq3.5} and get

\begin{align}
E_{T}^{3} & \lesssim \sup_{\epsilon>0}\sup _{L \leqslant 0, L \in \mathbb{Z}} 2^{-L \lambda}\left\{\epsilon^\theta\sum_{k=-\infty}^{L} 2^{\eta (0) k q(1+\epsilon)}\left(\sum_{i=k+2}^{\infty}\left\|\left(\sum_{j=1}^{\infty}\left|g_{j}\right|^{r}\right)^{\frac{1}{r}} {\bf{1}}_{i}\right\|_{p(\cdot)} 2^{n \omega_{1}(k-i)}\right)^{q(1+\epsilon)}\right\}^{\frac{1}{q(1+\epsilon)}} \notag\\
& =\sup_{\epsilon>0}\sup _{L \leqslant 0, L \in \mathbb{Z}} 2^{-L \lambda}\left\{\epsilon^\theta\sum_{k=-\infty}^{L}\left(\sum_{i=k+2}^{\infty} 2^{\eta (0) k}\left\|\left(\sum_{j=1}^{\infty}\left|g_{j}\right|^{r}\right)^{\frac{1}{r}} {\bf{1}}_{i}\right\|_{p(\cdot)} 2^{n \omega_{1}(k-i)}\right)^{q(1+\epsilon)}\right\}^{\frac{1}{q(1+\epsilon)}} \notag\\
& =\sup_{\epsilon>0}\sup _{L \leqslant 0, L \in \mathbb{Z}} 2^{-L \lambda}\left\{\epsilon^\theta\sum_{k=-\infty}^{L}\left(\sum_{i=k+2}^{\infty} 2^{\eta (0) i}\left\|\left(\sum_{j=1}^{\infty}\left|g_{j}\right|^{r}\right)^{\frac{1}{r}} {\bf{1}}_{i}\right\|_{p(\cdot)} 2^{d(k-i)}\right)^{q(1+\epsilon)}\right\}^{\frac{1}{q(1+\epsilon)}}. \label{eq3.7}
\end{align}

where $d:=n \omega_{1}+\eta (0)>0$.

Let $1<q(1+\epsilon)<\infty$, then we get 

$$
\begin{aligned}
E_{T}^{3} & \lesssim  \sup_{\epsilon>0}\sup _{L \leqslant 0, L \in \mathbb{Z}} 2^{-L \lambda}\left\{\epsilon^\theta\sum_{k=-\infty}^{L}\left(\sum_{i=k+2}^{\infty} 2^{\eta (0) i q(1+\epsilon)}\left\|\left(\sum_{j=1}^{\infty}\left|g_{j}\right|^{r}\right)^{\frac{1}{r}} {\bf{1}}_{i}\right\|_{p(\cdot)}^{q(1+\epsilon)} 2^{\frac{d q(1+\epsilon)(k-i)}{2}}\right)\left(\sum_{i=k+2}^{\infty} 2^{\frac{d (q(1+\epsilon))^{\prime}(k-i)}{2}}\right)^{\frac{q(1+\epsilon)}{(q(1+\epsilon))^{\prime}}}\right\}^{\frac{1}{q(1+\epsilon)}} \\
& \lesssim \sup_{\epsilon>0}\sup _{L \leqslant 0, L \in \mathbb{Z}} 2^{-L \lambda}\left\{\epsilon^\theta\sum_{k=-\infty}^{L} \sum_{i=k+2}^{\infty} 2^{\eta (0) i q(1+\epsilon)}\left\|\left(\sum_{j=1}^{\infty}\left|g_{j}\right|^{r}\right)^{\frac{1}{r}} {\bf{1}}_{i}\right\|_{p(\cdot)}^{q(1+\epsilon)} 2^{\frac{d q(1+\epsilon)(k-i)}{2}}\right\} ^{\frac{1}{q(1+\epsilon)}} \\
= & \sup_{\epsilon>0}\sup _{L \leqslant 0, L \in \mathbb{Z}} 2^{-L \lambda}\left\{\epsilon^\theta\sum_{k=-\infty}^{L} \sum_{i=k+2}^{L+2} 2^{\eta (0) i q(1+\epsilon)}\left\|\left(\sum_{j=1}^{\infty}\left|g_{j}\right|^{r}\right)^{\frac{1}{r}} {\bf{1}}_{i}\right\|_{p(\cdot)}^{q(1+\epsilon)} 2^{\frac{d q(1+\epsilon)(k-i)}{2}}\right\} ^{\frac{1}{q(1+\epsilon)}} \\
& +\sup_{\epsilon>0}\sup _{L \leqslant 0, L \in \mathbb{Z}} 2^{-L \lambda}\left\{\epsilon^\theta\sum_{k=-\infty}^{L} \sum_{i=L+3}^{\infty} 2^{\eta (0) i q(1+\epsilon)}\left\|\left(\sum_{j=1}^{\infty}\left|g_{j}\right|^{r}\right)^{\frac{1}{r}} {\bf{1}}_{i}\right\|_{p(\cdot)}^{q(1+\epsilon)} 2^{\frac{d q(1+\epsilon)(k-i)}{2}}\right\} ^{\frac{1}{q(1+\epsilon)}} \\
=: & I_{3}+I_{4} .
\end{aligned}
$$

Now we consider $I_{3}$ and $I_{4}$ respectively. For $d>0$, we get

\begin{align*}
I_{3} & =\sup_{\epsilon>0}\sup _{L \leqslant 0, L \in \mathbb{Z}} 2^{-L \lambda}\left\{\epsilon^\theta\sum_{i=-\infty}^{L+2} 2^{\eta (0) i q(1+\epsilon)}\left\|\left(\sum_{j=1}^{\infty}\left|g_{j}\right|^{r}\right)^{\frac{1}{r}} {\bf{1}}_{i}\right\|_{p(\cdot)}^{q(1+\epsilon)} \sum_{k=-\infty}^{i-2} 2^{\frac{d q(1+\epsilon)(k-i)}{2}}\right\}^{\frac{1}{q(1+\epsilon)}} \\
& \lesssim \sup_{\epsilon>0} \sup _{L \leqslant 0, L \in \mathbb{Z}} 2^{-L \lambda}\left\{\epsilon^\theta\sum_{i=-\infty}^{L+2} 2^{\eta (0) i q(1+\epsilon)}\left\|\left(\sum_{j=1}^{\infty}\left|g_{j}\right|^{r}\right)^{\frac{1}{r}} {\bf{1}}_{i}\right\|_{p(\cdot)}^{q(1+\epsilon)}\right\}^{\frac{1}{q(1+\epsilon)}} \\
& \lesssim E_{f}.
\end{align*}

If $d>0$ and $\lambda-d / 2<0$, then we get

\begin{align*}
I_{4} & \lesssim \sup_{\epsilon>0} \sup _{L \leqslant 0, L \in \mathbb{Z}} 2^{-L \lambda}\left\{\epsilon^\theta\sum_{k=-\infty}^{L} \sum_{i=L+3}^{\infty} 2^{\frac{d q(1+\epsilon)(k-i)}{2}}\left[\sum_{m=-\infty}^{i} 2^{\eta (0) m q(1+\epsilon)}\left\|\left(\sum_{j=1}^{\infty}\left|g_{j}\right|^{r}\right)^{\frac{1}{r}} {\bf{1}}_{m}\right\|_{p(\cdot)}^{q(1+\epsilon)}\right]\right\}^{\frac{1}{q(1+\epsilon)}} \\
& \lesssim \sup_{\epsilon>0}\sup _{L \leqslant 0, L \in \mathbb{Z}} 2^{-L \lambda}\left\{\epsilon^\theta\sum_{k=-\infty}^{L} \sum_{i=L+3}^{\infty} 2^{\frac{d q(1+\epsilon)(k-i)}{2}} \cdot 2^{i q (1+\epsilon)\lambda} E_{f}^{q(1+\epsilon)}\right\}^{\frac{1}{q(1+\epsilon)}} \\
& = \sup_{\epsilon>0} \sup _{L \leqslant 0, L \in \mathbb{Z}} 2^{-L \lambda}\left\{\epsilon^\theta\left(\sum_{k=-\infty}^{L} 2^{d q (1+\epsilon)k / 2}\right)\left(\sum_{i=L+3}^{\infty} 2^{(\lambda-d / 2) q(1+\epsilon) i}\right) E_{f}^{q(1+\epsilon)}\right\}^{\frac{1}{q(1+\epsilon)}} \\
& \lesssim \sup_{\epsilon>0} \sup _{L \leqslant 0, L \in \mathbb{Z}}\left\{2^{-L q(1+\epsilon) \lambda} 2^{d q(1+\epsilon) L / 2} 2^{(\lambda-d / 2) q(1+\epsilon) L} E_{f}^{q(1+\epsilon)}\right\}^{\frac{1}{q(1+\epsilon)}} \\
& =E_{f}.
\end{align*}

Hence we get  $E_{T}^{3} \lesssim E_{f}$.

For $0<q (1+\epsilon)\leqslant 1$, then using \eqref{eq3.4} in \eqref{eq3.7} we get

\begin{align*}
E_{T}^{3} & \lesssim \sup_{\epsilon>0} \sup _{L \leqslant 0, L \in \mathbb{Z}} 2^{-L \lambda}\left\{\sum_{k=-\infty}^{L} \sum_{i=k+2}^{\infty} 2^{\eta (0) i q(1+\epsilon)}\left\|\left(\sum_{j=1}^{\infty}\left|g_{j}\right|^{r}\right)^{\frac{1}{r}} {\bf{1}}_{i}\right\|_{p(\cdot)}^{q(1+\epsilon)} 2^{d q(k-i)}\right\}^{\frac{1}{q(1+\epsilon)}} \\
& \lesssim \sup_{\epsilon>0} \sup _{L \leqslant 0, L \in \mathbb{Z}} 2^{-L \lambda}\left\{\sum_{k=-\infty}^{L} \sum_{i=k+2}^{L+2} 2^{\eta (0) i q(1+\epsilon)}\left\|\left(\sum_{j=1}^{\infty}\left|g_{j}\right|^{r}\right)^{\frac{1}{r}} {\bf{1}}_{i}\right\|_{p(\cdot)}^{q(1+\epsilon)} 2^{d q(1+\epsilon)(k-i)}\right\}^{\frac{1}{q(1+\epsilon)}} \\
& +\sup_{\epsilon>0} \sup _{L \leqslant 0, L \in \mathbb{Z}} 2^{-L \lambda}\left\{\sum_{k=-\infty}^{L} \sum_{i=L+3}^{\infty} 2^{\eta (0) i q(1+\epsilon)}\left\|\left(\sum_{j=1}^{\infty}\left|g_{j}\right|^{r}\right)^{\frac{1}{r}} {\bf{1}}_{i}\right\|_{p(\cdot)}^{q(1+\epsilon)} 2^{d q(1+\epsilon)(k-i)}\right\} ^{\frac{1}{q(1+\epsilon)}}\\
= & :J_{3}+J_{4}.
\end{align*}

 Similarly we  conclude that $E_{T}^{3} \lesssim E_{f}$ holds for $0<q (1+\epsilon)\leqslant 1$.

Then we consider $H_{T}^{3}$. Similarly, we have

\begin{equation}\label{eq3.8}
H_{T}^{3} \lesssim 2^{-L \lambda}\left\{\epsilon^\theta \sum_{k=0}^{L}\left(\sum_{i=k+2}^{\infty} 2^{\eta _{\infty} i}\left\|\left(\sum_{j=1}^{\infty}\left|g_{j}\right|^{r}\right)^{\frac{1}{r}} {\bf{1}}_{i}\right\|_{p(\cdot)} 2^{d_{1}(k-i)}\right)^{q(1+\epsilon)}\right\}^{\frac{1}{q(1+\epsilon)}} 
\end{equation}

here $d_{1}=n \omega_{1}+\eta _{\infty}>0$.

If $1<q(1+\epsilon)<\infty$, then  Hölder's inequality yields

\begin{align*}
H_{T}^{3} \lesssim & 2^{-L \lambda}\left\{\epsilon^\theta\sum_{k=0}^{L}\left(\sum_{i=k+2}^{\infty} 2^{\eta _{\infty} q(1+\epsilon) i}\left\|\left(\sum_{j=1}^{\infty}\left|g_{j}\right|^{r}\right)^{\frac{1}{r}} {\bf{1}}_{i}\right\|_{p(\cdot)}^{q(1+\epsilon)} 2^{\frac{d_{1} q(1+\epsilon)(k-i)}{2}}\right)\right. \\
& \left.\times\left(\sum_{i=k+2}^{\infty} 2^{\frac{d_{1} (q(1+\epsilon))^{\prime}(k-i)}{2}}\right)^{\frac{q(1+\epsilon)}{(q(1+\epsilon))^{\prime}}}\right\}^{\frac{1}{q(1+\epsilon)}} \\
\lesssim & 2^{-L \lambda}\left\{\epsilon^\theta\sum_{k=0}^{\frac{1}{q}} \sum_{i=k+2}^{\infty} 2^{\eta _{\infty} q (1+\epsilon)i}\left\|\left(\sum_{j=1}^{\infty}\left|g_{j}\right|^{r}\right)^{\frac{1}{r}} {\bf{1}}_{i}\right\|_{p(\cdot)}^{q(1+\epsilon)} 2^{\frac{d_{1} q(1+\epsilon)(k-i)}{2}}\right\}^{\frac{1}{q(1+\epsilon)}} \\
\leqslant & 2^{-L \lambda}\left\{\epsilon^\theta\sum_{k=0}^{L} \sum_{i=k+2}^{L+2} 2^{\eta _{\infty} q (1+\epsilon)i}\left\|\left(\sum_{j=1}^{\infty}\left|g_{j}\right|^{r}\right)^{\frac{1}{r}} {\bf{1}}_{i}\right\|_{p(\cdot)}^{q(1+\epsilon)} 2^{\frac{d_{1} q(1+\epsilon)(k-i)}{2}}\right\}^{\frac{1}{q(1+\epsilon)}} \\
& +2^{-L \lambda}\left\{\epsilon^\theta\sum_{k=0}^{L} \sum_{i=L+3}^{\infty} 2^{\eta _{\infty} q (1+\epsilon)i}\left\|\left(\sum_{j=1}^{\infty}\left|g_{j}\right|^{r}\right)^{\frac{1}{r}} {\bf{1}}_{i}\right\|_{p(\cdot)}^{q(1+\epsilon)} 2^{\frac{d_{1} q(1+\epsilon)(k-i)}{2}}\right\} ^{\frac{1}{q(1+\epsilon)}}\\
= &: I_{5}+I_{6} .
\end{align*}

Because $d_{1}>0$, we have

\begin{align*}
I_{5} & =2^{-L \lambda}\left\{\epsilon^\theta \sum_{i=2}^{L+2} 2^{\eta _{\infty} q (1+\epsilon)i}\left\|\left(\sum_{j=1}^{\infty}\left|g_{j}\right|^{r}\right)^{\frac{1}{r}} {\bf{1}}_{i}\right\|_{p(\cdot)}^{q(1+\epsilon)} \sum_{k=0}^{i-2} 2^{\frac{d_{1} q(k-i)}{2}}\right\}^{\frac{1}{q(1+\epsilon)}} \\
& \lesssim 2^{-L \lambda}\left\{\epsilon^\theta\sum_{i=2}^{L+2} 2^{\eta _{\infty} q (1+\epsilon)i}\left\|\left(\sum_{j=1}^{\infty}\left|g_{j}\right|^{r}\right)^{\frac{1}{r}} {\bf{1}}_{i}\right\|_{p(\cdot)}^{q(1+\epsilon)}\right\}^{\frac{1}{q(1+\epsilon)}} \\
& \lesssim H_{f}.
\end{align*}

Since $d_{1}>0$ and $\lambda-d_{1} / 2<0$, we obtain

\begin{align*}
I_{6}&=2^{-L \lambda}\left\{\epsilon^\theta\sum_{k=0}^{L} \sum_{i=L+3}^{\infty} 2^{\eta _{\infty} q(1+\epsilon) i}\left\|\left(\sum_{j=1}^{\infty}\left|g_{j}\right|^{r}\right)^{\frac{1}{r}} {\bf{1}}_{i}\right\|_{p(\cdot)}^{q(1+\epsilon)} 2^{\frac{d_{1} q(1+\epsilon)(k-i)}{2}}\right\}^{\frac{1}{q(1+\epsilon)}}\\
& \leqslant 2^{-L \lambda}\left\{\epsilon^\theta\left(\sum_{k=0}^{L} 2^{\frac{d_{1} q(1+\epsilon) k}{2}}\right)\left(\sum_{i=L+3}^{\infty} 2^{\left(\lambda-d_{1} / 2\right) q(1+\epsilon) i}\right)\right. \\
& \left.\quad \times\left[\sum_{m=0}^{i} 2^{\eta _{\infty} q(1+\epsilon) m}\left\|2^{-i \lambda}\left(\sum_{j=1}^{\infty}\left|g_{j}\right|^{r}\right)^{\frac{1}{r}} {\bf{1}}_{m}\right\|_{p(\cdot)}^{q(1+\epsilon)}\right]\right\} ^{\frac{1}{q(1+\epsilon)}}\\
& \lesssim 2^{-L \lambda} 2^{d_{1} / 2 L} 2^{\left(\lambda-d_{1} / 2\right) L} H_{f} \\
& =H_{f} .
\end{align*}

For $0<{q(1+\epsilon)}\leqslant 1$, we use \eqref{eq3.4} in \eqref{eq3.8} and have

\begin{align*}
H_{T}^{3} & \lesssim 2^{-L \lambda}\left\{\epsilon^\theta\sum_{k=0}^{L} \sum_{i=k+2}^{\infty} 2^{\eta _{\infty} q (1+\epsilon)i}\left\|\left(\sum_{j=1}^{\infty}\left|g_{j}\right|^{r}\right)^{\frac{1}{r}} {\bf{1}}_{i}\right\|_{p(\cdot)}^{q(1+\epsilon)} 2^{d_{1} q(1+\epsilon)(k-i)}\right\}^{\frac{1}{q(1+\epsilon)}} \\
& \leqslant 2^{-L \lambda}\left\{\epsilon^\theta\sum_{k=0}^{L} \sum_{i=k+2}^{L+2} 2^{\eta _{\infty} q (1+\epsilon)i}\left\|\left(\sum_{j=1}^{\infty}\left|g_{j}\right|^{r}\right)^{\frac{1}{r}} {\bf{1}}_{i}\right\|_{p(\cdot)}^{q(1+\epsilon)} 2^{d_{1} q(1+\epsilon)(k-i)}\right\}^{\frac{1}{q(1+\epsilon)}} \\
& +2^{-L \lambda}\left\{\epsilon^\theta\sum_{k=0}^{L} \sum_{i=L+3}^{\infty} 2^{\eta _{\infty} q (1+\epsilon)i}\left\|\left(\sum_{j=1}^{\infty}\left|g_{j}\right|^{r}\right)^{\frac{1}{r}} {\bf{1}}_{i}\right\|_{p(\cdot)}^{q(1+\epsilon)} 2^{d_{1} q(1+\epsilon)(k-i)}\right\}^{\frac{1}{q(1+\epsilon)}} \\
= & :I_{J}+J_{6} .
\end{align*}

Similarly we can get $H_{T}^{3} \lesssim H_{f}$ where $0<q(1+\epsilon) \leqslant 1$.

Hence we completes our proof.
\end{proof}

Now we turn to prove Theorem $2.13$. Because the proofs of $B$-parts and $F$-parts are similar, we only prove $F$-parts below. Our proof will use the idea that comes from \cite{ref36}. To continue, we recall some lemmas.

\begin{lem} \cite{ref31}

 Let $\mu, \nu \in \mathcal{S}\left(\mathbb{R}^{n}\right),-1 \leqslant M \in \mathbb{Z}$, 
$$
D^{\tau} \widehat{\mu}(0)=0 \quad \text { for all } \quad|\tau| \leqslant M .
$$

Then for any $N>0$ there is a constant $C_{N}$ such that

$$
\sup _{z \in \mathbb{R}^{n}}\left|\mu_{t} * \nu(z)\right|(1+|z|)^{N} \leqslant C_{N} t^{M+1},
$$

where $\mu_{t}(x)=t^{-n} \mu\left(\frac{x}{t}\right)$ for all $0<t \leqslant 2$.
\end{lem}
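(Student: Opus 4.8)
The plan is to reduce the statement to a Taylor expansion argument in which the vanishing moment hypothesis on $\mu$ cancels the leading polynomial terms, leaving a remainder of order $t^{M+1}$, and then to absorb the weight $(1+|z|)^N$ by trading it against the rapid decay of the Schwartz function $\nu$. First I would rescale: writing $\mu_t(y)=t^{-n}\mu(y/t)$ and substituting $y=tw$ gives
$$\mu_t * \nu(z) = \int_{\mathbb{R}^n} \mu(w)\,\nu(z - tw)\,dw.$$
Next I would record that the hypothesis $D^{\tau}\widehat{\mu}(0)=0$ for $|\tau|\leqslant M$ is equivalent, upon differentiating the Fourier integral under the integral sign, to the vanishing moment conditions $\int_{\mathbb{R}^n}\mu(w)\,w^{\tau}\,dw=0$ for all $|\tau|\leqslant M$ (the normalization constant in the Fourier transform is irrelevant here).

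Then I would Taylor-expand $\nu$ about $z$ to order $M$ with integral remainder,
$$\nu(z - tw) = \sum_{|\tau| \leqslant M} \frac{(-tw)^{\tau}}{\tau!} D^{\tau} \nu(z) + (M+1)\sum_{|\tau| = M+1} \frac{(-tw)^{\tau}}{\tau!}\int_0^1 (1-s)^M D^{\tau} \nu(z - stw)\,ds,$$
insert it into the integral, and integrate against $\mu(w)$. Every term of the finite sum carries a factor $\int \mu(w)w^{\tau}\,dw$ with $|\tau|\leqslant M$, which vanishes by the moment condition, so only the remainder survives:
$$\mu_t * \nu(z) = (M+1)\,t^{M+1}\sum_{|\tau|=M+1}\frac{(-1)^{M+1}}{\tau!}\int_{\mathbb{R}^n}\mu(w)\,w^{\tau}\int_0^1 (1-s)^M D^{\tau}\nu(z - stw)\,ds\,dw.$$
This already isolates the decisive factor $t^{M+1}$, and it remains only to bound the double integral uniformly in $z$ after multiplication by $(1+|z|)^N$.

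This last estimate is the crux. I would use that $\nu\in\mathcal{S}(\mathbb{R}^n)$, so $|D^{\tau}\nu(x)|\leqslant C_{N'}(1+|x|)^{-N'}$ for every $N'$, together with the elementary submultiplicative inequality $1+|z|\leqslant (1+|z-stw|)(1+st|w|)\leqslant (1+|z-stw|)(1+2|w|)$, valid for $0\leqslant s\leqslant 1$ and $0<t\leqslant 2$. Choosing $N'\geqslant N$ yields $(1+|z|)^N|D^{\tau}\nu(z-stw)|\leqslant C_{N'}(1+2|w|)^N$ uniformly in $z$ and $s$; substituting this back removes all $z$-dependence and leaves the finite integral $\int|\mu(w)|\,|w|^{M+1}(1+2|w|)^N\,dw$, convergent because $\mu$ is Schwartz. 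The degenerate case $M=-1$ (no moment condition, $t^{M+1}=1$) follows from the same weight estimate applied directly to $\int\mu(w)\nu(z-tw)\,dw$. I expect the weight-absorption step — retaining the gain $t^{M+1}$ while defeating the growth $(1+|z|)^N$ through the Peetre-type splitting — to be the only genuinely delicate point, the rest being bookkeeping once the moment cancellation is in place.
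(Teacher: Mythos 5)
Your proof is correct. The paper offers no proof of this lemma --- it is quoted verbatim from Rychkov \cite{ref31} --- and your argument (rescaling the convolution, converting $D^{\tau}\widehat{\mu}(0)=0$ into vanishing moments, Taylor-expanding $\nu$ to order $M$ so that only the integral remainder of size $t^{M+1}$ survives, and then absorbing the weight $(1+|z|)^{N}$ via the Peetre inequality $1+|z|\leqslant(1+|z-stw|)(1+2|w|)$ against the Schwartz decay of $D^{\tau}\nu$) is precisely the standard argument behind the cited result, including the correct separate treatment of the degenerate case $M=-1$ and uniformity of the constant in $t\in(0,2]$.
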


\begin{lem}\cite{ref31}
 {\rm    If $\omega >0$ and  $q \in(0, \infty]$. Then for a sequence $\left\{g_{j}\right\}_{0}^{\infty}$, we have
$$
G_{j}=\sum_{\ell=0}^{\infty} 2^{-|\ell-j| \omega } g_{\ell}.
$$
Then
\begin{equation}\label{eq5}
\left\|\left\{G_{j}\right\}_{0}^{\infty}\right\|_{\ell_{q}} \leqslant C\left\|\left\{g_{j}\right\}_{0}^{\infty}\right\|_{\ell_{q}} .
\end{equation}

}
\end{lem}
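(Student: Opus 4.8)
The plan is to recognize Lemma 3.4 as a discrete convolution inequality with a summable kernel and to verify it separately on the three regimes $q=\infty$, $1<q<\infty$, and $0<q\leq 1$. First I would record the elementary fact that, since $\omega>0$, the two-sided geometric series
$$
A_\sigma:=\sum_{m=-\infty}^{\infty}2^{-|m|\sigma\omega}=1+\frac{2\cdot 2^{-\sigma\omega}}{1-2^{-\sigma\omega}}<\infty
$$
converges for every $\sigma>0$; in particular $\sum_{\ell=0}^{\infty}2^{-|j-\ell|\omega}\leq A_1$ uniformly in $j\geq 0$ and $\sum_{j=0}^{\infty}2^{-|j-\ell|\omega}\leq A_1$ uniformly in $\ell\geq 0$. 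The common starting point in all cases is the pointwise bound $|G_j|\leq\sum_{\ell=0}^{\infty}2^{-|j-\ell|\omega}|g_\ell|$, which allows me to work with $|g_\ell|$ throughout.

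The case $q=\infty$ is immediate: $|G_j|\leq\|\{g_\ell\}\|_{\ell_\infty}\sum_{\ell=0}^{\infty}2^{-|j-\ell|\omega}\leq A_1\|\{g_\ell\}\|_{\ell_\infty}$, and taking the supremum over $j$ yields the claim with $C=A_1$. For $1<q<\infty$ (writing $q'$ for the conjugate exponent) I would split the kernel as $2^{-|j-\ell|\omega}=2^{-|j-\ell|\omega/q'}\cdot 2^{-|j-\ell|\omega/q}$ and apply Hölder's inequality in the $\ell$-summation, which gives
$$
|G_j|^q\leq A_1^{q/q'}\sum_{\ell=0}^{\infty}2^{-|j-\ell|\omega}|g_\ell|^q .
$$
Summing over $j$ and interchanging the order of summation (all terms being nonnegative) then produces $\sum_{j}|G_j|^q\leq A_1^{q/q'}\cdot A_1\sum_{\ell}|g_\ell|^q=A_1^{q}\|\{g_\ell\}\|_{\ell_q}^q$, since $\tfrac{1}{q'}+\tfrac{1}{q}=1$, so again $C=A_1$. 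This is just the discrete Young inequality made explicit.

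The genuinely separate regime is $0<q\leq 1$, where the triangle inequality in $\ell_q$ fails and one must instead invoke the $q$-subadditivity $\bigl|\sum_{\ell}b_\ell\bigr|^q\leq\sum_{\ell}|b_\ell|^q$. Applied to the pointwise bound this gives $|G_j|^q\leq\sum_{\ell=0}^{\infty}2^{-|j-\ell|\omega q}|g_\ell|^q$, and because $\omega q>0$ the inner kernel remains summable with total $A_q$; interchanging the sums yields $\sum_{j}|G_j|^q\leq A_q\sum_{\ell}|g_\ell|^q$, hence $C=A_q^{1/q}$. The $q$-subadditivity step, together with the accompanying observation that $\omega q>0$ keeps the kernel summable, is the only point demanding care, since everything else reduces to bookkeeping with the geometric series $A_\sigma$; I would therefore single out the small-$q$ regime as the place where the argument genuinely departs from the Banach-space computation.
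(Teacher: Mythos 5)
Your proof is correct and complete: the three-case treatment ($q=\infty$ by direct summation, $1<q<\infty$ by the kernel splitting $2^{-|j-\ell|\omega}=2^{-|j-\ell|\omega/q'}\cdot 2^{-|j-\ell|\omega/q}$ plus H\"older, and $0<q\leq 1$ by $q$-subadditivity with the summable kernel $2^{-|j-\ell|\omega q}$) is the standard discrete Young-inequality argument, and the bookkeeping with $A_\sigma$ and the exponent identity $q/q'+1=q$ is accurate. The paper itself offers no proof of this lemma --- it is quoted verbatim from Rychkov \cite{ref31} --- so there is nothing to compare against; your argument supplies exactly the routine verification the paper omits.
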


\begin{lem}

If $\kappa , q \in(0, \infty], \omega >0$, $ s \in \mathbb{R}$, and $\eta , q,p $, are same as given in Theorem  $2.8$.  For a sequence $\left\{g_{j}\right\}_{0}^{\infty}$, we have

$$
G_{j}(x)=\sum_{k=0}^{\infty} 2^{-|k-j| \omega} g_{k}(x), \quad x \in \mathbb{R}^{n}. 
$$

Then there are some constants $C_{1}=C_{1}(q, \omega)$ and $C_{2}=C_{2}(p(\cdot), q, \omega)$ such that

\begin{equation}\label{eq3.10}
\left\|\left\{G_{j}\right\}_{j=0}^{\infty}\right\|_{{M\dot{K}_{\lambda, p(\cdot)}^{\eta (\cdot), q), \theta}}\left(\ell_{\kappa }\right)} \leqslant C_{1}\left\|\left\{g_{j}\right\}_{j=0}^{\infty}\right\|_{{M\dot{K}_{\lambda, p(\cdot)}^{\eta (\cdot), q), \theta}}\left(\ell_{\kappa }\right)}
\end{equation}

and

\begin{equation}\label{eq3.11}
\left\|\left\{G_{j}\right\}_{j=0}^{\infty}\right\|_{\ell_{\kappa }\left({M\dot{K}_{\lambda, p(\cdot)}^{\eta (\cdot), q), \theta}}\right)} \leqslant C_{2}\left\|\left\{g_{j}\right\}_{j=0}^{\infty}\right\|_{\ell_{\kappa }\left({M\dot{K}_{\lambda, p(\cdot)}^{\eta (\cdot), q), \theta}}\right)} .\tag{3.11}
\end{equation}
\end{lem}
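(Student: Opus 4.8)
The plan is to deduce both \eqref{eq3.10} and \eqref{eq3.11} from the scalar convolution inequality \eqref{eq5}, transferring it to the two mixed quasi-norms. For brevity write $\|\cdot\|:=\|\cdot\|_{M\dot K_{\lambda,p(\cdot)}^{\eta(\cdot),q),\theta}}$. The guiding observation is that the weights $2^{-|k-j|\omega}$ act only on the summation index, so the scalar estimate can be pushed either \emph{inside} the quasi-norm, yielding the $M\dot K(\ell_\kappa)$-bound \eqref{eq3.10}, or applied to the numerical sequence of quasi-norms, yielding the $\ell_\kappa(M\dot K)$-bound \eqref{eq3.11}.

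First I would prove \eqref{eq3.10}. For every fixed $x\in\mathbb{R}^n$ the triangle inequality in the defining sum gives $|G_j(x)|\le\sum_{k=0}^\infty 2^{-|k-j|\omega}|g_k(x)|$. Applying \eqref{eq5} with $q=\kappa$ to the nonnegative scalar sequence $\{|g_k(x)|\}_{k\ge0}$, whose implied constant depends only on $\kappa$ and $\omega$ and hence is independent of $x$, produces the pointwise bound
\[
\left(\sum_{j=0}^\infty|G_j(x)|^\kappa\right)^{1/\kappa}\le C\left(\sum_{k=0}^\infty|g_k(x)|^\kappa\right)^{1/\kappa},\qquad x\in\mathbb{R}^n.
\]
Since $\|\cdot\|$ is monotone and positively homogeneous — being assembled from the monotone homogeneous norm $\|\cdot\|_{p(\cdot)}$ by forming weighted $\ell^{q(1+\epsilon)}$-sums and suprema over $\epsilon$ and $L$ — taking $\|\cdot\|$ on both sides of this pointwise inequality gives \eqref{eq3.10} with $C_1=C$.

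Next I would establish \eqref{eq3.11} at the level of the numerical sequence $a_k:=\|g_k\|$. Set $\rho:=\min(1,q)\in(0,1]$. Because $p(\cdot)\in\mathcal B(\mathbb{R}^n)$ forces $p_-\ge1$, the inner norm $\|\cdot\|_{p(\cdot)}$ is subadditive, and for each $\epsilon>0$ the $\ell^{q(1+\epsilon)}$-structure obeys the $\min(1,q(1+\epsilon))$-triangle inequality; taking suprema over $\epsilon$ and $L$ then shows that $\|\cdot\|$ is $\rho$-subadditive. Combined with homogeneity this yields
\[
\|G_j\|^{\rho}\le\sum_{k=0}^\infty 2^{-|k-j|\omega\rho}\,a_k^{\rho}.
\]
Writing $b_k:=a_k^{\rho}$ and $\widehat G_j:=\sum_{k=0}^\infty 2^{-|k-j|\omega\rho}b_k$, I would apply \eqref{eq5} with exponent $\kappa/\rho\in(0,\infty]$ and decay $\omega\rho>0$ to obtain $\|\{\widehat G_j\}\|_{\ell_{\kappa/\rho}}\le C\|\{b_k\}\|_{\ell_{\kappa/\rho}}$. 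Since $\|G_j\|^{\kappa}\le\widehat G_j^{\kappa/\rho}$ and $b_k^{\kappa/\rho}=a_k^{\kappa}$, this reads $\sum_j\|G_j\|^{\kappa}\le C^{\kappa/\rho}\sum_k a_k^{\kappa}$; raising to the power $1/\kappa$ gives \eqref{eq3.11} with $C_2=C^{1/\rho}$.

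The genuinely delicate point is the $\rho$-subadditivity invoked for \eqref{eq3.11}: since $q<1$ is allowed, $\|\cdot\|$ is in general only a quasi-norm, and one must verify that a \emph{single} exponent $\rho=\min(1,q)$ works uniformly before the suprema over $\epsilon$ and $L$ are taken. This is precisely where the grand parameter $\epsilon$ demands care, as $q(1+\epsilon)$ runs over $[q,\infty)$; the resolution is to note that an $\ell^{q(1+\epsilon)}$-quasi-norm satisfies the $\min(1,q(1+\epsilon))$-triangle inequality, that $\min(1,q(1+\epsilon))\ge\rho$ for every $\epsilon>0$, and that any functional obeying the $\tau$-triangle inequality also obeys the $\rho$-triangle inequality whenever $\rho\le\tau$. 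Estimate \eqref{eq3.10}, by contrast, is purely pointwise and requires only monotonicity, no subadditivity, so I expect it to be the easier half and the subadditivity bookkeeping in \eqref{eq3.11} to be the main obstacle.
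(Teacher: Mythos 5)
Your proof is correct and follows essentially the same route as the paper: \eqref{eq3.10} is the pointwise application of the scalar convolution inequality \eqref{eq5} followed by monotonicity of the quasi-norm, and \eqref{eq3.11} reduces to the same scalar inequality (with rescaled exponent) after passing to a power of the quasi-norm for which countable subadditivity holds. The only difference is in packaging: where you verify the $\rho$-triangle inequality with $\rho=\min(1,q)$ directly (using $p_-\geq 1$, which the hypotheses inherited from Theorem 2.8 do guarantee), the paper uses the rescaling identity $\|G_j\|^{p_0}=\bigl\||G_j|^{p_0}\bigr\|$ taken in the space $M\dot{K}_{p_0\lambda,\,p(\cdot)/p_0}^{p_0\eta(\cdot),\,q/p_0),\,p_0\theta}$ with $p_0<\min(p_-,q)$, a formulation that also covers exponents with $p_-<1$ (the paper proves the case for $p(\cdot)\in\mathcal{P}^0(\mathbb{R}^n)$), which is relevant when the lemma is later invoked for rescaled parameters in the proof of Theorem 2.13.
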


\begin{proof}

 Firstly, \eqref{eq3.10} follows immediately from Lemma $3.3$. Next we prove \eqref{eq3.11} for $p(\cdot) \in \mathcal{P}^{0}\left(\mathbb{R}^{n}\right)$ and we separate it into two cases.

Case 1. $p_{-} \geqslant 1, q \geqslant 1$. Because $\|\cdot\|_{M\dot{K}_{\lambda, p(\cdot)}^{\eta (\cdot), q), \theta}}$ is a norm, we have

$$
\left\|G_{j}\right\|_{M\dot{K}_{\lambda, p(\cdot)}^{\eta (\cdot), q), \theta}} \leqslant \sum_{k=0}^{\infty} 2^{-|k-j| \omega}\left\|g_{k}\right\|_{M\dot{K}_{\lambda, p(\cdot)}^{\eta (\cdot), q), \theta}}.
$$

Using Lemma $3.4$, we get \eqref{eq3.11}.

Case $2$. If $q<1$, let $p_{0}<\min \left(p_{-}, q\right)$ then we get

\begin{align*}
\left\|G_{j}\right\|_{M\dot{K}_{\lambda, p(\cdot)}^{\eta (\cdot), q), \theta}}^{p_{0}} & =\left\|\left|G_{j}\right|^{p_{0}}\right\|_{M\dot{K}_{p_0\lambda, p(\cdot)/p_0}^{p_0\eta (\cdot),\left. q/p_0\right), p_0\theta}} \\
& \leqslant\left\|\sum_{k=0}^{\infty} 2^{-|k-j| p_{0} \omega}\left|g_{k}\right|^{p_{0}}\right\|_{M\dot{K}_{p_0\lambda, p(\cdot)/p_0}^{p_0\eta (\cdot),\left. q/p_0\right), p_0\theta}} \\
& \leqslant \sum_{k=0}^{\infty} 2^{-|k-j| p_{0} \omega}\left\|\left|g_{k}\right|^{p_{0}}\right\|_{M\dot{K}_{p_0\lambda, p(\cdot)/p_0}^{p_0\eta (\cdot),\left. q/p_0\right), p_0\theta}} .
\end{align*}

Consequently we get

\begin{align*}
 \left\|\left\{G_{j}\right\}\right\|_{\ell_{\kappa }\left({M\dot{K}_{\lambda, p(\cdot)}^{\eta (\cdot), q), \theta}}\right)}^{p_{0}}&=\left\|\left\{\left|G_{j}\right|^{p_{0}}\right\}\right\|_{\ell_{\kappa  / p_{0}}\left({M\dot{K}_{p_0\lambda, p(\cdot)/p_0}^{p_0\eta (\cdot),\left. q/p_0\right), p_0\theta}}\right)} \\
& \lesssim\left\|\left\{\left|g_{k}\right|^{p_{0}}\right\}\right\|_{\ell_{\kappa  / p_{0}}\left({M\dot{K}_{p_0\lambda, p(\cdot)/p_0}^{p_0\eta (\cdot),\left. q/p_0\right), p_0\theta}}\right)} \\
& =\left\|\left\{g_{k}\right\}\right\|_{\ell_{\kappa }\left({M\dot{K}_{\lambda, p(\cdot)}^{\eta (\cdot), q), \theta}}\right)}^{p_{0}}.
\end{align*}

By using the power $1 / p_{0}$, we get  \eqref{eq3.11}.
\end{proof}

\begin{lem}[\cite{ref9}, Theorem 6 ]
  {\rm  Let $\left\{\varphi_{j}\right\}_{j \in \mathbb{N}_{0}}$ is the resolution of unity,  $R \in \mathbb{N}$. Then there exists functions $\theta_{0}, \theta \in \mathcal{S}\left(\mathbb{R}^n\right)$ which satisfy  
$$
\begin{gathered}
\operatorname{supp} \theta, \operatorname{supp} \theta_{0} \subseteq\left\{y \in \mathbb{R}^n:|y| \leqslant 1\right\}, \\
\left|\widehat{\theta}_{0}(\varrho)\right|>0 \quad \text { on }\{|\varrho|<2 \varepsilon\}, \\
|\widehat{\theta}(\varrho)|>0 \quad \text { on }\left\{\frac{\varepsilon}{2}<|\varrho|<2 \varepsilon\right\}, \\
\int_{\mathbb{R}^n} y^{\gamma} \theta(x) \mathrm{d} y=0, \quad \forall \gamma, 0<|\gamma| \leqslant R,
\end{gathered}
$$
such that
$$
\widehat{\theta}_{0}(\varrho) \widehat{\varPsi}_{0}(\varrho)+\sum_{j=1}^{\infty} \widehat{\theta}\left(p^{-j} \varrho\right) \widehat{\varPsi}\left(p^{-j} \varrho\right)=1, \quad \forall \varrho \in \mathbb{R}^n,
$$
and  $\varPsi_{0}, \varPsi \in \mathcal{S}\left(\mathbb{R}^n\right)$ are given as 
$$
\widehat{\varPsi}_{0}(\varrho)=\frac{\varphi_{0}(\varrho)}{\widehat{\theta}_{0}(\varrho)}, \quad \widehat{\varPsi}(\varrho)=\frac{\varphi_{1}(2 \varrho)}{\widehat{\theta}(\varrho)}.
$$}
\end{lem}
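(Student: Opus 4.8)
The plan is to treat the asserted reproducing identity as an essentially formal consequence of the resolution of unity, so that the real content is the explicit construction of the pair $\theta_0,\theta$. First I would observe that the two defining relations $\widehat{\varPsi}_0=\varphi_0/\widehat{\theta}_0$ and $\widehat{\varPsi}=\varphi_1(2\,\cdot)/\widehat{\theta}$ give, after the dyadic substitution $\varrho\mapsto 2^{-j}\varrho$,
$$
\widehat{\theta}_0(\varrho)\widehat{\varPsi}_0(\varrho)=\varphi_0(\varrho),\qquad \widehat{\theta}(2^{-j}\varrho)\widehat{\varPsi}(2^{-j}\varrho)=\varphi_1\!\left(2^{1-j}\varrho\right)=\varphi\!\left(2^{-j}\varrho\right)=\varphi_j(\varrho),
$$
where I used $\varphi_1(y)=\varphi(2^{-1}y)$ and $\varphi_j(y)=\varphi(2^{-j}y)$. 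Summing over $j$ and invoking $\sum_{j=0}^{\infty}\varphi_j\equiv 1$ yields the claimed identity for every $\varrho\in\mathbb{R}^n$. Thus, once $\theta_0,\theta$ are produced with the stated support, non-vanishing and moment properties, and once $\varPsi_0,\varPsi$ are seen to lie in $\mathcal{S}(\mathbb{R}^n)$, nothing further is needed.

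For $\theta_0$ I would start from a nonnegative even $\psi\in C_c^\infty$ with $\int\psi\neq 0$ and set $\theta_0=\psi*\tilde{\psi}$, where $\tilde{\psi}(x)=\psi(-x)$; then $\widehat{\theta}_0=|\widehat{\psi}|^2\geq 0$ with $\widehat{\theta}_0(0)=(\int\psi)^2>0$. Since the Fourier transform of a compactly supported function is continuous (indeed real-analytic), $\widehat{\theta}_0$ is strictly positive near the origin, and by replacing $\psi$ with a suitable dilate $\psi(\cdot/\delta)$, which only shrinks the support, I can force $\widehat{\theta}_0>0$ on the whole ball $\{|\varrho|<2\varepsilon\}$ while keeping $\operatorname{supp}\theta_0\subseteq\{|y|\leq 1\}$. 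Then $\widehat{\varPsi}_0=\varphi_0/\widehat{\theta}_0$ is smooth and compactly supported, the denominator being nonzero on $\operatorname{supp}\varphi_0$, hence $\varPsi_0\in\mathcal{S}(\mathbb{R}^n)$.

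The one genuinely substantive point is to build $\theta$ carrying $R$ vanishing moments while keeping $\widehat{\theta}$ nonzero on the annulus. Here I would use the Laplacian trick: fix $M$ with $2M-1\geq R$ and set $\theta=\Delta^{M}\zeta$ for a bump $\zeta$ of the same type as above, so that $\operatorname{supp}\theta\subseteq\operatorname{supp}\zeta\subseteq\{|y|\leq 1\}$ and $\widehat{\theta}(\varrho)=(-|\varrho|^2)^{M}\widehat{\zeta}(\varrho)$. The factor $(-|\varrho|^2)^M$ forces $\widehat{\theta}$ to vanish to order $2M$ at the origin, i.e. $D^\gamma\widehat{\theta}(0)=0$ for $0<|\gamma|\leq R$, which is exactly $\int y^{\gamma}\theta(y)\,dy=0$ for $0<|\gamma|\leq R$; and since $|\varrho|^{2M}\neq 0$ off the origin, $\widehat{\theta}$ is nonzero on $\{\varepsilon/2<|\varrho|<2\varepsilon\}$ as soon as $\widehat{\zeta}$ is, which I arrange exactly as for $\theta_0$. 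Finally $\widehat{\varPsi}=\varphi_1(2\,\cdot)/\widehat{\theta}=\varphi/\widehat{\theta}$ is supported in the annulus where $\widehat{\theta}\neq 0$ and vanishes identically on a neighborhood of the origin, because $\varphi$ does, so the quotient is smooth and compactly supported and $\varPsi\in\mathcal{S}(\mathbb{R}^n)$.

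The main obstacle I expect is precisely this last construction: one must reconcile the vanishing-moment requirement, a high-order zero of $\widehat{\theta}$ at the origin, with the non-vanishing of $\widehat{\theta}$ on the annulus and with compact support in physical space. The iterated-Laplacian device resolves all three simultaneously, since it neither enlarges the support nor introduces zeros away from the origin, and the order of the central zero is tuned by $M$. The remaining verifications, namely that the quotients $\varphi_0/\widehat{\theta}_0$ and $\varphi/\widehat{\theta}$ define Schwartz functions, are routine, using that each numerator is smooth and compactly supported while each denominator is nonvanishing on the support of its numerator.
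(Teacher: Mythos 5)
The paper does not prove this lemma at all: it is imported verbatim as Theorem~6 of the cited Frazier--Jawerth reference, so there is no internal proof to compare against. Your argument is correct and is essentially the standard construction used in that source (and in Rychkov's follow-up): reduce the reproducing identity to the telescoping resolution of unity via the division formulas, build $\widehat{\theta}_0=|\widehat{\psi}|^2$ from an autocorrelation of a shrunken bump, and obtain the vanishing moments without destroying compact support or introducing new zeros by taking $\theta=\Delta^M\zeta$ with $2M-1\geq R$.

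Two small points worth making explicit if you write this up. First, the identity $\widehat{\theta}(\sigma)\widehat{\varPsi}(\sigma)=\varphi(\sigma)$ holds \emph{everywhere} only because $\widehat{\varPsi}$ is defined as the quotient on a neighbourhood of $\operatorname{supp}\varphi$ and extended by zero, and because $\varphi$ vanishes wherever $\widehat{\theta}$ does; so the nonvanishing of $\widehat{\theta}_0$ and $\widehat{\theta}$ must be arranged on (neighbourhoods of) $\operatorname{supp}\varphi_0=\{|\varrho|\leq 2\}$ and $\operatorname{supp}\varphi=\{1/2\leq|\varrho|\leq 2\}$, not merely on the $\varepsilon$-ball and $\varepsilon$-annulus of the Tauberian conditions --- your dilation argument does give this, since it yields positivity of $\widehat{\psi}(\delta\cdot)$ on any fixed compact set, but the statement ``$>0$ on $\{|\varrho|<2\varepsilon\}$'' alone would not suffice unless $\varepsilon\geq 1$. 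Second, the ``$p^{-j}$'' in the displayed identity of the lemma is a typo for $2^{-j}$, which is the reading your computation correctly uses.
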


\begin{proof}
Now we will give the proof of Theorem $2.13$.

{\bf{Step 1.}} Let $g\in  \mathcal{S}^{\prime}\left(\mathbb{R}^{n}\right)$, then 

$$
\|g\|_{M\dot{K}_{\lambda, p(\cdot)}^{\eta (\cdot), q), \theta} F_{\kappa }^{s}}^{(2)} \lesssim\|g\|_{M\dot{K}_{\lambda, p(\cdot)}^{\eta (\cdot), q), \theta}F_{\kappa }^{s}}^{(1)} \lesssim\|g\|_{M\dot{K}_{\lambda, p(\cdot)}^{\eta (\cdot), q), \theta}F_{\kappa }^{s}}^{(2)}.
$$

Using Lemmas $3.2$ and $3.5$, and the fact $r<\min \left\{p_-, \kappa \right\}$ for $N \in \mathbb{N}$. Then for $g\in  \mathcal{S}\left(\mathbb{R}^n\right)$,  then

\begin{align*}
\left(\int_{1}^{2}\left|2^{l s}\left(\Phi_{2^{-l} t}^{*} g\right)_{a}(x)\right|^{\kappa } \frac{\mathrm{d} t}{t}\right)^{r / \kappa } \lesssim & \sum_{k \in l+\mathbb{N}_{0}} 2^{(l-k)(N r-n+r s)} 2^{k r s} \\
& \times \mathcal{M}\left[\left(\int_{1}^{2}\left|\left(\left(\Phi_{k}\right)_{t} *g\right)(\cdot)\right|^{\kappa  \frac{\mathrm{d} t}{t}}\right)^{r / \kappa }\right](x).
\end{align*}

If $l \in \mathbb{N}$, $p_{0}=r \in\left(n / a,<\min \left\{p_{-}, \kappa \right\}\right), N>\max \{0,-s\}+a$ and $\omega:=$ $N+s-d / r>0$, then we get 

$$
\begin{aligned}
& \left(\int_{1}^{2}\left|2^{l s}\left(\Phi_{2^{-l}}^{*} g\right)_{a}(x)\right|^{\kappa } \frac{\mathrm{d} t}{t}\right)^{r / \kappa } \\
& \quad \lesssim \sum_{k \in l+\mathbb{N}_{0}} 2^{-\omega r|l-k|} 2^{k r s} \mathcal{M}\left[\left(\int_{1}^{2}\left|\left(\left(\Phi_{k}\right)_{t} *g\right)(\cdot)\right|^{\kappa } \frac{\mathrm{d} t}{t}\right)^{r / \kappa }\right](x) .
\end{aligned}
$$

Using Lemma $3.5$ in ${M\dot{K}_{r\lambda, p(\cdot)/r}^{r\eta (\cdot),\left. q/r\right), r\theta}}\left(\ell_{\kappa  / r}\right)$, we obtain

\begin{align*}
& \left\|\left\{\left(\int_{1}^{2}\left|2^{l s}\left(\Phi_{2-l_{t}}^{*} g\right)_{a}(x)\right|^{\kappa } \frac{\mathrm{d} t}{t}\right)^{r / \kappa }\right\}_{l \in \mathbb{N}}\right\|_{M\dot{K}_{r\lambda, p(\cdot)/r}^{r\eta (\cdot),\left. q/r\right), r\theta}\left(\ell_{\kappa  / r}\right)} \\
& \quad \lesssim\left\|\left\{\mathcal{M}\left[\left(\int_{1}^{2}\left|2^{k s}\left(\left(\Phi_{l}\right)_{t} *g\right)(\cdot)\right|^{\kappa } \frac{\mathrm{d} t}{t}\right)^{r / \kappa }\right]\right\}_{l \in \mathbb{N}}\right\|_{M\dot{K}_{r\lambda, p(\cdot)/r}^{r\eta (\cdot),\left. q/r\right), r\theta}\left(\ell_{\kappa  / r}\right)}
\end{align*}

Theorem $2.8$ yields

\begin{align*}
& \left\|\left\{\left(\int_{1}^{2}\left|2^{l s}\left(\Phi_{2^{-l}}^{*} g\right)_{a}(x)\right|^{\kappa } \frac{\mathrm{d} t}{t}\right)^{r / \kappa }\right\}_{l \in \mathbb{N}}\right\|_{M\dot{K}_{r\lambda, p(\cdot)/r}^{r\eta (\cdot),\left. q/r\right), r\theta}\left(\ell_{\kappa  / r}\right)}\\
& \lesssim\left\|\left\{\left(\int_{1}^{2}\left|2^{k s}\left(\left(\Phi_{l}\right)_{t} *g\right)(\cdot)\right|^{\kappa } \frac{\mathrm{d} t}{t}\right)^{r / \kappa }\right\}_{l \in \mathbb{N}}\right\|_{M\dot{K}_{r\lambda, p(\cdot)/r}^{r\eta (\cdot),\left. q/r\right), r\theta}\left(\ell_{\kappa  / r}\right)} \\
& =\left\|\left\{\left(\int_{1}^{2}\left|2^{k s}\left(\left(\Phi_{l}\right)_{t} *g\right)(\cdot)\right|^{\kappa } \frac{\mathrm{d} t}{t}\right)^{1 / \kappa }\right\}_{l \in \mathbb{N}}\right\|_{M\dot{K}_{\lambda, p(\cdot)}^{\eta (\cdot), q), \theta}\left(\ell_{\kappa }\right)}^{r}.
\end{align*}

Hence, we have

\begin{align*}
 &\left\| \left(\int_{0}^{1}\left|\lambda^{-s}\left(\Phi_{\lambda}^{*} g\right)_{a}(\cdot)\right|^{\kappa } \frac{d \lambda}{\lambda}\right)^{1 / \kappa } \right\|_{M\dot{K}_{\lambda, p(\cdot)}^{\eta (\cdot), q), \theta}} \\
& \approx\left\|\left(\sum_{l=1}^{\infty} \int_{1}^{2}\left|2^{l s}\left(\Phi_{2-l_{t}}^{*} g\right)_{a}(\cdot)\right|^{\kappa } \frac{\mathrm{d} t}{t}\right)^{1 / \kappa }\right\|_{M\dot{K}_{\lambda, p(\cdot)}^{\eta (\cdot), q), \theta}} \\
& \lesssim\left\|\left\{\left(\int_{1}^{2}\left|2^{l s} \Phi_{2^{-l} {t}} *g(\cdot)\right|^{\kappa } \frac{\mathrm{d} t}{t}\right)^{1 / \kappa }\right\}_{l \in \mathbb{N}}\right\|_{M\dot{K}_{\lambda, p(\cdot)}^{\eta (\cdot), q), \theta}\left(\ell_{\kappa }\right)} \\
& \approx\left\|\left(\int_{0}^{1}\left|\lambda^{-s} \Phi_{\lambda} *g(\cdot)\right|^{\kappa } \frac{d \lambda}{\lambda}\right)^{1 / \kappa }\right\|_{M\dot{K}_{\lambda, p(\cdot)}^{\eta (\cdot), q), \theta}}.
\end{align*}

This proves $\|g\|_{M\dot{K}_{\lambda, p(\cdot)}^{\eta (\cdot), q), \theta} F_{\kappa }^{s}}^{(2)} \lesssim\|g\|_{M\dot{K}_{\lambda, p(\cdot)}^{\eta (\cdot), q), \theta} F_{\kappa }^{s}}^{(1)}$.

{\bf{Step 2.}} Suppose that $\Psi_{0}, \Psi \in \mathcal{S}^{\prime}\left(\mathbb{R}^{n}\right)$ and  $g\in  \mathcal{S}^{\prime}\left(\mathbb{R}^{n}\right)$.

\begin{equation}\label{eq3.12}
\|g\|_{M\dot{K}_{\lambda, p(\cdot)}^{\eta (\cdot), q), \theta} F_{\kappa }^{s}\left(\mathbb{R}^{n}, \Psi\right)}^{(4)} \lesssim\|g\|_{M\dot{K}_{\lambda, p(\cdot)}^{\eta (\cdot), q), \theta} F_{\kappa }^{s}\left(\mathbb{R}^{n}, \Phi\right)}^{(2)}. 
\end{equation}

By applying  Lemmas $3.5$ and $3.2$, and the fact  $\omega =\min \{1, S+1-s\}$,then for  $g \in \mathcal{S}$, we get

\begin{equation}\label{eq3.13}
2^{l s}\left(\Psi_{l}^{*} g\right)_{a}(x) \leqslant C \sum_{k \in \mathbb{N}_{0}} 2^{-|k-l| \omega} 2^{k s}\left(\Phi_{2-{ }^{k} t}^{*} g\right)_{a}(x), x \in \mathbb{R}^{n} \text { and } t \in[1,2] .
\end{equation}

If $\kappa  \geqslant 1$. By using the  $\left(\int_{1}^{2}|\cdot|^{\kappa } \mathrm{d} t / t\right)^{1 / \kappa }$, we get 

$$
2^{l s}\left(\Psi_{l}^{*} g\right)_{a}(x) \lesssim \sum_{k \in \mathbb{N}_{0}} 2^{-|k-l| \omega} 2^{k s}\left(\int_{1}^{2}\left|\left(\Phi_{2^{-k} t}^{*} g\right)_{a}(x)\right|^{\kappa } \frac{\mathrm{d} t}{t}\right)^{1 / \kappa }.
$$

Applying Lemma $3.5$, we obtain

$$
\left\|\left\{2^{l s}\left(\Psi_{l}^{*} g\right)_{a}\right\}_{l \in \mathbb{N}}\right\|_{M\dot{K}_{\lambda, p(\cdot)}^{\eta (\cdot), q), \theta}\left(\ell_{\kappa }\right)} \lesssim\left\|\left(\sum_{k=1}^{\infty} 2^{k s \kappa } \int_{1}^{2}\left|\left(\Phi_{2^{-k} t}^{*} g\right)_{a}(x)\right|^{\kappa } \frac{\mathrm{d} t}{t}\right)^{1 / \kappa }\right\|_{M\dot{K}_{\lambda, p(\cdot)}^{\eta (\cdot), q), \theta}}.
$$

Hence we obtain the required result.

If $\kappa <1$, then  $\left(\int_{1}^{2}|\cdot|^{\kappa } \mathrm{d} t / t\right)^{1 / \kappa }$ is not the norm. Thus we get 

$$
\left(2^{l s}\left(\Psi_{l}^{*} g\right)_{a}(x)\right)^{\kappa } \lesssim \sum_{k \in \mathbb{N}_{0}} 2^{-\kappa |k-l| \omega} 2^{k s \kappa } \int_{1}^{2}\left|\left(\Phi_{2^{-k} t}^{*} g\right)_{a}(x)\right|^{\kappa } \frac{\mathrm{d} t}{t}.
$$

Convolution $(\gamma * \eta )_{\ell}$ of the sequences yields

$$
\gamma_{k}=2^{-|k| \omega \kappa } \quad \text { and } \quad \tau_{k}=2^{k s \kappa } \int_{1}^{2}\left|\left(\Phi_{2-k t}^{*} g\right)_{a}(x)\right|^{\kappa } \frac{\mathrm{d} t}{t}
$$

For $x \in \mathbb{R}^{n}$, using $\ell_{1}$-norm  gives as
$$
\begin{aligned}
\left\|2^{l s}\left(\Psi_{l}^{*} g\right)_{a}(x)\right\|_{\ell_{\kappa }}^{\kappa } & \leqslant\|\gamma\|_{\ell_{1}} \cdot\|\tau\|_{\ell_{1}} \\
& \lesssim \sum_{k=1}^{\infty} 2^{k s \kappa } \int_{1}^{2}\left|\left(\Phi_{2^{-k} t}^{*} g\right)_{a}(x)\right|^{\kappa } \frac{\mathrm{d} t}{t} .
\end{aligned}
$$

By taking  $(\cdots)^{1 / \kappa }$ and using ${\dot{K} ^{\eta (\cdot), q),\theta}_{p(\cdot)}}$-norm. We obtain desired result \eqref{eq3.12}.

Similarly, for any $g\in  \mathcal{S}^{\prime}\left(\mathbb{R}^{n}\right)$, we obtain

$$
\|g\|_{M\dot{K}_{\lambda, p(\cdot)}^{\eta (\cdot), q), \theta} F_{\kappa }^{s}\left(\mathbb{R}^{n}, \Phi\right)}^{(2)} \lesssim\|g\|_{M\dot{K}_{\lambda, p(\cdot)}^{\eta (\cdot), q), \theta} F_{\kappa }^{s}\left(\mathbb{R}^{n}, \Psi\right)}^{(4)}.
$$

{\bf{Step 3.}} Using $t=1$ in {\bf{Step 1}}, we get 

$$
\|g\|_{M\dot{K}_{\lambda, p(\cdot)}^{\eta (\cdot), q), \theta} F_{\kappa }^{s}}^{(5)} \lesssim\|g\|_{M\dot{K}_{\lambda, p(\cdot)}^{\eta (\cdot), q), \theta}F_{\kappa }^{s}}^{(4)} \lesssim\|g\|_{M\dot{K}_{\lambda, p(\cdot)}^{\eta (\cdot), q), \theta}F_{\kappa }^{s}}^{(5)}.
$$

{\bf{Step 4.}} We show \eqref{eq2.10} is equivalent to the rest.

First, we will show that for any $g\in  \mathcal{S}^{\prime}\left(\mathbb{R}^{n}\right)$

\begin{equation}\label{eq3.14}
\|g\|_{M\dot{K}_{\lambda, p(\cdot)}^{\eta (\cdot), q), \theta} F_{\kappa }^{s}\left(\mathbb{R}^{n}\right)}^{(2)} \lesssim\|g\|_{M\dot{K}_{\lambda, p(\cdot)}^{\eta (\cdot), q), \theta} F_{\kappa }^{s}}^{(3)} .
\end{equation}

For $0<r<\min \left\{p_{-}, \kappa \right\}$, see \cite{ref36}, there exists a positive constant $C$ such that for any $g\in  \mathcal{S}^{\prime}\left(\mathbb{R}^{n}\right)$,

\begin{align*}
& \left(\int_{1}^{2}\left|\left(\Psi_{2^{-l}}^{*} g\right)_{a}(x)\right|^{\kappa } \frac{\mathrm{d} t}{t}\right)^{r / \kappa } \\
& \quad \leqslant C \sum_{k \in \mathbb{N}_{0}} 2^{-k N s} 2^{(k+l) n} \int_{\mathbb{R}^{n}} \frac{\left(\int_{1}^{2} \int_{|z|<2^{-(k+l)} t}\left|\left(\left(\Phi_{k+l}\right)_{t} *g\right)(z+y)\right|^{\kappa } \mathrm{d} z \frac{\mathrm{d} t}{t^{n+1}}\right)^{r / \kappa }}{\left(1+2^{l}|x-y|\right)^{a r}} \mathrm{~d} y.
\end{align*}

Let $a r>n$, we get 

$$
g_{l}(y):=\frac{2^{n l}}{\left(1+2^{l}|y|\right)^{a r}}, \forall y \in \mathbb{R}^{n}.
$$

Hence we get

\begin{align*}
& \left(\int_{1}^{2}\left|2^{l s}\left(\Phi_{2^{-l} l_{t}}^{*} g\right)_{a}(x)\right|^{\kappa } \frac{\mathrm{d} t}{t}\right)^{r / \kappa } \\
& \lesssim \sum_{k \in \mathbb{N}_{0}} 2^{-k N r} 2^{k n} 2^{l s r}\left[g_{l} *\left(\int_{1}^{2} \int_{|z|<2^{-(k+l) t}}\left|\left(\left(\Phi_{k+l}\right)_{t} *g\right)(z+\cdot)\right|^{\kappa } \mathrm{d} z \frac{\mathrm{d} t}{t^{n+1}}\right)^{r / \kappa }\right](x).
\end{align*}

By applying the majorant property  see \cite{ref34} to obtain

\begin{align*}
& \left(\int_{1}^{2}\left|2^{l s}\left(\Phi_{2^{-l} t}^{*} g\right)_{a}(x)\right|^{\kappa } \frac{\mathrm{d} t}{t}\right)^{r / \kappa } \\
& \lesssim \sum_{k \in \mathbb{N}_{0}} 2^{l s r} 2^{k(-N r+n)} \mathcal{M}\left[\left(\int_{1}^{2} \int_{|z|<2^{-(k+l) t}}\left|\left(\left(\Phi_{k+l}\right)_{t} *g\right)(z+\cdot)\right|^{\kappa } \mathrm{d} z \frac{\mathrm{d} t}{t^{n+1}}\right)^{r / \kappa }\right](x).
\end{align*}

An index shift on the right-hand side gives

\begin{align*}
& \left(\int_{1}^{2}\left|2^{l s}\left(\Phi_{2^{-l}}^{*} g\right)_{a}(x)\right|^{\kappa } \frac{\mathrm{d} t}{t}\right)^{r / \kappa } \\
& \lesssim \sum_{k \in l+\mathbb{N}_{0}} 2^{l s r} 2^{(k-l)(-N r+n)} \mathcal{M}\left[\left(\int_{1}^{2} \int_{|z|<2^{-k} t}\left|\left(\left(\Phi_{k}\right)_{t} *g\right)(z+\cdot)\right|^{\kappa } \mathrm{d} z \frac{\mathrm{d} t}{t^{n+1}}\right)^{r / \kappa }\right](x) \\
& =\sum_{k \in l+\mathbb{N}_{0}} 2^{(l-k)(N r-n+r s)} 2^{k r s} \mathcal{M}\left[\left(\int_{1}^{2} \int_{|z|<2^{-k t}}\left|\left(\left(\Phi_{k}\right)_{t} *g\right)(z+\cdot)\right|^{\kappa } \mathrm{d} z \frac{\mathrm{d} t}{t^{n+1}}\right)^{r / \kappa }\right](x).
\end{align*}

It is simple to note that $\|g\|_{M\dot{K}_{\lambda, p(\cdot)}^{\eta (\cdot), q), \theta} F_{\kappa }^{s}}^{(3)} \lesssim\|g\|_{M\dot{K}_{\lambda, p(\cdot)}^{\eta (\cdot), q), \theta}  F_{\kappa }^{s}}^{(2)}$, since for any $t>0$

$$
\frac{1}{t^{n}} \int_{|z|<t}\left|\left(\Phi_{t} *g\right)(x+z)\right| \mathrm{d} z \lesssim \sup _{|z|<t} \frac{\left|\left(\Phi_{t} *g\right)(x+z)\right|}{(1+1 / t|z|)^{a}} \lesssim\left(\Phi_{t}^{*} g\right)_{a}(x).
$$

Hence we  completes the proof.
\end{proof}

\section{Ethics declarations}

	\section*{Conflict of interest}
	
	The authors declare that they have no known competing financial interests or personal relationships that could have appeared to influence the work reported in this paper.

 \section*{Ethics approval and consent to participate}
This manuscript has not and will not be submitted to more than one journal for simultaneous consideration. The submitted work is original and will not be published elsewhere.

	\section*{Funding }
	Authors state no funding involved.
	
	\section*{Availability of data  }
	No data is available for this study.

\bibliographystyle{elsarticle-num}

\end{document}